\newtheorem{lemma}{Lemma}[section]
\newtheorem{theorem}[lemma]{Theorem}
\newtheorem{corollary}[lemma]{Corollary}
\newtheorem{proposition}[lemma]{Proposition}
\theoremstyle{definition}
\newtheorem{definition}[lemma]{Definition}
\newtheorem{question}[lemma]{Question}
\newtheorem{conjecture}[lemma]{Conjecture}
\theoremstyle{remark}
\newtheorem{remark}[lemma]{Remark}
\numberwithin{table}{section}
\newcommand{\PP}{\mathbb P}
\newcommand{\ZZ}{\mathbb Z}
\newcommand{\QQ}{\mathbb Q}
\newcommand{\CC}{\mathbb C}
\newcommand{\RR}{\mathbb R}
\newcommand{\frq}{\mathfrak q}
\newcommand{\frp}{\mathfrak p}
\newcommand{\cS}{\mathcal S}
\newcommand{\cH}{\mathcal H}
\newcommand{\cI}{\mathcal I}
\newcommand{\cO}{\mathcal O}
\newcommand{\cL}{\mathcal L}
\newcommand{\ba}{\mathbf a}
\newcommand{\bb}{\mathbf b}
\newcommand{\bc}{\mathbf c}
\newcommand{\bd}{\mathbf d}
\newcommand{\bu}{\mathbf u}
\newcommand{\bv}{\mathbf v}
\newcommand{\bw}{\mathbf w}
\newcommand{\bx}{\mathbf x}
\newcommand{\by}{\mathbf y}
\newcommand{\eps}{\varepsilon}
\DeclareMathOperator{\codim}{codim}
\DeclareMathOperator{\GL}{GL}
\DeclareMathOperator{\Proj}{Proj}
\DeclareMathOperator{\reg}{reg}
\newcommand{\lra}{\DOTSB\longrightarrow}
\newcommand{\embed}{\hookrightarrow}
\newcommand{\abs}[1]{\lvert#1\rvert}
\newcommand{\llb}{\llbracket}
\newcommand{\rrb}{\rrbracket}
 \numberwithin{equation}{section}
\begin{document}
%%%%%%%%%%%%%%%%%%%%%%%%%%%%%%%%%%%%%%%%%%%%%%%%%%%
\title[Projective toric varieties of codimension 2 with maximal regularity]{Projective toric varieties of codimension 2 with maximal Castelnuovo--Mumford regularity}

\author{Preston Cranford}
\address{Massachusetts Institute of Technology, Cambridge, MA 02139, USA}
\email{prestonc@mit.edu}

\author{Alan Peng}
\address{Massachusetts Institute of Technology, Cambridge, MA 02139, USA}
\email{apeng1@mit.edu}

\author{Vijay Srinivasan}
\address{Massachusetts Institute of Technology, Cambridge, MA 02139, USA}
\email{vijayrs@mit.edu}

\date{June 2021}
\subjclass[2020]{}

\begin{abstract}
The Eisenbud--Goto conjecture states that $\operatorname{reg} X\le\operatorname{deg} X -\operatorname{codim} X+1$ for a nondegenerate irreducible projective variety $X$ over an algebraically closed field. While this conjecture is known to be false in general, it has been proven in several special cases, including when $X$ is a projective toric variety of codimension $2$. We classify the projective toric varieties of codimension $2$ having maximal regularity, that is, for which equality holds in the Eisenbud--Goto bound. We also give combinatorial characterizations of the arithmetically Cohen--Macaulay toric varieties of maximal regularity in characteristic $0$.
\end{abstract}

\maketitle

%%%%%%%%%%%%%%%%%%%%%%%%%%%%%%%%
\section{Introduction}
%%%%%%%%%%%%%%%%%%%%%%%%%%%%%%%%
\label{sec:introduction}
Let $\Bbbk$ be a field. Recall that for a closed subscheme $X\embed \PP^n_\Bbbk$ with ideal sheaf $\cI_X$, the \textit{Castelnuovo--Mumford regularity} of $X$ is defined as $$\reg X \coloneqq \max\{r > 0\mid H^i(\PP^n_\Bbbk, \cI_X(r-i)) = 0 \text{ for all }i>0\}.$$ Equivalently, $\reg X$ may be defined in terms of the minimal free resolution of $\cI_X$, viewed as an $\cO_{\PP^n}$-module (see $\S2$ for this definition). In \cite{EG}, Eisenbud and Goto made the following conjecture.

\begin{conjecture}\label{EGconj}
If $\Bbbk$ is an algebraically closed field and $X\embed \PP^n_\Bbbk$ is a nondegenerate irreducible projective variety, then
\[
\reg X\leq\deg X-\codim X+1.
\]
\end{conjecture}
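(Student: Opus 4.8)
The plan is to prove the inequality by induction on $d \coloneqq \dim X$, taking the curve case as the base and reducing the higher-dimensional cases to it by general hyperplane sections. The base case $d = 1$ is precisely the theorem of Gruson, Lazarsfeld, and Peskine: for a nondegenerate integral curve $C \embed \PP^m_\Bbbk$ one has $\reg C \le \deg C - (m-1) + 1 = \deg C - \codim C + 1$. For the inductive step I would intersect $X$ with a general hyperplane $H \embed \PP^n_\Bbbk$ and set $X' \coloneqq X \cap H \embed H \cong \PP^{n-1}_\Bbbk$. A Bertini argument (in characteristic $0$, or under the appropriate genericity in general) shows that for generic $H$ the section $X'$ is again irreducible and nondegenerate, of dimension $d - 1$. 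This reduction is well-adapted to the target because both $\codim X' = \codim X$ and $\deg X' = \deg X$, so the quantity $\deg X - \codim X + 1$ is an invariant of the induction; the inductive hypothesis therefore gives $\reg X' \le \deg X - \codim X + 1$.

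Everything then rests on comparing $\reg X$ with $\reg X'$. I would extract this from the short exact sequence of ideal sheaves
\[
0 \lra \cI_X(-1) \lra \cI_X \lra \cI_{X'} \lra 0,
\]
whose long exact sequence in cohomology, after twisting by $\cO(m)$, relates $H^i(\PP^n_\Bbbk, \cI_X(m))$ to the cohomology of $\cI_{X'}$ on $H$ through multiplication by the linear form cutting out $H$. One direction is immediate and classical: chasing the sequence shows that if $X$ is $r$-regular then so is $X'$, i.e.\ $\reg X' \le \reg X$. What the induction actually needs is the reverse estimate $\reg X \le \reg X'$, and this is the delicate point. Descending induction on the twist shows that $\reg X \le \reg X'$ holds as soon as multiplication by the linear form is injective on each intermediate cohomology group $H^i(\PP^n_\Bbbk, \cI_X(m))$ for $0 < i \le d$ --- equivalently, as soon as the deficiency modules $H^i_\frm(R/I_X)$ vanish in that range, where $R = \Bbbk[x_0,\ldots,x_n]$ and $\frm$ is its irrelevant ideal. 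In particular, when $X$ is arithmetically Cohen--Macaulay these modules vanish and $\reg X = \reg X'$, so the induction closes without correction.

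The main obstacle is exactly the control of this regularity jump in the non-Cohen--Macaulay case. The failure of injectivity of multiplication by the linear form is measured precisely by the graded pieces of the deficiency modules $H^i_\frm(R/I_X)$ for $0 < i \le d$, so a proof of the full bound must bound the top nonvanishing degrees of these modules by $\deg X - \codim X + 1$ using nothing beyond the degree, codimension, and irreducibility of $X$. The approach I would pursue is to control the Hilbert function of $X$ quantitatively from its degree, and, in characteristic $0$, to invoke the uniform position principle for the general hyperplane section, feeding these estimates back through the long exact sequence to absorb the correction terms coming from the deficiency modules. Making such a bound uniform across all $i$ and genuinely independent of the embedding is the hard technical core of the statement, and it is here --- and not in the comparatively formal hyperplane-section machinery --- that any proof must do its real work.
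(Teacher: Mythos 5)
There is a fundamental problem here that precedes any assessment of your technique: the statement you are trying to prove is \cref{EGconj}, a \emph{conjecture}, and the paper does not prove it --- it cannot, because the paper itself records (citing \cite{MP}) that McCullough and Peeva disproved it in 2018 by explicit counterexample. No proof of the general statement exists or can exist. The paper only ever works inside special classes where the bound is known to hold (curves via \cite{GLP}, arithmetically Cohen--Macaulay varieties via \cite{EG}, codimension-$2$ toric varieties via \cite{PS}), and its actual contribution is to classify when \emph{equality} holds in those classes, not to establish the inequality.

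Your proposal, read on its own terms, is also not a proof but an outline with the decisive step left open, and that step is exactly where the conjecture fails. The hyperplane-section formalism is fine: for general $H$ the section $X' = X \cap H$ is irreducible and nondegenerate with the same degree and codimension, and $\reg X' \le \reg X$ always holds. But the induction needs $\reg X \le \reg X'$, which as you note requires injectivity of multiplication by the linear form on the intermediate cohomology $H^i(\PP^n_\Bbbk, \cI_X(m))$, i.e.\ control of the deficiency modules $H^i_\frm(R/I_X)$. You propose to bound their top degrees by $\deg X - \codim X + 1$ ``using nothing beyond the degree, codimension, and irreducibility of $X$.'' This is precisely what the McCullough--Peeva counterexamples rule out: their Rees-like algebra constructions yield nondegenerate irreducible varieties whose regularity is not bounded by \emph{any} polynomial function of the degree, so the regularity jump concentrated in these modules cannot be absorbed by any bound of the proposed shape. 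The cases where your strategy does close --- the base case (curves, by \cite{GLP}) and the arithmetically Cohen--Macaulay case (where the deficiency modules vanish and $\reg X = \reg X'$) --- are exactly the special cases already known, and they are the most your approach can deliver.
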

In 2018, Peeva and McCullough proved that this conjecture is false in general by exhibiting an explicit counterexample \cite{MP}.

While not true in full generality, \cref{EGconj} has been proven true in several special cases. For example, in \cite{EG}, Eisenbud and Goto proved this bound holds if $X$ is arithmetically Cohen--Macaulay. In \cite{GLP}, \cref{EGconj} is proven in the case where $X$ is a projective curve, using algebro-geometric methods. However, it is still open whether \cref{EGconj} is true for general \textit{toric varieties}, which are the subject of focus of the present paper. In this direction, \cite{HH} resolved the case where the coordinate ring of $X$ is a simplicial semigroup ring with isolated singularity, and \cite{nitsche12} resolved the case where the coordinate ring is a seminormal simplicial affine semigroup ring. A combinatorial proof of \cref{EGconj} in the case of monomial curves was given in \cite{nitsche14}. In \cite{PS}, a combinatorial proof was given for toric varieties of codimension $2$.

In the cases of (nondegenerate, irreducible) curves, arithmetically Cohen--Macaulay varieties, and toric varieties of codimension $2$ (each a large class of varieties for which \cref{EGconj} is known to hold), we will use the phrase \emph{having maximal regularity} to refer to varieties $X$ which achieve equality in \cref{EGconj}, i.e. such that $\reg X = \deg X - \codim X + 1$. The main question motivating the results of this paper is the following.

\begin{question}
Which toric varieties have maximal regularity (in the cases where the Eisenbud--Goto bound is known to hold)?
\end{question}

We work over an arbitrary field $\Bbbk$. Let $S$ denote the coordinate ring of $\PP^{n-1}_\Bbbk$. To a lattice $\cL \subseteq \ZZ^n$ of rank $r$ perpendicular to $(1,\dots,1)\in\ZZ^n$, there is an associated homogeneous \textit{lattice ideal} $I_\cL\subseteq S$ and a closed projective scheme $X_\cL\coloneqq \Proj S/I_\cL\embed \PP^{n-1}_\Bbbk$. Also associated to $\cL$ is its \textit{Gale diagram}, which is a certain collection of $n$ vectors in $\ZZ^r$ that is defined uniquely up to the action of $\GL_r(\ZZ)$. When $\cL$ is \textit{saturated} (that is, $\QQ\cL\cap\ZZ^n=\cL$), then $I_\cL$ is prime and is a \textit{toric ideal}, and $X_\cL$ is a (projective) \textit{toric variety}. The $1$-dimensional toric varieties are precisely the monomial curves.

Our main result addresses the general case when $\codim X_\cL=2$, and is stated below. Recall that a nonzero lattice point $\bu\in\ZZ^2$ is \textit{visible} if its coordinates are relatively prime.

\begin{theorem}\label{thm:main}
Suppose $X_\cL$ is a nondegenerate toric variety of codimension $2$. Let $n'\ge 3$ be the number of nonzero Gale vectors of $\cL$. Then $X_\cL$ has maximal regularity (i.e., achieves equality in the Eisenbud--Goto conjecture) if and only if $\cL$ satisfies one of the following:
\begin{itemize}
    \item After removing Gale vectors equal to $0$, the lattice $\cL$ is one of the $14$ saturated lattices given in \cref{ci-lattice-ideals}, up to permutations of the coordinates. In this case, $4\le n'\le 8$, and $X_\cL$ is a complete intersection.
    
    \item $n'=4$, and $\cL$ has a Gale diagram whose nonzero vectors are equal to
    \[\{(1,0),(-1,1),(-1,-d+1),(1,d-2)\}\]
    for some $d\ge 3$. In this case, $X_\cL$ is not a complete intersection, and $X_\cL$ is arithmetically Cohen--Macaulay if and only if $d=3$.
    
    \item $n'=5$, and the Gale diagram of $\cL$, ignoring zeros, is of the form
    \[\{\bu,\bv,\bw,-\bu,-\bv-\bw\},\]
    where $\bu,\bv,\bw$ are visible lattice points that are pairwise linearly independent and are such that $\bu\neq\pm(\bv+\bw)$ and $\abs{\det(\bv,\bw)}=1$. In this case, $X_\cL$ is not a complete intersection.
    
    \item $n'=6$, and the Gale diagram of $\cL$, ignoring zeros, is of the form
    \[\{\bu,\bv,\bw,-\bu,-\bv,-\bw\},\]
    where $\bu,\bv,\bw$ are visible lattice points that are pairwise linearly independent and are such that $\abs{\det(\bv,\bw)}=1$. In this case, $X_\cL$ is not a complete intersection.
\end{itemize}
\end{theorem}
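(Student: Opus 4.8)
The plan is to convert the equality $\reg X_\cL=\deg X_\cL-1$ (note $\codim X_\cL=2$, so the Eisenbud--Goto bound reads $\reg\le\deg-1$, already known to hold by \cite{PS}) into a condition on the Gale diagram, and then to classify its solutions. First I would dispose of the zero Gale vectors. If $\bv_j=0$ then every element of $\cL$ vanishes in the $j$-th coordinate, so the variable $x_j$ occurs in no binomial of $I_\cL$; hence $X_\cL$ is a projective cone over the variety defined by the remaining coordinates, and passing to a cone leaves $\deg$, $\codim$, and $\reg$ unchanged. Thus it suffices to treat the $n'$ nonzero Gale vectors $\bv_1,\dots,\bv_{n'}\in\ZZ^2\setminus\{0\}$, which satisfy $\sum_i\bv_i=0$ because $\cL\perp(1,\dots,1)$.

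Next I would make both invariants combinatorial. Because $\codim X_\cL=2$, the minimal free resolution of $S/I_\cL$ has length $2$ when $X_\cL$ is arithmetically Cohen--Macaulay (the Hilbert--Burch case) and length $3$ otherwise; for codimension-$2$ lattice ideals this resolution is described explicitly from the Gale diagram. Sorting $\bv_1,\dots,\bv_{n'}$ by angle exhibits the minimal binomial generators and their syzygies, and hence all graded Betti numbers $\beta_{i,j}$ of $S/I_\cL$. From these I obtain $\reg X_\cL=1+\max\{\,j-i:\beta_{i,j}\neq0\,\}$, and, writing $K(t)=\sum_{i,j}(-1)^i\beta_{i,j}t^{j}$ for the Hilbert-series numerator, the degree $\deg X_\cL=\tfrac12K''(1)$ (using that $K$ vanishes to order $2$ at $t=1$). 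Both sides of the target equality are thereby functions of the sorted Gale diagram, with the Cohen--Macaulay and non-Cohen--Macaulay cases recorded separately.

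I would then split on $\beta_1$, the number of minimal generators of $I_\cL$. If $\beta_1=2$, then $X_\cL$ is a complete intersection of binomials of degrees $a\le b$, so $\deg X_\cL=ab$ and $\reg X_\cL=a+b-1$; the equality $a+b-1=ab-1$ forces $(a-1)(b-1)=1$, i.e.\ $a=b=2$. The problem then reduces to the finite enumeration of all saturated rank-$2$ lattices whose ideal is generated by two quadratic binomials, up to permutation of coordinates, which I expect to return exactly the $14$ types of \cref{ci-lattice-ideals}, all with $4\le n'\le8$. If $\beta_1\ge3$, then $X_\cL$ is not a complete intersection, and I would recast $\reg X_\cL=\deg X_\cL-1$ as a tight relation among the angular gaps of the Gale configuration. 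Estimating the two sides should show this relation can hold only for small $n'$; a direct analysis of the configurations with $n'=4,5,6$ ought to produce precisely the three listed families, while $n'\ge7$ forces the strict inequality $\reg X_\cL<\deg X_\cL-1$.

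The main obstacle is the non-complete-intersection analysis, and inside it the non-Cohen--Macaulay cases, where the resolution has length $3$ and the regularity may be controlled by the final syzygy module: tracking the top graded shift there while simultaneously pinning down $\tfrac12K''(1)$ is delicate. The hardest single point will be exhaustiveness---showing that the listed Gale diagrams are the \emph{only} equality cases for every $n'$. I anticipate an extremal argument, proving that any departure from the prescribed forms (for instance $\abs{\det(\bv,\bw)}>1$, or the presence of an additional pair of Gale directions) strictly lowers $\reg X_\cL$ relative to $\deg X_\cL-1$, complemented by the direct verification that each family in the statement does attain equality.
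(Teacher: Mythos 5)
Your treatment of the complete intersection case is correct and is essentially the paper's own argument: the Koszul computation forcing two quadratic generators (\cref{prop:general-ci-ideals}) followed by a finite search yielding the $14$ saturated lattices of \cref{ci-lattice-ideals}. The reduction of zero Gale vectors to cones also matches \cref{sec:preliminaries}. But for everything else --- the bulk of the theorem --- your proposal is a hope rather than a proof, and the hoped-for argument is not the one that works. You say you would ``recast $\reg X_\cL=\deg X_\cL-1$ as a tight relation among the angular gaps'' and that an ``extremal argument'' should rule out deviations; note, however, that the equality cases for $n'=4,5,6$ are \emph{infinite} families of unbounded degree and regularity (e.g.\ $\reg=d-1$ for all $d\ge 3$ when $n'=4$), so no finite enumeration or soft estimate can settle them, and the claim that sorting Gale vectors by angle ``exhibits'' all graded Betti numbers overstates what the Peeva--Sturmfels theory provides: what it gives is that third syzygies correspond to syzygy quadrangles (primitive parallelograms supported by Gale vectors, \cref{prop:PS-syz-quads}) and that regularity is governed by their total degrees. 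Converting that into a classification is precisely the missing content.

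Concretely, two substantial inputs are absent. First, for $n'=4$ (monomial curves in $\PP^3$) the paper does not argue combinatorially at all: it invokes the characterization of monomial curves of maximal regularity via \cite{GLP} (smoothness is necessary once $d\ge n+1$) and the sharp bounds of \cite{HHS}, yielding \cref{prop:toric-curves-iff} and \cref{cor:toric-n=4-iff}; your plan has no substitute for these external results. Second, for $n'\ge 5$ the paper's engine is the reduction datum $(\cL,G,Q)$ collapsing $\cL$ to a lattice $\cL_Q\subseteq\ZZ^4$, together with the chain $\reg I_\cL\le\reg I_Q\le\deg I_Q\le\deg I_\cL$ of \eqref{reductionchain}; maximal regularity forces equality to fail in exactly one link, and the whole of \cref{sec:analyzingreduction} (notably \cref{cor:eqreg}, \cref{lem:galediagrams}, \cref{prop:regdeg}, the $\{i,j\}$-simpleness criterion of \cref{prop:deg-1-iff}, and \cref{newsyzquadunitsquare}) is devoted to classifying when each link is tight. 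This is what produces the $n'=5,6$ families, proves nonexistence for $n'\ge 7$, and cannot be replaced by the unspecified extremal argument you anticipate. (You would additionally need the Cohen--Macaulay non-complete-intersection analysis via Hilbert--Burch and the multiplicity bound of \cite{MNR}, i.e.\ \cref{prop:codim-2-CMnonCI-iff,prop:codim-2-CMnonCI-list}, to handle $d=3$ and to rule out $n'=3$.)
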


Note that in the third case, the conditions given on the Gale diagram are indeed $\GL_2(\ZZ)$-invariant. Moreover, the set of saturated lattices given in \cref{ci-lattice-ideals} is independent of the field $\Bbbk$. 

%%%New stuff
\begin{remark}
\label{rem:explicitdegrees}
A projective toric subvariety of $\PP_{\Bbbk}^{n-1}$ of codimension $r$ may also be specified by a linear map $A : \ZZ^n\to \ZZ^{n-r}$; we set $X_A \coloneqq X_\cL$ where $\cL = \ker A$. For simplicity assume $n=n'$, i.e. that all Gale vectors are nonzero. Then the toric varieties of \cref{thm:main} which are not complete intersections can be described as the varieties $X_A$ where $A$ takes one of the following forms (for $n=4,5,6$, respectively):
\[
\begin{bmatrix} 
1 & 1 & 1 & 1\\ 
0 & 1 & d-1 & d 
\end{bmatrix},
\qquad 
\begin{bmatrix} 
1 & 0 & 0 & 1 & 0\\ 
0 & 1 & 1 & 0 & 1\\
1 & b & c & 0 & 0
\end{bmatrix},
\qquad
\begin{bmatrix} 
1 & 0 & 0 & 1 & 0 & 0\\ 
0 & 1 & 0 & 0 & 1 & 0\\
0 & 0 & 1 & 0 & 0 & 1\\
1 & b & c & 0 & 0 & 0
\end{bmatrix}
\]
where $d\ge 3$ and $b,c$ are coprime nonzero integers (and in the case $n=5$, $(b,c)\ne\pm(1,1)$). This description is equivalent to the one in \cref{thm:main}. It is easy to compute the degrees of the corresponding varieties to be $$d,\qquad  1+\max\{|b|,|c|,|b-c|\}, \qquad 1+|b|+|c|,$$ respectively. (Note that $b$ and $c$ are the unique integers for which $\bu = b\bv + c\bw$.)
\end{remark}

Our result shows that all examples of maximal regularity for toric varieties of codimension 2, in a suitable sense, come from examples in $\PP^n$ for $n\le 5$. In \cite{KW}, Kwak shows that the only smooth nondegenerate complex threefolds in $\PP^5$ with $\reg X \le \deg X - 1$ are complete intersections of two quadrics or the Segre threefold (obtained by the Segre embedding $\PP^2\times \PP^1\embed \PP^5$). It may be verified explicitly that the complete intersections appearing in \cref{thm:main} are not smooth, so it follows from Kwak's results that the only \textit{smooth} example of a toric threefold in $\PP_\CC^5$ having maximal regularity is given by the Segre threefold. A corollary of \cref{thm:main} is the following stronger fact.
\begin{theorem}
\label{smoothness}
A smooth nondegenerate toric variety of codimension $2$ (a priori of arbitrary dimension) having maximal regularity is, up to isomorphism, one of the following:
\begin{itemize}
    \item a monomial curve (embedded in $\PP^3$),
    \item a hyperplane section of the Segre threefold (embedded in $\PP^4$), or 
    \item the Segre threefold (embedded in $\PP^5$).
\end{itemize}
\end{theorem}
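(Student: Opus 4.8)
The plan is to derive \cref{smoothness} directly from the classification in \cref{thm:main}. By that theorem, a nondegenerate codimension-$2$ toric variety of maximal regularity is either one of the $14$ complete intersections of \cref{ci-lattice-ideals} or a member of one of the three non--complete-intersection families of \cref{rem:explicitdegrees}, with $n'\in\{4,5,6\}$. The complete intersections are never smooth, as already observed in the discussion preceding the theorem, so they may be discarded. Since a variety with a zero Gale vector is a cone over a lower-dimensional nondegenerate (hence nonlinear) toric variety, it is singular at the cone point; so for smooth examples we may assume $n=n'$, whence $\dim X_\cL=n'-3$. The non-CI families then consist only of curves ($n'=4$), surfaces ($n'=5$), and threefolds ($n'=6$); in particular there is no smooth example of dimension $\ge 4$, as this would require $n'\ge 7$ and hence land in the complete-intersection case. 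It therefore suffices to decide smoothness within the three explicit families of \cref{rem:explicitdegrees}.

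The main tool I would use is a local criterion at the torus-fixed points. For the toric variety $X_A$ with column configuration $A=\{a_1,\dots,a_{n'}\}$ and lattice polytope $P=\mathrm{conv}(A)$, each vertex $v$ of $P$ gives a torus-fixed point whose affine chart is $\Spec\Bbbk[\ZZ_{\ge0}\langle a_i-v\rangle]$. Along any edge $e$ of $P$ at $v$, a point on the corresponding extreme ray is a nonnegative combination only of the $a_i$ lying on $e$; hence the primitive lattice vector of that ray lies in the local semigroup if and only if the lattice point of $e$ adjacent to $v$ is one of the columns. Consequently, if some vertex has an incident edge whose adjacent lattice point is omitted from $A$, the local semigroup is not saturated, $X_A$ is not normal, and therefore $X_A$ is singular. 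I would combine this with the converse observation that once these adjacency conditions hold in our families the variety is actually smooth, either because $A$ exhausts the lattice points of a smooth polytope $P$ (so $X_A=X_P$), or by the classification of smooth varieties of minimal degree.

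Applying this criterion family by family is the heart of the argument. For $n'=4$ the exponents are $0,1,d-1,d$, so the adjacent lattice points $1$ and $d-1$ are always present; the affine charts at both ends are $\Spec\Bbbk[\NN]$, and since the one-dimensional torus orbit is smooth, every member $[1:t:t^{d-1}:t^d]$ is a smooth monomial curve in $\PP^3$. For $n'=5$ the polytope is, in a suitable affine chart, a quadrilateral with one vertical edge of lattice length $\ell=\max\{\abs{b},\abs{c},\abs{b-c}\}$ carrying the three columns at heights $0,b,c$; the adjacency criterion at the two endpoints of this edge forces $\ell\le 2$, and since three distinct heights give $\ell\ge 2$ we get $\ell=2$, i.e.\ degree $3$. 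In that case $A$ is exactly the set of lattice points of a smooth polygon, so $X_\cL$ is a smooth cubic scroll, which is a hyperplane section of the Segre threefold. For $n'=6$ the two vertical edges of the triangular prism $P$ have lattice lengths $\abs{b}$ and $\abs{c}$ and each carries only its two endpoints among the columns; adjacency forces $\abs{b}=\abs{c}=1$, giving a smooth threefold of degree $3$ in $\PP^5$, which, being smooth of minimal degree, must be the Segre threefold $\PP^2\times\PP^1$. Assembling the three cases yields precisely the list in \cref{smoothness}.

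The step I expect to be the main obstacle is the smoothness analysis of the surface and threefold families, and in particular the bookkeeping of which lattice points of $P$ actually occur among the columns. The subtle point is that omitting interior lattice points need not create a singularity (as the smooth monomial curves show), so one cannot simply demand that $A$ exhaust the lattice points of $P$; the correct obstruction is the failure of saturation at a vertex, detected edge by edge. The remaining care is in the converse direction: verifying that the surviving minimal cases ($\ell=2$ and $\abs{b}=\abs{c}=1$) are genuinely smooth and identifying them with the named classical varieties rather than with singular degenerations, for which the fact that the short vertical edge always has length $1$ (so that the configuration is never a cone over a rational normal curve) is essential.
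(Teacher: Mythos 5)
Your proposal is correct and takes essentially the same route the paper indicates for its (omitted) proof: reduce via \cref{thm:main} and \cref{rem:explicitdegrees} to the explicit matrices, discard the complete intersections as singular, and show that every non-complete-intersection member outside the listed cases fails even to be normal. Your vertex--edge saturation criterion is a clean way of making precise the paper's parenthetical claim that the excluded varieties $X_A$ ``are not even normal,'' and your cone argument eliminating zero Gale vectors fills in a step the paper leaves implicit.
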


This hyperplane section of the Segre threefold is obtained by substituting $(b,c)=(1,-1)$ in the $3\times 5$ matrix of \cref{rem:explicitdegrees}, and the Segre threefold itself is obtained by substituting $(b,c)=(1,\pm 1)$ in the $4\times 6$ matrix. We omit the proof of this theorem; the result may be deduced by elementary means from the explicit form of the matrices in \cref{rem:explicitdegrees} (in all cases there, except those listed in \cref{smoothness}, the varieties $X_A$ are not even normal) and the explicit lattices of \cref{ci-lattice-ideals}. \\ 

%%% END OF NEW STUFF

Along the way to proving \cref{thm:main}, we also characterize toric varieties $X_\cL$ of maximal regularity in the cases when $X_\cL$ is a curve, a complete intersection, or arithmetically Cohen--Macaulay in characteristic $0$.

While our main results apply only to toric varieties, many of our intermediate results apply more generally to nonreduced schemes $X_\cL$ (or occasionally to even more general classes of schemes), and so we will state intermediate results as generally as the methods allow.

\subsection*{Outline}
Preliminaries and notation are discussed in \cref{sec:preliminaries}. In \cref{sec:curves}, we characterize monomial curves of maximal regularity. Specializing this result to the case of codimension $2$ allows us to solve the $n'=4$ case of \cref{thm:main}. In \cref{sec:CM}, we focus on the case when $I_\cL$ is Cohen--Macaulay; that is, when $X_\cL$ is arithmetically Cohen--Macaulay. 

In \cref{sec:PSrecall}, we recall the methods of \cite{PS}, which allow us to extract information about general schemes $X_\cL$ of codimension $2$ by understanding the case where $X_\cL \embed \PP^3$ is a curve. The bulk of the work lies in \cref{sec:analyzingreduction}, in which we use this framework to analyze the case when $X_\cL$ has codimension $2$ and is not arithmetically Cohen--Macaulay. This allows us to complete the proof of \cref{thm:main} in \cref{sec:finalproof}. We suggest directions for future research in \cref{sec:future-work}.

\subsection*{Acknowledgments}
We would like to thank Irena Peeva for suggesting this topic. We owe a great deal to Christine Berkesch and Mahrud Sayrafi for their mentorship and support. We also thank the anonymous referee for helpful comments and for pointing out the reference \cite{KW}. This research began at the 2020 University of Minnesota, Twin Cities REU, supported by NSF RTG grant DMS-1745638.

\section{Preliminaries}\label{sec:preliminaries}
Let $\Bbbk$ be a field, and let $S_n\coloneqq \Bbbk[\bx] = \Bbbk[x_1,\dots,x_n]$ be a graded polynomial ring (with the standard grading). When $n$ is fixed or clear from context, we will simply write $S$ for $S_n$. All ideals of $S$ are assumed to be homogeneous, unless stated otherwise; in the same vein, all generators of ideals are assumed to be homogeneous. Given an ideal $I\subseteq S_n$, we write $\deg I$ for the multiplicity of $S_n/I$. 

For a graded $S$-module $M$, we denote the graded minimal free resolution of $M$
$$\cdots \lra F_2\lra F_1\lra F_0 \lra M\lra 0$$ by $F_\bullet$. Each $F_i$ is a direct sum of twists of $S$, so we can write $F_i = \bigoplus_{j\in\ZZ} S(-j)^{\beta_{i,j}}$ for some nonnegative integers $\beta_{i,j}$, called the \textit{Betti numbers} of $F_\bullet$. 

The \textit{Castelnuovo--Mumford regularity} of a finitely generated graded $S$-module $M$ is defined by $$\reg M \coloneqq \max\{j\mid\beta_{i,i+j}\ne 0 \text{ for some } i\}.$$ It does not depend on the choice of minimal free resolution. For a closed subscheme $X\embed \PP^{n-1}_\Bbbk$, we write $\reg X$ for the Castelnuovo--Mumford regularity of its vanishing ideal; this is equivalent to the definition of $\reg X$ given in \cref{sec:introduction} but is more suited to our purposes. We will often refer to this quantity simply as the ``regularity.''

Because we are primarily concerned with toric varieties, we include some background on lattice ideals. Let $\cL\subseteq\ZZ^n$ be a lattice of rank $r\ge 1$ that is orthogonal to the all-$1$'s vector $(1,1,\dots,1)\in\ZZ^n$, so $r\le n-1$. Then $\cL$ defines a homogeneous \textit{lattice ideal} $$I_\cL \coloneqq \langle \bx^\bu - \bx^\bv \mid \bu - \bv \in \cL \rangle\subseteq S.$$ The codimension of $I_\cL$ equals $r$. We say that $\cL$ is \emph{saturated} if $\QQ \cL \cap \ZZ^n = \cL$. In this case, $I_\cL$ is prime and we say that $I_\cL$ is \textit{toric}.

Let $B\in\ZZ^{n\times r}$ be a matrix whose columns form a basis for $\cL$, whose entry in the $(i,j)$ position is given by $b_{ij}$. The \textit{Gale diagram} $G\coloneqq G_B$ of $\cL$ is the collection of \textit{Gale vectors}, which are the row vectors $\bb_i\coloneqq (b_{i1},\dots,b_{ir})\in\ZZ^r$. Note that for two such matrices $B$ and $B'$, the corresponding Gale diagrams differ only by an element of $\GL_r(\ZZ)$. We may therefore speak of \textit{the} Gale diagram $G_\cL \in \ZZ^{n\times r}/\GL_r(\ZZ)$ of $\cL$, independent of the choice of $B$. One can often move back and forth between the combinatorial data contained in the Gale diagram and the algebraic properties of a lattice ideal. Note that Gale vectors should be thought of as ordered pairs $(i,\bb_i)$ (so that particular vectors $\bb_i\in\ZZ^r$ are considered ``with multiplicity''), but for convenience, we will frequently treat each Gale vector $(i,\bb_i)$ simply as a vector $\bb_i\in\ZZ^r$.

Given an ideal $I\subseteq S_n$ (not necessarily prime), one can form for each $m\ge n$ the ideal $I_m\coloneqq I S_m\subseteq S_m$ (under the natural embedding $S_n\embed S_m$). It is easy to argue that for each $m \ge n$, $$\reg I_m = \reg I, \qquad \deg I_m = \deg I, \quad\text{and}\quad \codim I_m = \codim I.$$  If $I=I_\cL\subseteq S_n$ is a lattice ideal, then the ideal $I_m$ is the lattice ideal whose Gale diagram is obtained by adding $m-n$ zero vectors to the Gale diagram of $I_n$. Thus, as far as regularity, degree, and codimension are concerned, it suffices to consider lattice ideals $I_\cL$ all of whose Gale vectors are nonzero. 

Given $\cL$, we let $\Gamma\coloneqq \ZZ^n/\cL$ be an abelian group. Note that $S$, $I_\cL$, $S/I_\cL$, the minimal free resolution of $S/I_\cL$ over $S$, and the Koszul homology of $S/I_\cL$ are graded by $\Gamma$ \cite{PS}. A \textit{fiber} is a set of all monomials of $S$ with a fixed degree $C\in\Gamma$. The requirement that $I_\cL$ be orthogonal to $(1,\dots,1)$ ensures that all fibers are finite.

In the case that $\cL$ has rank $r=2$, there are more precise results on when $I_\cL$ is toric, a complete intersection, or Cohen--Macaulay. In this case, $\codim I_\cL=2$ and a Gale diagram $G$ consists of vectors in $\ZZ^2$. Following \cite{PS}, we say a Gale diagram is \textit{imbalanced} if every Gale vector lies on the $y$-axis or has nonpositive $y$-coordinate. We have the following:

\begin{lemma}[\cite{PS}*{Lemma 3.1}]\label{lem:ci-imbalanced}
The lattice ideal $I_\cL$ is a complete intersection if and only if it has an imbalanced Gale diagram.
\end{lemma}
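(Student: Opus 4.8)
The plan is to pass from $I_\cL$ to the \emph{lattice basis ideals} of $\cL$ and to rephrase ``imbalanced'' as a condition on supports. Fix a $\ZZ$-basis $m_1,m_2$ of $\cL$, let $f_i=\bx^{m_i^+}-\bx^{m_i^-}$ be the corresponding binomials, and set $J=\langle f_1,f_2\rangle$. Two standard facts organize the argument. Since $m_1,m_2$ are linearly independent, $f_1,f_2$ is a regular sequence, so $J$ is a codimension-$2$ complete intersection, hence unmixed and Cohen--Macaulay; and $I_\cL$ is the saturation $J:(x_1\cdots x_n)^\infty$. Therefore $I_\cL$ is a complete intersection if and only if $I_\cL=J$ for \emph{some} basis, and because $J$ is unmixed, $I_\cL=J$ precisely when $J$ is saturated with respect to $x_1\cdots x_n$, i.e.\ when no (necessarily codimension-$2$) minimal prime of $J$ contains a variable.

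Next I would read off the combinatorics. The $i$-th Gale vector for this basis is $\bb_i=((m_1)_i,(m_2)_i)$, so its quadrant records the signs of $(m_1)_i$ and $(m_2)_i$. A Gale vector violates imbalance exactly when it has nonzero $x$-coordinate and positive $y$-coordinate, that is, when $i\in\supp(m_1)\cap\supp(m_2^+)$. Hence the Gale diagram of $(m_1,m_2)$ is imbalanced if and only if $\supp(m_1)\cap\supp(m_2^+)=\varnothing$. Since acting by $\GL_2(\ZZ)$ is the same as changing the basis of $\cL$, the statement ``$\cL$ has an imbalanced Gale diagram'' is equivalent to ``$\cL$ has a basis with $\supp(m_1)\cap\supp(m_2^+)=\varnothing$.''

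Everything then reduces to the following claim $(\star)$: for a fixed basis, $J=I_\cL$ if and only if, after possibly negating $m_1$ or $m_2$ or interchanging them (operations that do not change $J$), the resulting Gale diagram is imbalanced. Granting $(\star)$, the lemma follows. If $\cL$ has an imbalanced Gale diagram, take the corresponding basis; by $(\star)$ its lattice basis ideal equals $I_\cL$, so $I_\cL$ is a complete intersection. Conversely, if $I_\cL$ is a complete intersection, then it is minimally generated by two binomials, and a short argument recovering $\cL$ from $I_\cL$ shows that their exponent vectors form a basis $(m_1,m_2)$ with $J=I_\cL$; by $(\star)$ some negation/swap of this basis is imbalanced, and that basis furnishes an imbalanced Gale diagram of $\cL$.

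It remains to prove $(\star)$, which is the technical core. For the direction ``imbalanced $\Rightarrow J=I_\cL$'', the disjointness $\supp(m_1)\cap\supp(m_2^+)=\varnothing$ means the variables occurring in $\bx^{m_2^+}$ are absent from $f_1$; I would use this near-triangular structure to show that setting any variable to zero and propagating the vanishing through $f_1$ and $f_2$ forces a locus of codimension greater than $2$, so no minimal prime of $J$ meets a coordinate hyperplane and $J$ is saturated. The reverse direction is the main obstacle: assuming no negation/swap of the diagram is imbalanced (so in each such orientation some Gale vector lies strictly in the open upper half-plane off the $y$-axis), I must exhibit a genuine extra minimal prime of $J$ inside a coordinate subspace, proving $J\subsetneq I_\cL$. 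I would locate such a component through the cellular and primary decomposition of binomial ideals in the style of Eisenbud--Sturmfels: the balance hypothesis should yield a subset $W$ of coordinates on which each $f_i$ has either both or neither of its monomials divisible by a variable of $W$, producing a component supported on $\{x_i=0:i\in W\}$. The delicate points are verifying that this candidate has codimension exactly $2$ and is not contained in $\overline{V(I_\cL)}$; here the rank-$2$ hypothesis is essential, since with only two binomials the quadrant bookkeeping over the finitely many orientations is completely explicit.
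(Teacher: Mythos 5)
First, a note on the comparison: the paper does not prove this lemma at all---it is quoted directly from \cite{PS}*{Lemma 3.1}---so there is no internal argument to measure yours against, and I am judging your proposal on its own terms. Your skeleton is sound: the reduction of the lemma to your claim $(\star)$ is correct (including the standard facts that $J:(x_1\cdots x_n)^\infty=I_\cL$, that $J$ is an unmixed complete intersection of codimension $2$, and that a complete intersection lattice ideal is generated by two binomials whose exponent vectors form a basis of $\cL$), and $(\star)$ itself is true; it is essentially the criterion, in the spirit of \cite{HS}, for a lattice basis ideal to coincide with its lattice ideal. Your sketch of the direction ``imbalanced $\Rightarrow J=I_\cL$'' also completes without trouble: imbalance says $\supp(m_1)\cap\supp(m_2^+)=\varnothing$, and a short case analysis shows that intersecting $V(J)$ with any coordinate hyperplane yields only loci of codimension at least $3$, which cannot be components of the unmixed codimension-$2$ ideal $J$.

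The genuine gap is the converse direction of $(\star)$, which you yourself flag as ``the main obstacle'' and for which you offer only a strategy (Eisenbud--Sturmfels cellular and primary decomposition, an unspecified set $W$, and ``delicate points'' about codimension and non-containment in $\overline{V(I_\cL)}$ that are left unverified). A plan of this kind is not a proof, and as described it is far heavier than necessary. The missing observation is purely combinatorial: the negation/swap group acts on the four open quadrants as the dihedral group of the square, so a diagram fails to be imbalanced in \emph{every} such orientation if and only if two Gale vectors $\bb_i,\bb_j$ lie in \emph{opposite} open quadrants, i.e.\ (after a negation/swap) $i\in\supp(m_1^+)\cap\supp(m_2^+)$ and $j\in\supp(m_1^-)\cap\supp(m_2^-)$. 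In that case $x_i$ divides one monomial of each of $f_1,f_2$ and $x_j$ divides the other monomial of each, so $J\subseteq\langle x_i,x_j\rangle$ outright. Since every variable is a nonzerodivisor modulo a lattice ideal (multiplication by $x_i$ is injective on each $\ZZ^n/\cL$-graded piece of $S/I_\cL$, each such piece being spanned by the common image of the monomials in one fiber), we have $I_\cL\not\subseteq\langle x_i,x_j\rangle$, hence $J\ne I_\cL$. So your set $W$ is just $\{i,j\}$, the extra minimal prime is the coordinate plane $\langle x_i,x_j\rangle$ (codimension exactly $2$ for free), and no appeal to cellular decomposition is needed. With this observation inserted, your proposal becomes a complete and correct proof; without it, the core of the lemma remains unproven.
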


In particular, this result implies that if $I_\cL$ is degenerate, i.e., if $I_\cL\not\subseteq\langle x_1,\dots,x_n\rangle^2$, then $I_\cL$ is a complete intersection. Moreover, we have the following result.

\begin{proposition}[\cite{PS}*{Proposition 4.1}]\label{prop:non-cm-open-quadrants}
The ideal $I_\cL$ is not Cohen--Macaulay if and only if there exists a Gale diagram $G$ for $\cL$ which intersects each of the four open quadrants.
\end{proposition}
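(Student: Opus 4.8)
The plan is to prove \cref{prop:non-cm-open-quadrants}, the characterization of when $I_\cL$ (with $\cL$ of rank $2$) fails to be Cohen--Macaulay in terms of the Gale diagram meeting all four open quadrants. Since $\codim I_\cL = 2$, the Auslander--Buchsbaum formula tells us that $I_\cL$ is Cohen--Macaulay precisely when $\pdim S/I_\cL = 2$, i.e. when the minimal free resolution of $S/I_\cL$ has length $2$ (rather than the maximal possible length $3$). So the statement to prove is equivalent to: $\pdim S/I_\cL = 3$ if and only if some Gale diagram intersects all four open quadrants.

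First I would set up the $\Gamma$-graded minimal free resolution of $S/I_\cL$ and identify $\pdim$ with the vanishing/nonvanishing of the top Betti numbers $\beta_{3,C}$ for $C \in \Gamma$. By the standard identification of multigraded Betti numbers with Koszul homology (or with reduced homology of the relevant simplicial/cellular complexes attached to each fiber), $\beta_{3,C} = \dim_\Bbbk \Tor_3^S(S/I_\cL,\Bbbk)_C$, and each such Tor is computed from the combinatorics of the fiber over $C$. The crucial reduction is that, because the Gale vectors live in $\ZZ^2$, the homological information in each degree is governed by how the monomials of that fiber are distributed among a small number of ``columns'' determined by the sign pattern of the Gale vectors. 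I would make this precise by choosing a basis $B$ for $\cL$, writing each monomial relation as a lattice point, and translating the condition $\beta_{3,C}\ne 0$ into a statement about the existence of a nontrivial homology class.

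The heart of the argument is to show that a nonvanishing top Betti number forces the presence of Gale vectors in all four open quadrants, and conversely that such a configuration produces a genuine syzygy of homological degree $3$. For the forward direction I would analyze the support of the fiber giving $\beta_{3,C}\ne 0$: a third syzygy requires ``mixing'' in both coordinate directions with both signs, which cannot occur if, say, all Gale vectors avoid one open quadrant (since then a suitable half-plane argument collapses the homology, reducing the resolution to length $2$). For the converse, given vectors $\bb_i$ in each of the four open quadrants, I would construct an explicit degree $C$ and an explicit cycle in the Koszul (or cellular) complex that is not a boundary, exhibiting $\beta_{3,C}\ne 0$; the four-quadrant condition is exactly what lets one build a ``square'' whose boundary is null-homologous but which is itself not filled in, producing the needed top homology.

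The main obstacle I anticipate is the bookkeeping in the converse direction: pinning down a specific multidegree $C$ and verifying that the candidate homology class is a genuine cycle that is not a boundary, since this requires controlling all the fibers simultaneously and ruling out unexpected relations coming from the full lattice $\cL$ rather than just the four chosen vectors. I expect the cleanest route is to use the $\GL_2(\ZZ)$-invariance of the Gale diagram to first normalize the configuration (e.g. place one vector at $(1,0)$ and another in a standard position), reducing to a manageable number of cases, and then either cite the explicit syzygy computations of \cite{PS} or carry out the homology calculation directly on the normalized diagram. I would also need to confirm that the four-quadrant condition is well-defined as a property of the $\GL_2(\ZZ)$-orbit, or rather that its failure (all Gale diagrams for $\cL$ miss some open quadrant) is the correct $\GL_2(\ZZ)$-invariant negation, which is where the quantifier ``there exists a Gale diagram'' in the statement does real work.
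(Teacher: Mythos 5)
The paper itself gives no proof of this statement: it is quoted directly as \cite{PS}*{Proposition 4.1}, so the only meaningful comparison is with the argument in \cite{PS}, whose ingredients your outline correctly names --- the Auslander--Buchsbaum reduction of non-Cohen--Macaulayness to $\Tor_3^S(S/I_\cL,\Bbbk)\neq 0$, the complexes $\Delta_C$ and fiber polygons $P_C$, and $\GL_2(\ZZ)$-normalization. Your converse direction (four vectors in the four open quadrants produce a degree $C$ for which $\Delta_C$ carries $2$-sphere homology, hence a nonzero third Betti number) is the right plan, though as written it is only a plan.

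The genuine gap is in your forward direction, and it is not the bookkeeping issue you anticipate. You propose to show that a third syzygy ``cannot occur if all Gale vectors avoid one open quadrant.'' For a fixed Gale diagram this is false, and in the strongest possible way: \emph{every} rank-$2$ lattice $\cL$ has a Gale diagram avoiding two open quadrants, because applying the shear $(b_1,b_2)\mapsto(b_1+kb_2,b_2)$ with $k\gg 0$ to any diagram yields one contained in the union of the open first quadrant, the open third quadrant, and the $x$-axis. If your claim held, every codimension-$2$ lattice ideal would be Cohen--Macaulay. Concretely, $\{(2,1),(0,1),(-2,-1),(0,-1)\}$ misses the open second and fourth quadrants, yet a shear turns it into $\{(1,1),(-1,1),(-1,-1),(1,-1)\}$, for which \cref{lem:deg=a+b} gives $\reg I_\cL=\deg I_\cL=2$; by \cref{prop:general-ci-ideals} and \cref{prop:codim-2-CMnonCI-iff} such an ideal cannot be Cohen--Macaulay. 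The same example shows the four-quadrant condition is \emph{not} $\GL_2(\ZZ)$-invariant, so the confirmation you hope for in your final paragraph is unavailable; the existential quantifier cannot be handled by an invariance argument. What the forward direction actually requires is the normalization used in \cite{PS}: a nonzero $\Tor_3$ in degree $C$ forces $P_C$ to be a primitive parallelogram each of whose vertices is supported by some Gale vector (\cite{PS}*{Theorem 3.4} and its consequences), and choosing $M\in\GL_2(\ZZ)$ carrying $P_C$ to the unit square, those four support conditions say precisely that the \emph{transformed} diagram meets all four open quadrants; the statement is proved by exhibiting this adapted diagram. Finally, your fallback of citing ``the explicit syzygy computations of \cite{PS}'' needs care: the quadrangle criterion quoted in this paper as \cref{prop:PS-syz-quads} is \cite{PS}*{Corollary 4.2}, which follows the very proposition being proved there, so only the earlier results of \cite{PS} (the identification of multigraded $\Tor$ with reduced homology of $\Delta_C$, and the primitivity of $P_C$) are legitimate inputs.
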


%%%%%%%%%%%%%%%%%%%%%%%%%%%%%%%%
\section{Monomial curves}
%%%%%%%%%%%%%%%%%%%%%%%%%%%%%%%%
\label{sec:curves}

Let $A$ be a $2\times n$ matrix of the form 
\begin{equation}
\label{matrixform}
    \begin{bmatrix} 1 & 1 & \cdots & 1\\ a_1 & a_2 & \cdots & a_n \end{bmatrix},
\end{equation} where $0=a_1<a_2 < \cdots < a_n$ and $\gcd(a_2,\dots,a_n)=1$. Every toric ideal of $S_n = \Bbbk[x_1,\dots,x_n]$ defining a curve in $\PP^{n-1}$ is of the form $I_{\ker A}$ for some such matrix $A$, up to reordering columns. In this notation, $\deg I_{\ker A} = a_n$.

The results of \cite{HHS} in conjunction with the general results on curves from \cite{GLP} give a characterization of which monomial curves have maximal regularity, in terms of the entries $\{a_i\}$. We record the following corollary of their results. 

\begin{proposition}\label{prop:toric-curves-iff}
Let $A$ be a matrix as in \eqref{matrixform}, let $d=a_n$, and let $\cL = \ker A$. Then $I_\cL$ has maximal regularity, i.e. $\reg I_\cL = \deg I_\cL - \codim I_\cL + 1$, if and only if one of the following holds:
\begin{itemize}
    \item $d\le n$,
    \item $d \ge n+1$ and the integers $(a_1,a_2,\dots,a_n)$ are either $(0,1,2,\dots,n-3,d-1,d)$ or $(0,1,d-n+3,d-n+4,\dots,d)$.
\end{itemize}
\end{proposition}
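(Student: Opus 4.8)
The plan is to combine two inputs: the general bound and equality analysis for integral projective curves in \cite{GLP}, which yields $\reg I_\cL \le d-n+3$ and characterizes when equality holds; and the explicit computation of the regularity of projective monomial curves in \cite{HHS}, which turns that equality into a combinatorial condition on the exponents $(a_1,\dots,a_n)$. Matching the two reduces the proposition to a combinatorial classification that the two displayed sequences resolve. First I would fix the numerology. Since $\cL=\ker A$ is saturated, $I_\cL$ is prime, so $X_\cL\embed\PP^{n-1}_\Bbbk$ is a nondegenerate integral curve of degree $d=a_n$ and codimension $n-2$; as $0=a_1<a_2<\dots<a_n=d$ are $n$ distinct integers we have $d\ge n-1$, so the hypothesis $d\le n$ of the first bullet means exactly $d\in\{n-1,n\}$, and the target $\reg I_\cL=d-\codim I_\cL+1$ reads $\reg I_\cL=d-n+3$. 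Because $\reg$ is unchanged under extension of the base field, I may assume $\Bbbk$ algebraically closed when invoking \cite{GLP}.

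I would also record the involution $a_i\mapsto d-a_{n+1-i}$, induced by the coordinate swap $s\leftrightarrow t$ in the parametrization $[s^{d-a_1}t^{a_1}:\dots:s^{d-a_n}t^{a_n}]$. It gives an automorphism of $X_\cL$, hence preserves regularity, and it exchanges the two candidate sequences
\[
(0,1,2,\dots,n-3,d-1,d)\qquad\text{and}\qquad (0,1,d-n+3,d-n+4,\dots,d).
\]
This halves the work in the second bullet: it suffices to treat one family.

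Next, the small regime $d\in\{n-1,n\}$. For $d=n-1$ all exponents $0,\dots,n-1$ occur and $X_\cL$ is the rational normal curve, which is arithmetically Cohen--Macaulay with $\reg=2=d-n+3$. For $d=n$ exactly one interior exponent is missing, and a direct computation of the Hilbert function of $S/I_\cL$ (equivalently, of the sumset sizes $\lvert m\{a_1,\dots,a_n\}\rvert$) shows $\reg I_\cL=3=d-n+3$. Thus every $\cL$ with $d\le n$ has maximal regularity with no further condition, matching the first bullet.

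Finally, the large regime $d\ge n+1$, where GLP's equality analysis is substantive: $\reg I_\cL=d-n+3$ holds if and only if $X_\cL$ is \emph{extremal} in the sense of \cite{GLP}, i.e. possesses a $(d-n+3)$-secant line (equivalently, lies in a prescribed multisecant position on a surface of minimal degree). I would feed the monomial structure into the regularity formula of \cite{HHS}: writing $A=\{a_1,\dots,a_n\}$, the regularity is governed by the growth of the sumsets $mA$ inside $\{0,1,\dots,md\}$, so setting $\reg I_\cL=d-n+3$ becomes an explicit condition on the ``gaps'' of $A$. \textbf{The main obstacle} is exactly this combinatorial step: proving that, for $d\ge n+1$, the gap condition forces $A$ to be one of the two displayed sequences and excludes every intermediate exponent set. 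The $s\leftrightarrow t$ symmetry reduces this to a single family, after which the remaining content is to verify that $(0,1,2,\dots,n-3,d-1,d)$ is extremal and that any other exponent set of degree $d\ge n+1$ has regularity strictly below $d-n+3$.
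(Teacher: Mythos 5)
Your plan correctly identifies the same two inputs as the paper (the equality analysis of \cite{GLP} and the regularity results of \cite{HHS}), the same reduction of the first bullet to $d\in\{n-1,n\}$, and the same $k\mapsto d-k$ symmetry exchanging the two sequences. But what you have written is an outline, not a proof: the step you yourself label ``the main obstacle'' --- showing that for $d\ge n+1$ equality forces $(a_i)$ to be one of the two displayed sequences, and conversely that those sequences achieve equality --- is precisely the content of the proposition in this regime, and you never carry it out. The paper closes it as follows: by Comment 1 at the end of \S 2 of \cite{GLP}, equality with $d\ge n+1$ forces $X_\cL$ to be smooth, i.e.\ $a_2=1$ and $a_{n-1}=d-1$; then \cite{HHS}*{Theorem 2.7} gives $\reg I_\cL\le \frac{\lambda-1}{\eps}+3\le d-n+3$, where $\lambda$ is the longest gap in $\cS=\{a_1,\dots,a_n\}$ and $\eps=\max\{i\mid [0,i],[d-i,d]\subseteq\cS\}$, and equality forces $\eps=1$ and $\lambda=d-n+1$. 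Since the gaps sum to $d-n+1$, the condition $\lambda=d-n+1$ means there is a single gap, and $\eps=1$ means $2$ and $d-2$ are not both in $\cS$; together these pin down exactly the two sequences. Sufficiency is then \cite{HHS}*{Theorem 3.4} with $\eps=1$ and $p=d-n+3$. None of this deduction appears in your proposal.

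There is also a structural problem with the route you sketch. You propose to characterize equality via the $(d-n+3)$-secant-line (``extremal'') condition of \cite{GLP} and then translate it into sumset growth; but the \cite{GLP} extremal characterization includes smoothness, and the smoothness reduction is not an optional refinement --- it is a prerequisite for the \cite{HHS} machinery, since the bound of \cite{HHS}*{Theorem 2.7} involves dividing by $\eps$, which is positive only when $1$ and $d-1$ both lie in $\cS$, i.e.\ only when the curve is smooth. So before any ``gap condition'' can be extracted, you must first invoke the necessity of smoothness (as the paper does) to get $a_2=1$ and $a_{n-1}=d-1$; your plan omits this step entirely. Finally, your treatment of the case $d=n$ (``a direct computation of the Hilbert function shows $\reg I_\cL=3$'') is also asserted rather than proved; the paper avoids this by citing \cite{GLP}*{Theorem 3.1}, which covers $d\in\{n-1,n\}$ uniformly.
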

\begin{proof}
    Without loss of generality suppose $\Bbbk$ is algebraically closed. By general theory, since $X_\cL$ is toric, $X_\cL$ is rational.
    
    If $d\in \{n-1,n\}$, then \cite{GLP}*{Theorem 3.1} implies that equality is achieved. So assume $d\ge n+1$. Comment 1 at the end of \S2 of \cite{GLP} now says that a necessary condition for $X_\cL$ to achieve equality in EG is that $X_\cL$ be smooth. So henceforth assume that $X_\cL$ is smooth, i.e., that $a_2=1$ and $a_{n-1}=d-1$.
    
    Using the notation of \cite{HHS}, let $\lambda(X_\cL)$ be the length of the longest gap (i.e., the largest value of $a_k-a_{k-1}-1$) in $\cS=\{a_1,a_2,\cdots,a_{n-1},a_n\}$ and let $\eps = \max\{i \mid [0,i], [d-i,d] \subseteq \cS\}$. Observe that the sum of all gaps is $a_n-a_1-n+1 = d-n+1$, so $\lambda(X_\cL) = d-n+1$ is achieved if and only if there is only a single gap of positive length. We then see from \cite{HHS}*{Theorem 2.7} that 
    \[
    \reg I= \reg (S/I) +1\le \frac{\lambda(X_\cL)-1}{\eps} + 3 \le d-n+3
    \]
    with equality only if $\eps = 1$ and $\lambda(X_\cL) = d-n+1$. Since there can only be a single gap, and $\eps=1$ says that $2$ and $d-2$ cannot both appear in $\mathcal S$, we conclude that the only possibilities for $\mathcal S$ are $\{0,1,2,\dots,n-3,d-1,d\}$ or $\{0,1,d-n+3,d-n+4,\dots,d\}$. Note that these are the same list up to $k\mapsto d-k$.
    
    Finally, we need to know that these cases actually give equality. This is now exactly implied by the statement of \cite{HHS}*{Theorem 3.4} with $\eps = 1$ and $p=d-n+3$.
\end{proof}

In the case of codimension $2$, we find the following corollary.

\begin{corollary}\label{cor:toric-n=4-iff}
Suppose $\cL\subseteq\ZZ^4$ is saturated and has rank $2$, and $I_\cL$ is nondegenerate. Then $\reg I_\cL=\deg I_\cL-1$ if and only if $\cL$ satisfies one of the following cases, up to a reordering of coordinates:
\begin{itemize}
    \item $\cL$ has a Gale diagram equal to $\{(0,2),(-1,0),(-1,-1),(2,-1)\}$. In this case, $I_\cL$ is a complete intersection.
    \item $\cL$ has a Gale diagram equal to $\{(1,0),(-1,1),(-1,-d+1),(1,d-2)\}$ for some $d\ge 3$. In this case, $I_\cL$ is not Cohen--Macaulay if $d\ge 4$, and is Cohen--Macaulay but not a complete intersection if $d=3$.
\end{itemize}
\end{corollary}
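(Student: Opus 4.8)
The plan is to derive this corollary from \cref{prop:toric-curves-iff} by specializing to $n=4$ and then translating the resulting arithmetic conditions on $(a_1,\dots,a_4)$ into conditions on Gale diagrams. Since $\cL\subseteq\ZZ^4$ is saturated of rank $2$ with $I_\cL$ nondegenerate, $X_\cL$ is a nondegenerate monomial curve in $\PP^3$, so $\cL=\ker A$ for some $A$ of the form \eqref{matrixform} with $n=4$ (after reordering coordinates), $\codim I_\cL=2$, and $\deg I_\cL=a_4=:d$; maximal regularity means $\reg I_\cL=d-1$. Applying \cref{prop:toric-curves-iff} with $n=4$, the two exceptional exponent sequences both collapse to $(0,1,d-1,d)$, so maximal regularity holds exactly when $d\le 4$ or ($d\ge 5$ and $(a_1,\dots,a_4)=(0,1,d-1,d)$). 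Enumerating the small cases, for $d=3$ the only sequence is $(0,1,2,3)$, and for $d=4$ the sequences with $\gcd(a_2,a_3,a_4)=1$ are $(0,1,2,4)$, $(0,1,3,4)$, and $(0,2,3,4)$.

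The key reduction I would use to collapse this list is the involution $a_i\mapsto d-a_i$. Replacing the second row of $A$ by $(d-a_1,\dots,d-a_4)$ amounts to left-multiplying $A$ by $\begin{bmatrix}1&0\\d&-1\end{bmatrix}\in\GL_2(\ZZ)$, hence does not change $\cL=\ker A$; after re-sorting, it is just a permutation of coordinates. Under this involution $(0,1,2,4)$ and $(0,2,3,4)$ are identified, while $(0,1,2,3)$, $(0,1,3,4)$, and each $(0,1,d-1,d)$ are fixed. Thus up to reordering coordinates the maximal-regularity curves are exactly $(0,2,3,4)$ together with the family $(0,1,d-1,d)$ for $d\ge 3$. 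Computing a basis of $\ker A$ in each case then shows that $(0,2,3,4)$ has Gale diagram $\{(0,2),(-1,0),(-1,-1),(2,-1)\}$ and that $(0,1,d-1,d)$ has Gale diagram $\{(1,0),(-1,1),(-1,-d+1),(1,d-2)\}$, each up to $\GL_2(\ZZ)$, matching the two stated cases.

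It then remains to pin down the complete-intersection and Cohen--Macaulay behavior. For the first case I would invoke \cref{lem:ci-imbalanced}: the diagram $\{(0,2),(-1,0),(-1,-1),(2,-1)\}$ is imbalanced (every vector lies on the $y$-axis or has nonpositive $y$-coordinate), so $I_\cL$ is a complete intersection. To treat the second family it is convenient to first record, via a degree bound, that any nondegenerate codimension-$2$ complete intersection of maximal regularity is an intersection of two quadrics: a complete intersection of forms of degrees $e,f\ge 2$ has $\reg=e+f-1$ and degree $ef$, and maximal regularity forces $(e-1)(f-1)=1$, i.e. $e=f=2$ and degree $4$. Hence for $d\ne 4$ the second family has the wrong degree to be a complete intersection, and for $d=4$ one checks directly that $(0,1,3,4)$ supports only a one-dimensional space of quadrics and so cannot be a $(2,2)$-complete intersection (equivalently, its Gale diagram admits no imbalanced representative). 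Finally, for the Cohen--Macaulay dichotomy I would use \cref{prop:non-cm-open-quadrants}: for $d\ge 4$ a small unipotent element of $\GL_2(\ZZ)$ moves $(1,0)$ into the open fourth quadrant while keeping the other three vectors in the open quadrants $\mathrm{I},\mathrm{II},\mathrm{III}$, exhibiting a representative meeting all four open quadrants, so $I_\cL$ is not Cohen--Macaulay; for $d=3$ the curve is the twisted cubic, which is arithmetically Cohen--Macaulay.

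The main obstacle I anticipate lies in the ``negative'' directions of the Cohen--Macaulay and complete-intersection analysis of the second family: verifying via \cref{prop:non-cm-open-quadrants} that for $d=3$ \emph{no} $\GL_2(\ZZ)$-representative meets all four open quadrants, and (if one prefers a uniform argument over the degree bound) verifying via \cref{lem:ci-imbalanced} that \emph{no} representative of $\{(1,0),(-1,1),(-1,-d+1),(1,d-2)\}$ is imbalanced. The subtlety is that both conditions are existential over $\GL_2(\ZZ)$, so ruling them out requires controlling all representatives simultaneously; the pairwise linear independence of the four Gale vectors (which holds precisely for $d\ge 3$) together with the positive dependence $\sum_i\bb_i=0$ is what reduces these to finite determinantal checks.
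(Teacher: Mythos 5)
Your proposal is correct, and its skeleton is the same as the paper's: specialize \cref{prop:toric-curves-iff} to $n=4$ (both exceptional sequences collapse to $(0,1,d-1,d)$), compute the Gale diagrams of the resulting kernels, and then settle the complete-intersection and Cohen--Macaulay assertions. Indeed, the paper's entire proof consists of citing \cref{prop:toric-curves-iff} for the list of Gale diagrams and then \cref{lem:ci-imbalanced} and \cref{prop:non-cm-open-quadrants} for everything else. Where you genuinely deviate is in the negative assertions, and your handling is arguably more complete. The paper leans wholly on the two Gale-diagram criteria, which, as you correctly flag, are existential over $\GL_2(\ZZ)$, so the claims ``not a complete intersection'' and ``Cohen--Macaulay when $d=3$'' implicitly require ruling out \emph{all} representatives; the paper leaves those checks unstated. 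You instead dispose of the not-a-complete-intersection claim with the degree/quadric-count argument (a nondegenerate codimension-$2$ complete intersection of maximal regularity must be two quadrics of degree $4$, so $d\neq 4$ is excluded by degree and $d=4$ by the fact that $(0,1,3,4)$ admits only the single quadric $x_1x_4-x_2x_3$); this is the content of \cref{prop:general-ci-ideals}, which appears later in the paper but is proved independently via the Koszul complex, and you re-derive it, so there is no circularity. Likewise you settle the $d=3$ Cohen--Macaulay claim by the classical ACM-ness of the twisted cubic rather than by a $\GL_2(\ZZ)$-exhaustion. Your positive verifications match the paper exactly (imbalancedness for the first case, and the shear $(x,y)\mapsto(x,y-x)$ exhibiting four open quadrants for $d\ge 4$), your involution $a_i\mapsto d-a_i$ identifying $(0,1,2,4)$ with $(0,2,3,4)$ is a correct detail the paper suppresses, and your kernel/Gale computations check out.
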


\begin{proof}
The set of possible Gale diagrams for $\cL$ follows directly from \cref{prop:toric-curves-iff}. The remaining assertions follow from \cref{lem:ci-imbalanced} and \cref{prop:non-cm-open-quadrants}.
\end{proof}

%%%%%%%%%%%%%%%%%%%%%%%%%%%%%%%%
\section{Cohen--Macaulay lattice ideals}
%%%%%%%%%%%%%%%%%%%%%%%%%%%%%%%%
\label{sec:CM}

In \cref{subsec:CM-char-0}, we characterize when $\reg I_\cL=\deg I_\cL-\codim I_\cL+1$ for Cohen--Macaulay lattice ideals $I_\cL\subseteq\langle x_1,\dots,x_n\rangle^2$, under the assumption $\operatorname{char}\Bbbk=0$. Then in \cref{subsec:ci}, we briefly address the case when $I_\cL$ is a complete intersection (in arbitrary characteristic). Finally, in \cref{subsec:codim-2-CMnonCI}, we characterize in arbitrary characteristic the lattice ideals $I_\cL$ of codimension $2$ that are Cohen--Macaulay but not complete intersections that satisfy $\reg I_\cL=\deg I_\cL-\codim I_\cL+1$.

\subsection{The characteristic zero case}\label{subsec:CM-char-0}
Let $\Bbbk$ be a field of characteristic $0$. In Section 3 of \cite{PSti}, the authors prove that $\reg I\le\deg I-\codim I+1$ for any Cohen--Macaulay homogeneous ideal $I\subseteq \langle x_1,\dots,x_n\rangle^2$. Following their proof gives the first part of the following result.

\begin{proposition}\label{prop:general-CM-iff}
Suppose $\operatorname{char}\Bbbk=0$. If $I\subseteq \langle x_1,\dots,x_n\rangle^2$ is a Cohen--Macaulay homogeneous ideal with codimension $c$ and $n_2$ minimal homogeneous generators of degree $2$, then $n_2\le\binom{c+1}{2}$. Further, $\reg I=\deg I-c+1$ if and only if $n_2=\binom{c+1}{2}-1$ or $n_2=\binom{c+1}{2}$, and in this case, the Hilbert series of $S/I$ is given by
\[\frac{1+ct+\sum_{i=2}^{\reg I-1}t^i}{(1-t)^{n-c}}.\]
\noindent If $n_2=\binom{c+1}{2}$, then $\reg I=2$ and all minimal generators of $I$ have degree $2$. Moreover, the minimal free resolution of $I$ has the following form:
\[0\lra S(-c-1)^{c\binom{c+1}{c+1}}\lra\dots\lra S(-3)^{2\binom{c+1}{3}}\lra S(-2)^{1\binom{c+1}{2}}\lra I\lra 0.\]

If $n_2=\binom{c+1}{2}-1$, then $\reg I\ge 3$ and $I$ satisfies one of the following two conditions:
\begin{itemize}
    \item All minimal generators of $I$ have degree $2$, and $\reg I=3$.
    \item $I$ has exactly one minimal generator of degree greater than or equal to $3$, which has degree exactly $\reg I$.
\end{itemize}
\end{proposition}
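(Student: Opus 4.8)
The plan is to reduce the entire statement to a computation with the Hilbert function of an Artinian reduction, and then to read off the structure of the minimal generators from the comparison theorems for graded Betti numbers. First I would use that $\operatorname{char}\Bbbk=0$, so $\Bbbk$ is infinite, to choose linear forms $\ell_1,\dots,\ell_{n-c}$ forming a regular sequence on the Cohen--Macaulay ring $S/I$. Passing to the polynomial ring $\bar S\coloneqq S/(\ell_1,\dots,\ell_{n-c})$ in $c$ variables and to the Artinian algebra $A\coloneqq\bar S/\bar I$ (with $\bar I$ the image of $I$) preserves all graded Betti numbers, hence $\reg I=\reg\bar I=\reg A+1$, and converts the Hilbert series of $S/I$ into $h_A(t)/(1-t)^{n-c}$, where $h_A(t)=\sum_{i=0}^s h_it^i$ (with $h_s\ne 0$) is the $h$-polynomial of $A$. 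Because $I\subseteq\frm^2$, the ideal $\bar I$ lies in the square of the graded maximal ideal of $\bar S$, so $h_0=1$, $h_1=c$, and $h_2=\binom{c+1}{2}-n_2$ (the number $n_2$ of quadratic generators equals $\dim_\Bbbk\bar I_2$). Since $A$ is standard graded Artinian its support is an interval, giving $h_i\ge 1$ for $0\le i\le s$; the single inequality $h_2\ge 0$ already yields $n_2\le\binom{c+1}{2}$. Finally $\reg I=s+1$ and $\deg I=\dim_\Bbbk A=\sum_{i=0}^s h_i$.

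With this dictionary the regularity equality becomes bookkeeping. One has $\deg I-c+1=2+\sum_{i=2}^s h_i$, so $\reg I=\deg I-c+1$ is equivalent to $\sum_{i=2}^s h_i=s-1$; as the left side is a sum of $s-1$ integers each at least $1$, equality holds exactly when $h_2=h_3=\cdots=h_s=1$. This is precisely the stated Hilbert series $1+ct+\sum_{i=2}^{s}t^i$ over $(1-t)^{n-c}$, and it shows equality forces either $s=1$ (so $h_2=0$, i.e.\ $n_2=\binom{c+1}{2}$) or $h_2=1$ (i.e.\ $n_2=\binom{c+1}{2}-1$). Conversely, if $n_2=\binom{c+1}{2}$ then $h_2=0$ collapses the support to $\{0,1\}$, forcing $\bar I$ to equal the square of the graded maximal ideal; then $\reg I=2$, all generators are quadrics, and the displayed resolution is the pulled-back minimal resolution of the square of the maximal ideal, with its classical Betti numbers $(i+1)\binom{c+1}{i+2}$. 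If instead $n_2=\binom{c+1}{2}-1$, then $h_2=1$, and Macaulay's growth bound gives $h_{i+1}\le 1$ whenever $h_i=1$; with $h_i\ge1$ on the support this propagates to $h_2=\cdots=h_s=1$ automatically, so equality again holds. This settles the ``if and only if'' and the Hilbert series.

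It remains to pin down the minimal generators when $n_2=\binom{c+1}{2}-1$, where $A$ has Hilbert function $(1,c,1,1,\dots,1)$ with support $[0,s]$ and $s\ge 2$, so $\reg I=s+1\ge3$. Here I would compare $\bar I$ against the lexicographic ideal $L$ with the same Hilbert function: a direct computation shows that $L$ consists of every quadratic monomial except the square of the smallest variable, together with the single generator equal to that variable raised to the power $s+1$, so $L$ has $\binom{c+1}{2}-1$ generators in degree $2$, exactly one in degree $s+1$, and none in between. By the Bigatti--Hulett theorem (valid since $\operatorname{char}\Bbbk=0$), $L$ maximizes the graded Betti numbers among ideals with this Hilbert function, whence $\beta_{0,j}(\bar I)\le\beta_{0,j}(L)$ for every $j$; thus $\bar I$ has no minimal generator in degrees $3,\dots,s$, at most one in degree $s+1$, and none beyond (the latter also forced by $\reg\bar I=s+1$). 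If such a degree-$(s+1)$ generator is present, it is the unique minimal generator of degree $\ge3$ and has degree exactly $\reg I$. The remaining possibility, that $\bar I$ is generated entirely in degree $2$, is the point I expect to be delicate: to exclude $s>2$ here I would invoke Gotzmann's persistence theorem, since an ideal generated in degrees $\le2$ whose Hilbert function meets the Macaulay bound passing from degree $2$ to $3$ (which $h_2=h_3=1$ does) must meet it in every subsequent degree, forcing $h_i=1$ for all $i\ge2$ and contradicting that $A$ is Artinian. Hence in the degree-$2$-generated case $s=2$ and $\reg I=3$, which completes the dichotomy. The main obstacle is thus not the regularity identity, which is essentially forced by the Hilbert-function inequality, but this last structural step, where the extremality of the lex ideal and Gotzmann persistence must be combined to control the generators.
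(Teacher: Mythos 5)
Your proof is correct, but it takes a genuinely different route from the paper's. The paper works with the reverse-lexicographic generic initial ideal $N=\operatorname{gin}(I)$: Borel-fixedness of $N$ (using $\operatorname{char}\Bbbk=0$) gives the bound $n_2\le\binom{c+1}{2}$ and the length computation, and the structural statements about generators are extracted from the inequality $\beta_{i,j}(I)\le\beta_{i,j}(N)$ together with Green's crystallization principle to rule out a quadric-generated $I$ with $\reg I\ge 4$. You instead bypass generic initial ideals entirely: you pass to the Artinian reduction $A=\bar S/\bar I$, observe that everything is encoded in the $h$-vector $(1,c,h_2,\dots,h_s)$, and get the equivalence $\reg I=\deg I-c+1\iff h_2=\cdots=h_s=1$ by pure counting plus Macaulay's growth bound (which makes the ``only if/if'' symmetry and the Hilbert series completely transparent); you then control the generator degrees by comparing with the lexicographic ideal via Bigatti--Hulett, and replace Green's crystallization with Gotzmann's persistence theorem to force $\reg I=3$ in the quadric-generated case. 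All the ingredients check out: the lex ideal with $h$-vector $(1,c,1,\dots,1)$ does have exactly $\binom{c+1}{2}-1$ quadric generators, one generator $x_c^{s+1}$, and nothing in between, so Bigatti--Hulett gives exactly the required dichotomy, and the Gotzmann argument correctly derives a contradiction with Artinianness when $s\ge 3$. What your approach buys is a self-contained Hilbert-function argument in which the extremal structure is visible at a glance (and, since Pardue extended Bigatti--Hulett to all characteristics while Macaulay's bound and Gotzmann persistence are characteristic-free, your argument would in fact survive in positive characteristic); what the paper's approach buys is uniformity with the Peeva--Stillman framework it cites, where the gin is already the central object.
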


\begin{proof}
Using the notation in \cite{PSti}, let $N$ be the generic initial ideal of $I$ with respect to the reverse lexicographic order. Then $N\subseteq \langle x_1,\dots,x_n\rangle^2$ is a Cohen--Macaulay Borel ideal, and none of the variables $x_{c+1},\dots,x_n$ appear in the minimal monomial generators of $N$. Furthermore, $I$ and $N$ have the same Hilbert function, so it follows that $n_2\le\binom{c+1}{2}$. Moreover, $x_c^{\reg I}$ is a minimal monomial generator of $N$, and $\overline{N}\coloneqq N\otimes\overline{S}$ is an artinian ideal in $\overline{S}\coloneqq S/(x_{c+1},\dots,x_n)$, where $\deg I$ equals the length of $\overline{S}/\overline{N}$. It then follows that the length of $\overline{S}/\overline{N}$ equals $\reg I+c-1$ if and only if all the monomials of degree $2$ supported on $x_1,\dots,x_c$ lie in $\overline{N}$, except possibly for $x_c^2$. Since $N$ is Borel, this is equivalent to requiring $n_2=\binom{c+1}{2}-1$ or $n_2=\binom{c+1}{2}$. We then see that $\reg I\ge 3$ if $n_2=\binom{c+1}{2}-1$ and $\reg I=2$ if $n_2=\binom{c+1}{2}$.

The Hilbert series of $S/I$ equals the Hilbert series of $S/N$. If $\reg I=\deg I-c+1$, it is straightforward to compute the Hilbert series of $S/N$ using the above information.

Now, suppose $n_2=\binom{c+1}{2}$. Since $\reg I=2$, it follows that $I$ has no minimal generators of degree more than $2$. Moreover, the minimal free resolution of $I$ is pure, and we can see that the resolution takes the desired form (say, by considering the Hilbert series of $S/I$).

Suppose $n_2=\binom{c+1}{2}-1$. Since the Betti numbers of $N$ are at least those of $I$, we find that $I$ has at most one minimal generator of degree at least $3$, and if such a generator exists, it must have degree exactly $\reg I$. If no such generator exists, then $I$ is generated in degree $2$; if furthermore $\reg I\ge 4$, then $N$ has no generator in degree $3$, which leads to a contradiction by Green's crystallization principle \cite{Green}*{Proposition 2.28}. Thus, if $I$ is generated in degree $2$, then $\reg I\le 3$.
\end{proof}

This yields the following corollary for lattice ideals.

\begin{corollary}
Suppose $\operatorname{char}\Bbbk=0$, and that $I_\cL\subseteq\langle x_1,\dots,x_n\rangle^2$ is Cohen--Macaulay, where $\cL\subseteq\ZZ^n$ is a lattice of rank $r$. Then $\reg I_\cL\le\deg I_\cL-r+1$, and the number of fibers of $\cL$ of degree $2$ is at least $\binom{n+1}{2}-\binom{r+1}{2}$. Furthermore, $\reg I_\cL=\deg I_\cL-r+1$ if and only if the number of fibers of $\cL$ of degree $2$ equals $\binom{n+1}{2}-\binom{r+1}{2}$ or $\binom{n+1}{2}-\binom{r+1}{2}+1$.
\end{corollary}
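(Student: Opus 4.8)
The entire statement is a translation of \cref{prop:general-CM-iff} into combinatorial language, applied with $c = r$ (recall $\codim I_\cL = r$). The one substantive step is to identify the invariant $n_2$ of \cref{prop:general-CM-iff} with the fiber count: writing $f_2$ for the number of fibers of $\cL$ of degree $2$, I claim
\[
n_2 = \binom{n+1}{2} - f_2 .
\]
Everything else is formal substitution.

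To prove this identity, I would argue in two steps. First, since $I_\cL \subseteq \langle x_1,\dots,x_n\rangle^2$ we have $(I_\cL)_0 = (I_\cL)_1 = 0$, so $(\langle x_1,\dots,x_n\rangle I_\cL)_2 = 0$ and every element of $(I_\cL)_2$ is a minimal generator; hence $n_2 = \dim_\Bbbk (I_\cL)_2$. Second, I would compute $\dim_\Bbbk (S/I_\cL)_2$. As $I_\cL$ is a lattice ideal, $S/I_\cL$ is $\Gamma$-graded, it contains no monomial, and any two monomials in a common fiber are congruent modulo $I_\cL$; thus $(S/I_\cL)_C$ is one-dimensional for every nonempty fiber $C$. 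Summing over the degree-$2$ fibers gives $\dim_\Bbbk (S/I_\cL)_2 = f_2$, and since $\dim_\Bbbk S_2 = \binom{n+1}{2}$ we get $\dim_\Bbbk (I_\cL)_2 = \binom{n+1}{2} - f_2$, proving the claim.

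With the identity established, the three assertions drop out. The bound $\reg I_\cL \le \deg I_\cL - r + 1$ is the \cite{PSti} inequality recalled above (applicable since $I_\cL$ is Cohen--Macaulay and lies in $\langle x_1,\dots,x_n\rangle^2$). The inequality $n_2 \le \binom{r+1}{2}$ of \cref{prop:general-CM-iff} becomes $f_2 \ge \binom{n+1}{2} - \binom{r+1}{2}$, and the equality criterion $n_2 \in \{\binom{r+1}{2}-1,\binom{r+1}{2}\}$ becomes $f_2 \in \{\binom{n+1}{2}-\binom{r+1}{2},\, \binom{n+1}{2}-\binom{r+1}{2}+1\}$. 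The only point needing care is the standard fact that a lattice ideal contains no monomials, which guarantees that each nonempty fiber contributes exactly one dimension to $S/I_\cL$; granting this, there is no real obstacle, as the argument is pure bookkeeping on top of \cref{prop:general-CM-iff}.
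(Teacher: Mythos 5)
Your proposal is correct and matches the paper's (implicit) argument: the corollary is stated as an immediate consequence of \cref{prop:general-CM-iff}, and the translation you supply via $n_2=\binom{n+1}{2}-f_2$ (using that $I_\cL\subseteq\langle x_1,\dots,x_n\rangle^2$ forces $n_2=\dim_\Bbbk(I_\cL)_2$, and that each degree-$2$ fiber contributes exactly one dimension to $S/I_\cL$ since lattice ideals contain no monomials) is exactly the intended bookkeeping.
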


\subsection{Complete intersections}\label{subsec:ci}
If $\operatorname{char}\Bbbk=0$, for complete intersection ideals $I$, \cref{prop:general-CM-iff} states that $\reg I=\deg I-\codim I+1$ if and only if either $\codim I=1$ or $\codim I=2$ and $I$ is generated by two quadratics. But a straightforward computation, using the Koszul complex, allows us to generalize this to arbitrary characteristic. We state this formally below.

\begin{proposition}\label{prop:general-ci-ideals}
Let $\Bbbk$ be a field of arbitrary characteristic. Suppose $I\subseteq\langle x_1,\dots,x_n\rangle^2$ is a homogeneous complete intersection ideal. Then $\reg I\le\deg I-\codim I+1$, with equality holding if and only if $\codim I=1$ or if $\codim I=2$ and $I$ is generated by two quadratics.
\end{proposition}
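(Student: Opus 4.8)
The plan is to compute everything directly from the Koszul complex, which is the minimal free resolution of a complete intersection. Suppose $I = \langle f_1, \dots, f_c \rangle$ is a complete intersection of codimension $c$ with $\deg f_i = d_i$, where each $d_i \ge 2$ since $I \subseteq \langle x_1, \dots, x_n\rangle^2$. The degree is $\deg I = \prod_{i=1}^c d_i$, and the Koszul resolution gives the regularity as $\reg I = \left(\sum_{i=1}^c d_i\right) - c + 1$, since the top Betti number sits in homological degree $c$ with internal degree $\sum d_i$. So the inequality $\reg I \le \deg I - \codim I + 1$ reduces to the purely numerical claim
\[
\sum_{i=1}^c d_i \;-\; c + 1 \;\le\; \prod_{i=1}^c d_i \;-\; c + 1,
\]
i.e. $\sum_{i=1}^c d_i \le \prod_{i=1}^c d_i$, together with a characterization of when equality holds.

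The next step is to establish this elementary inequality for integers $d_i \ge 2$. I would prove by a short induction on $c$ that $\sum d_i \le \prod d_i$, with equality exactly when $c = 1$ (the sum and product trivially agree) or when $c = 2$ and $d_1 = d_2 = 2$ (since $2 + 2 = 4 = 2 \cdot 2$). The inductive step is straightforward: if $c \ge 2$ and $P = \prod_{i<c} d_i \ge \sum_{i<c} d_i \ge 2$, then $\prod d_i = P \cdot d_c \ge 2 d_c \ge d_c + 2 \ge d_c + \sum_{i<c} d_i$ whenever $P \ge 2$, and one checks the equality conditions propagate only in the stated cases. One must verify that no equality case survives for $c \ge 3$ or for $c = 2$ with some $d_i \ge 3$; this is a finite check built into the induction.

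Finally I would translate the equality conditions back into the statement. Equality $\reg I = \deg I - \codim I + 1$ holds precisely when $\sum d_i = \prod d_i$, which by the above occurs if and only if $c = 1$ (so $\codim I = 1$, any single form) or $c = 2$ with $d_1 = d_2 = 2$ (so $\codim I = 2$ and $I$ is generated by two quadratics), matching the claimed characterization. The case $c=1$ gives equality unconditionally because $\reg I = d_1 = \deg I$ and $\deg I - \codim I + 1 = d_1 - 1 + 1 = d_1$.

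**The main obstacle** is essentially notational rather than mathematical: the real content is the trivial inequality $\sum d_i \le \prod d_i$ for integers at least $2$, and the only care required is pinning down the equality cases cleanly. The one point deserving explicit mention is the assertion that for a complete intersection the Koszul complex \emph{is} the minimal free resolution (so that $\reg I$ and $\deg I$ read off directly), and that the hypothesis $I \subseteq \langle x_1, \dots, x_n\rangle^2$ forces every $d_i \ge 2$; without the latter the inequality can fail (e.g. a linear form has $d_i = 1$), which is why the degenerate cases are excluded. I expect the entire argument to fit in a few lines once the Koszul computation is invoked.
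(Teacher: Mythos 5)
Your proposal is correct and follows essentially the same route as the paper: read off $\reg I = \sum d_i - c + 1$ and $\deg I = \prod d_i$ from the Koszul resolution, reduce to the numerical inequality $\sum d_i \le \prod d_i$ for integers $d_i \ge 2$, and identify the equality cases $c=1$ and $c=2$ with $d_1=d_2=2$. No substantive differences.
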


\begin{proof}
Let $I$ have codimension $m$ and let its minimal generators have degrees $d_1,\dots,d_m\ge 2$. Then $I$ is resolved by a Koszul complex, and one directly finds that $\reg I = d_1 + \cdots + d_m - m +1$ while $\deg I = d_1\cdots d_m$. The given inequality is then equivalent to $d_1+\cdots+d_m \le d_1\cdots d_m$. This is always true when $m=1$ and impossible for $m\ge 3$. When $m=2$, it holds only for $d_1=d_2=2$.
\end{proof}

If $I=I_\cL$ is a nondegenerate complete intersection lattice ideal, we see from \cref{prop:general-ci-ideals} that $\reg I_\cL=\deg I_\cL-\codim I_\cL+1$ only holds if $r\le 2$, and it always holds if $r=1$. If $r=2$, we find the following.

\begin{corollary}\label{cor:finitely-many-ci-lattice-ideals}
Fix a field $\Bbbk$ of arbitrary characteristic. There are only finitely many nondegenerate lattice ideals $I_\cL$ with the following properties:
\begin{itemize}
    \item all Gale vectors are nonzero,
    \item $\codim I_\cL \ge 2$,
    \item $I_\cL$ is a complete intersection, and
    \item $\reg I_\cL = \deg I_\cL - \codim I_\cL + 1$.
\end{itemize}
Moreover, all such lattices have rank $2$, and have $n\le 8$. Up to permutations of the coordinates, there are exactly $23$ such lattice ideals, $14$ of which are saturated (see \cref{ci-lattice-ideals} for a full list).
\end{corollary}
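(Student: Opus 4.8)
The plan is to use \cref{prop:general-ci-ideals} to force strong constraints on the generator degrees of $I_\cL$, translate these into a bound on the supports of a lattice basis, and then observe that the nonvanishing of all Gale vectors caps $n$, leaving only finitely many lattices to enumerate.

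First I would show that any $\cL$ satisfying the four listed conditions has rank $2$ and that $I_\cL$ is minimally generated by two binomials of degree $2$. Since $\codim I_\cL = \operatorname{rank}\cL \ge 2$ and nondegeneracy gives $I_\cL\subseteq\langle x_1,\dots,x_n\rangle^2$, \cref{prop:general-ci-ideals} forces $\codim I_\cL = 2$ (so $\operatorname{rank}\cL = 2$) and forces $I_\cL$ to be generated by two quadrics. As $I_\cL$ is a lattice ideal, it admits a minimal generating set of binomials (a standard property of lattice ideals); the multiset of minimal-generator degrees is the invariant recorded by $\beta_{1,\bullet}$, so these binomial generators both have degree $2$. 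Writing them in primitive form $g_i = \bx^{\ba_i} - \bx^{\bb_i}$ with $\ba_i,\bb_i$ of disjoint support and $\lvert\ba_i\rvert = \lvert\bb_i\rvert = 2$, and setting $\bw_i \coloneqq \ba_i - \bb_i$, I would then verify that $\bw_1,\bw_2$ form a basis of $\cL$: putting $\cL' \coloneqq \langle\bw_1,\bw_2\rangle$ one has $\langle g_1,g_2\rangle\subseteq I_{\cL'}\subseteq I_\cL = \langle g_1,g_2\rangle$, hence $I_{\cL'} = I_\cL$, and since a lattice is recovered from its ideal via $\cL = \{\bu-\bv : \bx^\bu - \bx^\bv\in I_\cL\}$ this gives $\cL' = \cL$; as $\operatorname{rank}\cL = 2$, the two vectors are a basis.

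Next I would bound $n$. Each $\bw_i$ is a difference of two degree-$2$ monomials of disjoint support, so $\supp(\bw_i)$ has at most $4$ elements and $\supp(\bw_1)\cup\supp(\bw_2)$ has at most $8$. Taking $[\,\bw_1\mid\bw_2\,]$ as a basis matrix, the Gale vector of coordinate $i$ is its $i$-th row, which is nonzero precisely when $i\in\supp(\bw_1)\cup\supp(\bw_2)$; since all Gale vectors are assumed nonzero, every coordinate lies in this union, and therefore $n\le 8$.

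Finiteness and the enumeration then follow cleanly. For each $n\le 8$ there are only finitely many pairs $(\bw_1,\bw_2)$ of disjoint-support degree-$2$ binomial vectors, so the lattices $\cL = \langle\bw_1,\bw_2\rangle$ form a finite set which, by the previous paragraphs, contains every $\cL$ meeting the four hypotheses. It then remains to test each candidate lattice against the four conditions directly --- nondegeneracy, nonvanishing of all Gale vectors, the complete intersection property (through \cref{lem:ci-imbalanced}, i.e.\ imbalancedness of the Gale diagram), and the equality $\reg I_\cL = \deg I_\cL - 1$ (decided by \cref{prop:general-ci-ideals}) --- then group the survivors up to permutation of the coordinates and test $\QQ\cL\cap\ZZ^n = \cL$ to isolate the saturated ones. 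The conceptual steps are short; I expect the main obstacle to be precisely this terminal enumeration. Although bounded, the number of candidate pairs in dimensions up to $n=8$ is substantial, and one must carefully remove redundancy --- the same $\cL$ arising from different bases, and coordinate permutations relating distinct candidates --- in order to certify that exactly $23$ inequivalent lattice ideals survive, of which exactly $14$ are saturated. I would carry this out by direct computation and record the result in \cref{ci-lattice-ideals}.
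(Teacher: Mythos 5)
Your proposal is correct and takes essentially the same approach as the paper: use \cref{prop:general-ci-ideals} to force $\codim I_\cL=2$ and generation by two quadrics, observe that each quadric binomial involves at most four variables so that the nonvanishing of all Gale vectors gives $n\le 8$, and finish with an explicit computer enumeration. The additional details you supply (disjointness of supports via nondegeneracy and saturation, and recovering $\cL$ from the two generating binomials) are sound and simply make explicit what the paper's shorter argument leaves implicit.
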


\begin{proof}
Note that any quadratic generator coming from a lattice has at most four variables in its support (it must be of the form $x_px_q-x_rx_s$, for not-necessarily-distinct $p,q,r,s\in\{1,\dots,n\}$). If $I_\cL$ is a complete intersection such that $\reg I_\cL=\deg I_\cL-\codim I_\cL+1$, of codimension at least $2$, \cref{prop:general-ci-ideals} implies that there can be at most $8$ variables from $\{x_1,\dots,x_n\}$ used in its minimal generators, since $I_\cL$ is generated by two quadratics. There are therefore finitely many such lattices. Conducting this finite search explicitly in Macaulay2 \cite{M2}, we obtain the full list in \cref{ci-lattice-ideals}.
\end{proof}

\begin{table}
    \centering
    \begin{tabular}{ccc}
	$n$& Saturated $\cL$ & Non-Saturated $\cL$ \\\hline 
	\addlinespace[1ex]
	$3$ & $\emptyset$ & $\left[\begin{smallmatrix}0&2\\2&-1\\-2&-1\end{smallmatrix}\right]
	\left[\begin{smallmatrix}0&2\\2&-2\\-2&0\end{smallmatrix}\right]$\\\addlinespace[1ex]\hline
	\addlinespace[1ex]
	$4$ &$\left[\begin{smallmatrix}0&2\\2&-1\\-1&0\\-1&-1\end{smallmatrix}\right]$ & $\left[\begin{smallmatrix}0&1\\0&1\\2&-2\\-2&0\end{smallmatrix}\right]
	\left[\begin{smallmatrix}0&1\\0&1\\2&-1\\-2&-1\end{smallmatrix}\right]
	\left[\begin{smallmatrix}0&2\\2&0\\0&-2\\-2&0\end{smallmatrix}\right]
	\left[\begin{smallmatrix}0&2\\2&-1\\0&-1\\-2&0\end{smallmatrix}\right]$\\\addlinespace[1ex]\hline
	\addlinespace[1ex]
	$5$&$\left[\begin{smallmatrix}0&1\\0&1\\2&0\\-1&-1\\-1&-1\end{smallmatrix}\right] 
	\left[\begin{smallmatrix}0&1\\0&1\\2&-1\\-1&0\\-1&-1\end{smallmatrix}\right]
	\left[\begin{smallmatrix}0&1\\0&1\\1&0\\1&-2\\-2&0\end{smallmatrix}\right]
	\left[\begin{smallmatrix}0&2\\1&0\\1&-1\\-1&0\\-1&-1\end{smallmatrix}\right]
	\left[\begin{smallmatrix}0&2\\2&0\\0&-1\\-1&0\\-1&-1\end{smallmatrix}\right]$ & $\left[\begin{smallmatrix}0&1\\0&1\\2&-1\\0&-1\\-2&0\end{smallmatrix}\right]
	\left[\begin{smallmatrix}0&1\\0&1\\2&0\\0&-2\\-2&0\end{smallmatrix}\right]$\\\addlinespace[1ex]\hline
	\addlinespace[1ex]
	$6$&$\left[\begin{smallmatrix}0&1\\0&1\\1&-1\\1&-1\\-1&0\\-1&0\end{smallmatrix}\right]
	\left[\begin{smallmatrix}0&1\\0&1\\1&0\\1&-1\\-1&0\\-1&-1\end{smallmatrix}\right]
	\left[\begin{smallmatrix}0&1\\0&1\\2&0\\0&-1\\-1&0\\-1&-1\end{smallmatrix}\right]
	\left[\begin{smallmatrix}0&1\\0&1\\2&-1\\0&-1\\-1&0\\-1&0\end{smallmatrix}\right]
	\left[\begin{smallmatrix}0&1\\0&1\\2&0\\0&-2\\-1&0\\-1&0\end{smallmatrix}\right]$ &  $\left[\begin{smallmatrix}0&1\\0&1\\2&0\\0&-1\\0&-1\\-2&0\end{smallmatrix}\right]$\\\addlinespace[1ex]\hline
	\addlinespace[1ex]
	$7$&$\left[\begin{smallmatrix}0&1\\0&1\\1&0\\1&-1\\0&-1\\-1&0\\-1&0\end{smallmatrix}\right]
	\left[\begin{smallmatrix}0&1\\0&1\\2&0\\0&-1\\0&-1\\-1&0\\-1&0\end{smallmatrix}\right]$ & $\emptyset$\\\addlinespace[1ex]\hline
	\addlinespace[1ex]
	$8$&$\left[\begin{smallmatrix}0&1\\0&1\\1&0\\1&0\\0&-1\\0&-1\\-1&0\\-1&0\end{smallmatrix}\right]$ & $\emptyset$\\\addlinespace[1ex]
\end{tabular}

    \caption{All lattices (up to a reordering of coordinates) giving a complete intersection with $\reg I_\cL = \deg I_\cL - 1$. Given a matrix as above, the corresponding lattice is the column span.}
    \label{ci-lattice-ideals}
\end{table}

This finite set of lattices is independent of the field $\Bbbk$, by \cite{PS}*{Remark 3.2}.

\subsection{Lattice ideals of codimension $2$}\label{subsec:codim-2-CMnonCI}
In this section, we focus exclusively on Cohen--Macaulay lattice ideals $I_\cL$ of codimension $2$; since the case of complete intersections is addressed in \cref{subsec:ci}, we may assume that the lattice ideals in question are not complete intersections.

First, we set some notation. Given any ideal $I$ of $S$, we write the minimal free resolution of $I$ as follows:
\[0\lra\bigoplus_{j=1}^{b_p}S(-d_{pj})\lra\dots\lra\bigoplus_{j=1}^{b_1}S(-d_{1j})\lra I\lra 0,\]
where $p$ equals the projective dimension of $S/I$. For all $1\le i\le p$, we also let $M_i\coloneqq\max\{d_{ij}\mid 1\le j\le b_i\}$ and $m_i\coloneqq\min\{d_{ij}\mid 1\le j\le b_i\}$. We recall that the resolution is called \textit{pure} if $m_i=M_i$ for all $i$. From \cite{PS}*{Remark 5.8 and Theorem 6.1}, we obtain the following.

\begin{lemma}
\label{lem:cm}
Let $I_\cL\subseteq \Bbbk[x_1,\dots,x_n]$ be a lattice ideal of codimension $2$ that is Cohen--Macaulay but not a complete intersection. Then the minimal free resolution of $I_\cL$ is given by the Hilbert--Burch complex, and satisfies $p=2$, $b_1=3$, $b_2=2$, $d_{11}+d_{12}+d_{13}=d_{21}+d_{22}$, and $M_1<m_2$.
\end{lemma}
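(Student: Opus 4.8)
The plan is to assemble the statement from two standard pieces of homological algebra---the Auslander--Buchsbaum formula and the Hilbert--Burch theorem---together with the combinatorial description of resolutions of codimension-$2$ lattice ideals recorded in \cite{PS}. The first two pieces will give the overall shape of the resolution, the equality of degree sums, and the relation $b_2 = b_1-1$; the genuinely lattice-theoretic input will be needed to pin down $b_1 = 3$ and to establish the strict gap $M_1 < m_2$.

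First I would compute the projective dimension. Since $S/I_\cL$ is Cohen--Macaulay of codimension $2$, its depth equals its dimension $n-2$, so the Auslander--Buchsbaum formula yields $\pdim_S(S/I_\cL) = n-(n-2) = 2$; that is, $p = 2$ and the minimal free resolution of $I_\cL$ is $0 \to F_2 \to F_1 \to I_\cL \to 0$. Because $S/I_\cL$ is Cohen--Macaulay of codimension $2$, the Hilbert--Burch theorem then applies: the resolution is the Hilbert--Burch complex, the map $\phi\colon F_2 \to F_1$ is given by a $b_1 \times b_2$ matrix with $b_2 = b_1 - 1$, and $I_\cL$ agrees up to a unit with the ideal of maximal minors of $\phi$. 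The degree equality is now forced by homogeneity: writing $\deg \phi_{ik} = d_{2k} - d_{1i}$ and noting that the minor $\Delta_i$ obtained by deleting row $i$ is, up to sign, the $i$-th generator and hence has degree $d_{1i}$, a term-by-term degree count in $\Delta_i$ gives $\sum_j d_{1j} = \sum_k d_{2k}$.

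It remains to fix the Betti numbers and the degree gap, and here the lattice structure enters. Since $I_\cL$ is not a complete intersection it needs at least three generators, so $b_1 \ge 3$; that in fact $b_1 = 3$ (hence $b_2 = 2$, whereupon the degree equality becomes $d_{11}+d_{12}+d_{13} = d_{21}+d_{22}$) for a Cohen--Macaulay codimension-$2$ lattice ideal is exactly what \cite{PS}*{Theorem 6.1} provides, as its description of the minimal resolution of an arbitrary codimension-$2$ lattice ideal forces exactly three generators in the Cohen--Macaulay non--complete-intersection case. The main obstacle is the strict inequality $M_1 < m_2$, which is \emph{not} formal: for a general Cohen--Macaulay codimension-$2$ ideal it can fail, as the ideal $I = (xz,\,yz,\,x^3y - xy^3) \subseteq \Bbbk[x,y,z]$ shows (it is Cohen--Macaulay of codimension $2$ with generator degrees $2,2,4$ but first syzygies in degrees $3$ and $5$, so $M_1 = 4 > 3 = m_2$). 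Thus the separation of the two homological degrees must exploit the binomial nature of the generators, and I would deduce it from the bounds on the twists appearing in the resolution given in \cite{PS}*{Remark 5.8}, which force every first-syzygy degree to strictly exceed every generator degree.
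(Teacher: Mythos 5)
Your proof is correct and ultimately rests on the same external input as the paper, which simply cites \cite{PS}*{Remark 5.8 and Theorem 6.1} for the entire statement; your derivation of $p=2$, the Hilbert--Burch shape, $b_2=b_1-1$, and the degree-sum identity from Auslander--Buchsbaum and Hilbert--Burch just makes explicit which parts are formal before deferring to \cite{PS} for $b_1=3$ and $M_1<m_2$, exactly as the paper does. Your example $(xz,\,yz,\,x^3y-xy^3)$ correctly shows that $M_1<m_2$ can fail for Cohen--Macaulay codimension-$2$ ideals that are not lattice ideals, which is a worthwhile observation even though it is not needed for the proof.
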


We have the following lower bound for the degree of $I_\cL$.

\begin{lemma}[\cite{MNR}*{Theorem 1.2(a), Corollary 1.3}]\label{lem:CMdegbound}
Suppose $I$ is Cohen--Macaulay and $\codim I=2$. Then
$$\deg I\ge \frac{1}{2}m_1m_2+\frac{1}{2}(M_2-M_1)(M_2-m_2+M_1-m_1).$$
Furthermore, the minimal free resolution of $I$ is pure if and only if $\deg I=\frac{1}{2}m_1m_2$ if and only if $\deg I=\frac{1}{2}M_1M_2$.
\end{lemma}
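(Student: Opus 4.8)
The plan is to translate the statement into a numerical inequality about the graded Betti numbers and prove it by a constrained exchange argument. Since $I$ is Cohen--Macaulay of codimension $2$, the Auslander--Buchsbaum formula gives $\pdim S/I=2$, so by the Hilbert--Burch theorem $I$ is resolved by $0\lra F_2\lra F_1\lra I\lra 0$ with $b_1=b_2+1$, and $I$ is generated by the maximal minors of the presentation matrix $\phi$. First I would record the numerator $K(t)=1-\sum_j t^{d_{1j}}+\sum_j t^{d_{2j}}$ of the Hilbert series of $S/I$. Since $\dim S/I=n-2$, the factor $(1-t)^2$ divides $K(t)$; reading off $K(1)=0$ and $K'(1)=0$ recovers $b_1=b_2+1$ and $\sum_j d_{1j}=\sum_j d_{2j}$, and the identity $\deg I=\tfrac12 K''(1)$ then yields the closed form $\deg I=\tfrac12\bigl(\sum_j d_{2j}^2-\sum_j d_{1j}^2\bigr)$. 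This reduces the bound to a purely combinatorial inequality for two multisets of positive integers of sizes $b_2+1$ and $b_2$ with equal sums.

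The subtle point, and the reason this is a genuine theorem rather than a formal manipulation, is that the equal-sum condition alone is far too weak: one can exhibit multisets with the prescribed extremes and equal sums for which the claimed inequality fails, so the proof must use that the degrees really come from a minimal Hilbert--Burch matrix. Concretely, I would extract the realizability constraints. Sorting $d_{1,1}\le\cdots\le d_{1,b_1}$ and $d_{2,1}\le\cdots\le d_{2,b_2}$, minimality forces every nonzero entry of $\phi$ to have positive degree, and the nonvanishing of all maximal minors forces, via a Hall's-theorem matching argument applied to the staircase support of $\phi$, the interlacing $d_{2,j}>d_{1,j+1}$ for every $j$. Combined with the equal-sum identity this produces the two clean telescoping identities $\sum_j(d_{2,j}-d_{1,j})=M_1$ and $\sum_j(d_{2,j}-d_{1,j+1})=m_1$, in which every summand is a positive integer.

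With these in hand I would prove the inequality by a constrained smoothing argument. Writing $2\deg I=\sum_j(d_{2,j}^2-d_{1,j}^2)-M_1^2=\sum_j(d_{2,j}-d_{1,j})(d_{2,j}+d_{1,j})-M_1^2$, I would show that any configuration which is not extremal can be modified, by shrinking an interior gap while respecting both the sorting and the interlacing $d_{2,j}>d_{1,j+1}$, so as to strictly decrease $\deg I$ without changing the four extreme shifts $m_1,M_1,m_2,M_2$. Iterating drives the configuration to a normal form, with the degrees as concentrated as the interlacing permits, on which $2\deg I\ge\Phi:=m_1m_2+(M_2-M_1)(M_2-m_2+M_1-m_1)$ is immediate, giving $\deg I\ge\tfrac12\Phi$. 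For the final assertion, the chain $\deg I=\tfrac12 m_1m_2\iff\text{pure}\iff\deg I=\tfrac12 M_1M_2$ would follow by specializing this analysis: since $M_2>M_1$ (the top row of $\phi$ is nonzero, as otherwise $I$ would be principal) the correction term is nonnegative, so $\deg I\ge\tfrac12 m_1m_2$, and tracing the equality case of the smoothing forces all $d_{1,i}$ equal and all $d_{2,j}$ equal, i.e. purity; a pure resolution has $m_i=M_i$, making $\tfrac12 m_1m_2=\tfrac12 M_1M_2$ the Huneke--Miller value.

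The hard part will be the smoothing step, precisely because the interlacing constraints cannot be ignored. An unconstrained exchange that moves interior shifts freely toward the extremes would pass through non-realizable configurations and ``prove'' a false bound; every move must therefore preserve $d_{2,j}>d_{1,j+1}$, which tightly couples the two degree sequences and forces the reduction to proceed in a prescribed order. Verifying that a decreasing sequence of legal moves always terminates at the normal form, and that the normal form depends on whether the intervals $[m_1,M_1]$ and $[m_2,M_2]$ overlap (the case $M_1<m_2$ relevant to \cref{lem:cm} behaving differently from the overlapping case), is where essentially all the content of the argument lies.
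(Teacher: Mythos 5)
First, a point of comparison: the paper does not prove \cref{lem:CMdegbound} at all --- it is imported verbatim from \cite{MNR} --- so the only benchmark is that source, whose argument your setup closely parallels. Your reductions are correct and are exactly the right ingredients: the Hilbert--Burch structure with $b_1=b_2+1$, the multiplicity formula $2\deg I=\sum_j d_{2,j}^2-\sum_i d_{1,i}^2$ obtained from $(1-t)^2\mid K(t)$, the interlacing $d_{2,j}>d_{1,j+1}$ forced by minimality together with $\codim I=2$ (if it failed, every maximal minor would be divisible by the determinant of the top-left $j\times j$ block of $\phi$, which lies in $\mathfrak{m}^j$), and the two telescoping identities $\sum_j(d_{2,j}-d_{1,j})=M_1$ and $\sum_j(d_{2,j}-d_{1,j+1})=m_1$. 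Your warning that the equal-sum condition alone is insufficient is also correct and worth making (e.g.\ $d_1=(2,4,8)$, $d_2=(3,11)$ gives $46<48$), so the interlacing genuinely carries the content.

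The gap is that the step you yourself flag as carrying ``essentially all the content'' --- the constrained smoothing, its termination, and the verification of the bound on the resulting normal form --- is never executed; as written this is a plan, not a proof. It is also an unnecessary detour, because the bound falls out of your own identities term by term. Setting $e_j\coloneqq d_{2,j}-d_{1,j+1}\ge 1$ with $\sum_j e_j=m_1$ and $e_{b_2}=M_2-M_1$, one has
\[2\deg I=\sum_{j=1}^{b_2}e_j\bigl(d_{2,j}+d_{1,j+1}-m_1\bigr)\ge (M_2-M_1)(M_2+M_1-m_1)+\Bigl(\sum_{j<b_2}e_j\Bigr)m_2,\]
using $d_{2,j}\ge m_2$ and $d_{1,j+1}\ge m_1$ in each earlier term; since $\sum_{j<b_2}e_j=m_1-(M_2-M_1)$, the right-hand side is exactly $m_1m_2+(M_2-M_1)(M_2-m_2+M_1-m_1)$. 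Separately, your chain of equivalences in the ``furthermore'' clause is missing the implication that $\deg I=\tfrac12M_1M_2$ forces purity: this requires the companion upper bound $\deg I\le\tfrac12M_1M_2$ together with its equality analysis, which a lower-bound smoothing cannot produce. (That bound is likewise immediate: with $f_j\coloneqq d_{2,j}-d_{1,j}\ge 1$ and $\sum_jf_j=M_1$ one gets $2\deg I=\sum_jf_j(d_{2,j}+d_{1,j})-M_1^2\le M_1(M_1+M_2)-M_1^2=M_1M_2$, with equality forcing all $d_{1,i}=M_1$ and all $d_{2,j}=M_2$.) With these two direct estimates in place of the smoothing, your outline becomes a complete proof.
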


We can use \cref{lem:CMdegbound} to give a necessary and sufficient condition for when a Cohen--Macaulay lattice ideal $I_\cL$ of codimension $2$ that is not a complete intersection satisfies $\reg I_\cL=\deg I_\cL-\codim I_\cL+1=\deg I_\cL - 1$.

\begin{proposition}\label{prop:codim-2-CMnonCI-iff}
Suppose the lattice ideal $I_\cL$ is of codimension $2$ and is Cohen--Macaulay but not a complete intersection, so $I_\cL\subseteq\langle x_1,\dots,x_n\rangle^2$. Then $\reg I_\cL\le\deg I_\cL-1$, with equality if and only if $I_\cL$ is minimally generated by $3$ quadratics. Moreover, if equality holds, then the minimal free resolution of $I_\cL$ has the form
\[0\lra S(-3)^2\lra S(-2)^3\lra I_\cL\lra 0.\]
\end{proposition}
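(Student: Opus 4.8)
The plan is to read the regularity directly off the Hilbert--Burch resolution supplied by \cref{lem:cm}, reduce the desired statement to a single inequality $\deg I_\cL \ge M_2$, and then settle that inequality (with its equality case) by an elementary optimization over the numerical data permitted by \cref{lem:cm}.

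First I would compute $\reg I_\cL$. By \cref{lem:cm} the minimal free resolution of $I_\cL$ is
\[0\lra S(-d_{21})\oplus S(-d_{22})\lra S(-d_{11})\oplus S(-d_{12})\oplus S(-d_{13})\lra I_\cL\lra 0,\]
so the only nonzero graded Betti numbers of $I_\cL$ sit in homological degrees $0$ (the generators, in degrees $d_{1j}$) and $1$ (the syzygies, in degrees $d_{2j}$). Hence $\reg I_\cL=\max(M_1,\,M_2-1)$. Since \cref{lem:cm} gives $M_1<m_2\le M_2$ and all quantities are integers, $M_1\le M_2-1$, so $\reg I_\cL=M_2-1$. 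As $\codim I_\cL=2$, the bound $\reg I_\cL\le\deg I_\cL-1$ is therefore equivalent to $\deg I_\cL\ge M_2$, and the equality case of the proposition is equivalent to $\deg I_\cL=M_2$.

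Next I would set up notation and compute the degree. Write the generator degrees as $2\le a\le b\le c$ (so $m_1=a$, $M_1=c$, with $a\ge 2$ because $I_\cL\subseteq\langle x_1,\dots,x_n\rangle^2$) and the syzygy degrees as $e\le f$ (so $m_2=e$, $M_2=f$). \cref{lem:cm} records $a+b+c=e+f$ and $c<e$; combining $c<e$ with $e\le f=a+b+c-e$ yields the triangle-type bound $a+b\ge c+2$, which is exactly the regime in which such a resolution exists. From the $K$-polynomial $1-(t^a+t^b+t^c)+(t^e+t^f)$ of the resolution of $S/I_\cL$ --- which vanishes to order $2$ at $t=1$ precisely because $a+b+c=e+f$ --- a short multiplicity computation gives
\[\deg I_\cL=\tfrac{1}{2}\bigl(e^2+f^2-a^2-b^2-c^2\bigr).\]
(One could instead feed these degrees into the lower bound of \cref{lem:CMdegbound}.) Setting $u\coloneqq f-e\ge 0$ and using $e+f=a+b+c$, the target inequality $\deg I_\cL\ge f$ is then equivalent, after multiplying by $4$, to
\[Q+u(u-2)\ge 0,\qquad\text{where}\qquad Q\coloneqq 2(ab+bc+ca)-(a^2+b^2+c^2)-2(a+b+c).\]

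Finally I would carry out the optimization. The term $u(u-2)$ is nonnegative except when $u=1$, where it equals $-1$, so it suffices to understand the minimum of $Q$. Viewed as a downward parabola in $c$ with vertex at $c=a+b-1$, $Q$ is increasing on the admissible interval $b\le c\le a+b-2$, so it is minimized at $c=b$; minimizing the result over $b\ge a$ and then over $a\ge 2$ gives $Q\ge 3a(a-2)$, hence $Q\ge 9$ when $a\ge 3$ and $Q\ge 0$ always. If $a\ge 3$, then $Q+u(u-2)\ge 9-1>0$ with strict inequality. If $a=2$, then $a+b\ge c+2$ forces $c=b$, and $e+f=2(b+1)$ together with $e>c=b$ and $e\le f$ pins $e=f=b+1$, so $u=0$ and $Q+u(u-2)=Q=4(b-2)\ge 0$. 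In all cases $\deg I_\cL\ge M_2$, and equality forces $a=2$ and $b=2$, hence $a=b=c=2$ and $e=f=3$; this is precisely the condition that $I_\cL$ be minimally generated by three quadratics, and the resolution then reads $0\lra S(-3)^2\lra S(-2)^3\lra I_\cL\lra 0$. The step I expect to be the main obstacle is this last optimization: one must extract the right integer constraints from \cref{lem:cm} (that generators have degree $\ge 2$ and that $M_1<m_2$), and in particular dispatch the single borderline case $u=1$, where $u(u-2)=-1$ could a priori destroy positivity --- this is handled by noting that small generator degrees rigidify the admissible syzygy degrees, forcing $u=0$ whenever $Q$ is close to its minimum.
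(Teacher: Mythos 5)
Your proposal is correct, and it reaches the conclusion by a genuinely different route than the paper. Both arguments start the same way: from \cref{lem:cm} you read off $\reg I_\cL=\max(M_1,M_2-1)=M_2-1$, reducing everything to the inequality $\deg I_\cL\ge M_2$ and its equality case. The paper then invokes the cited lower bound of \cref{lem:CMdegbound} (from Migliore--Nagel--R\"omer) and rearranges it as
\[
2\deg I_\cL-2M_2\ \ge\ m_2(m_1-2)+(M_2-M_1-2)(M_2-m_2)+(M_2-M_1)(M_1-m_1),
\]
checking each summand is nonnegative; equality forces $m_1=M_1=2$, and the converse direction (three quadratic generators give equality) is settled by the purity criterion in the same lemma, which yields $\deg I_\cL=\tfrac12 m_1m_2=3$. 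You instead bypass \cref{lem:CMdegbound} entirely: since the Hilbert--Burch resolution determines the Hilbert series, you compute the multiplicity exactly as $\deg I_\cL=\tfrac12(e^2+f^2-a^2-b^2-c^2)$ and reduce the whole proposition to the integer inequality $Q+u(u-2)\ge 0$, which you settle by an elementary (and correct) optimization; I verified the parabola argument, the bound $Q\ge 3a(a-2)$, and the rigidification $c=b$, $e=f=b+1$, $u=0$ in the borderline case $a=2$, which is exactly what neutralizes the potentially negative term $u(u-2)=-1$. What each approach buys: the paper's is shorter and delegates the numerics to a known result, while yours is self-contained modulo \cref{lem:cm}, produces the exact degree formula rather than a bound, and handles both directions of the ``if and only if'' (and the final resolution shape, with $\deg I_\cL=3$, $\reg I_\cL=2$) uniformly within one computation rather than appealing separately to a purity criterion.
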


\begin{proof}
The assertion that $I_\cL\subseteq\langle x_1,\dots,x_n\rangle ^2$ follows from \cref{lem:ci-imbalanced}. This implies $m_1\ge 2$. Now, by \cref{lem:CMdegbound},
\begin{align*}
    2\deg I_\cL -2M_2&\ge m_1m_2 + (M_2-M_1)(M_2-m_2+M_1-m_1) - 2M_2\\
    &= m_2(m_1-2) + (M_2 - M_1 - 2)(M_2 - m_2) + (M_2 - M_1)(M_1-m_1).
\end{align*}
By \cref{lem:cm}, $M_1 < m_2 \le M_2$, so $\reg I_\cL=M_2-1$. In addition, the quantity $(M_2-M_1-2)(M_2-m_2)$ is always nonnegative; this is clear if $M_2 - M_1\ge 2$, and if $M_2-M_1 = 1$ then necessarily $m_2 = M_2$, since $$M_1 < m_2 \le M_2 = M_1 + 1.$$
It follows that $\reg I_\cL=M_2-1\le\deg I_\cL-1$, with equality holding only if $m_1=2 = M_1$, which happens if and only if all minimal generators are of degree $2$. By \cref{lem:cm}, $I_\cL$ has exactly $b_1=3$ minimal generators.

Conversely, if all three minimal generators are of degree $2$, then by \cref{lem:cm}, $d_{21}+d_{22}=d_{11}+d_{12}+d_{13}=6$ and $d_{21},d_{22}\ge m_2>M_1=2$, so $d_{21}=d_{22}=3$. Thus, the minimal free resolution of $I_\cL$ is pure. Examining the degrees of the twists, we find $\reg I_\cL=2$, and by \cref{lem:CMdegbound}, we have $\deg I_\cL=3$, so equality holds. The claimed Betti numbers for the minimal free resolution of $I_\cL$ follow as well.
\end{proof}

In fact, using the results in \cite{PS} (or by mimicking the argument of \cref{cor:finitely-many-ci-lattice-ideals}), it can be seen that the requirement given in \cref{prop:codim-2-CMnonCI-iff} that $I_\cL$ be minimally generated by $3$ quadratics shows that there can only be finitely many rank $2$ lattices $\cL$ with no zero Gale vectors such that $I_\cL$ is nondegenerate, Cohen--Macaulay, not a complete intersection, and satisfies $\reg I_\cL=\deg I_\cL-1$. After performing this finite search using \cite{M2}, we find the following.

\begin{proposition}\label{prop:codim-2-CMnonCI-list}
Suppose the lattice $\cL$ has rank $2$, and that $I_\cL$ is Cohen--Macaulay but not a complete intersection, which implies $I_\cL\subseteq\langle x_1,\dots,x_n\rangle^2$. Then $\reg I_\cL\le\deg I_\cL-1$, with equality if and only if $\cL$ has a Gale diagram in which the subset of nonzero vectors equals one of the following:
\begin{itemize}
    \item $\{(1,1),(1,-2),(-2,1)\}$,
    \item $\{(1,0),(0,1),(1,-2),(-2,1)\}$,
    \item $\{(1,1),(0,1),(-1,0),(-1,-1),(1,-1)\}$,
    \item $\{(1,0),(0,1),(1,1),(-1,0),(0,-1),(-1,-1)\}$.
\end{itemize}
Of these four possibilities for $\cL$, the first is not saturated, and the remaining possibilities are saturated.
\end{proposition}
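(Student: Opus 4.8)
The plan is to reduce the whole statement to \cref{prop:codim-2-CMnonCI-iff} together with a finite computer search, in close analogy with the proof of \cref{cor:finitely-many-ci-lattice-ideals}. By \cref{prop:codim-2-CMnonCI-iff} the inequality $\reg I_\cL\le\deg I_\cL-1$ is automatic, and equality holds precisely when $I_\cL$ is minimally generated by three quadratic binomials, in which case its minimal free resolution is $0\to S(-3)^2\to S(-2)^3\to I_\cL\to 0$ and $\deg I_\cL=3$. So it suffices to enumerate, up to permutation of coordinates, all rank $2$ lattices $\cL$ with every Gale vector nonzero for which $I_\cL$ is Cohen--Macaulay, not a complete intersection, and minimally generated by three quadrics, and then to confirm that the four displayed Gale diagrams are exactly the solutions.

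First I would argue that this enumeration is a \emph{finite} search. Each quadratic binomial generator of $I_\cL$ has the form $\bx^{\bu}-\bx^{\bv}$ with $\bx^\bu,\bx^\bv$ of degree $2$, so its exponent difference $\bd\coloneqq\bu-\bv\in\cL$ has entries in $\{-2,-1,0,1,2\}$ and support of size at most $4$. Writing $\bd_1,\bd_2,\bd_3$ for the difference vectors of the three generators and $\cL'\coloneqq\langle\bd_1,\bd_2,\bd_3\rangle\subseteq\cL$, the three generators lie in $I_{\cL'}\subseteq I_\cL$ and generate $I_\cL$, whence $I_{\cL'}=I_\cL$. Since a lattice ideal determines its defining lattice, this forces $\cL=\cL'$, so $\cL$ is generated by three vectors with entries bounded by $2$ in absolute value. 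In particular a variable $x_i$ has $\bb_i\ne 0$ exactly when $i$ lies in the support of some $\bd_j$, so $n'\le 12$, and there are only finitely many such lattices up to coordinate permutation and the $\GL_2(\ZZ)$-action on Gale diagrams.

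Next I would carry out this enumeration in Macaulay2 \cite{M2}. Running over the finitely many rank $2$ lattices generated by three vectors with entries in $\{-2,\dots,2\}$, and working up to coordinate permutation and the $\GL_2(\ZZ)$-action, I would retain exactly those $\cL$ all of whose Gale vectors are nonzero and for which $I_\cL$ is not a complete intersection (by \cref{lem:ci-imbalanced}, those whose Gale diagram is not imbalanced; this already forces $I_\cL$ to be nondegenerate), is Cohen--Macaulay (by \cref{prop:non-cm-open-quadrants}, those whose Gale diagram does not meet all four open quadrants), and is minimally generated by three quadrics. This returns precisely the four Gale diagrams in the statement, and as in \cref{cor:finitely-many-ci-lattice-ideals} the list is independent of $\Bbbk$ by \cite{PS}*{Remark 3.2}. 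The saturation claims then follow from the criterion that a rank $2$ lattice is saturated if and only if $\gcd_{i<j}\lvert\det(\bb_i,\bb_j)\rvert=1$ for its Gale vectors $\bb_i$: for the first diagram every pairwise determinant equals $\pm 3$, so $\cL$ is not saturated, while each of the remaining three diagrams contains a pair of Gale vectors of determinant $\pm 1$, so those lattices are saturated.

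I expect the main obstacle to be the finiteness step, and specifically the identity $\cL=\cL'$. One must know that recovering the lattice spanned by the three quadratic generators returns $\cL$ on the nose, so that it is the space of \emph{lattices}, and not merely the number of variables, that is bounded; this is what makes the search genuinely finite. Once that is in place the rest is routine, the only delicate point being to quotient correctly by coordinate permutations and by $\GL_2(\ZZ)$ so that no solution is missed or double-counted.
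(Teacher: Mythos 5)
Your proposal is correct and follows essentially the same route as the paper: reduce via \cref{prop:codim-2-CMnonCI-iff} to the condition that $I_\cL$ is minimally generated by three quadrics, bound the search space by mimicking \cref{cor:finitely-many-ci-lattice-ideals}, and finish with a finite Macaulay2 search plus the standard gcd-of-$2\times 2$-minors test for saturation. Your explicit justification of finiteness via $\cL=\cL'$ (a lattice ideal determines its lattice) makes precise a step the paper leaves implicit, but it is the same argument.
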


By combining \cref{prop:codim-2-CMnonCI-list} with \cref{cor:finitely-many-ci-lattice-ideals}, we see that any nondegenerate Cohen--Macaulay lattice ideal $I_\cL$ of codimension $2$ satisfies $\reg I_\cL=\deg I_\cL-1$ if and only if its subset of nonzero Gale vectors is given by one of the finitely many options from \cref{prop:codim-2-CMnonCI-list} and \cref{ci-lattice-ideals}.

\begin{remark}\label{rmk:codim-3-gorenstein}
The setting of \cite{MNR} also allows us to determine when $\reg I=\deg I-\codim I+1$ for nondegenerate Gorenstein ideals $I$ of codimension $3$: from \cite{MNR}*{Theorem 1.4, Corollary 1.5}, we see that $\reg I\le\deg I-2$ for all nondegenerate Gorenstein ideals $I$ of codimension $3$, with equality holding if and only if the minimal free resolution of $I$ has the form
\[0\lra S(-5)\lra S(-3)^5\lra S(-2)^5\lra I\lra 0.\]
In this case, $\reg I=3$ and $\deg I=5$. This can be rephrased to say that equality is achieved if and only if all the minimal generators of $I$ have degree $2$ and $I$ is not a complete intersection.
\end{remark}

%%%%%%%%%%%%%%%%%%%%%%%%%%%%%%%%
\section{Reduction to curves in $\PP^3$}
%%%%%%%%%%%%%%%%%%%%%%%%%%%%%%%%
\label{sec:PSrecall}
The remainder of the paper is devoted to characterizing the non-Cohen--Macaulay toric ideals of codimension $2$ that have maximal regularity. By \cref{prop:non-cm-open-quadrants}, all such toric ideals have at least $4$ nonzero Gale vectors, and \cref{cor:toric-n=4-iff} finishes the case when $\cL$ has exactly $4$ nonzero Gale vectors. Thus, we are able to restrict to the case when $\cL$ has at least $5$ nonzero Gale vectors; see \cref{prop:non-cm-main-result}. We then combine \cref{prop:non-cm-main-result} with previous results to complete the proof of \cref{thm:main}.

In this section, we recall the tools that were used in \cite{PS} to prove \cref{thm:reduction}, restricting our attention to the case when $I_\cL$ has codimension $2$, equivalently, when $\cL$ has rank $2$. The strategy of \cite{PS} is to reduce to the case of curves in $\PP^3$.\\

As usual, we assume that $\cL\subseteq\ZZ^n$ is orthogonal to the all-$1$'s vector $(1,1,\dots,1)\in\ZZ^n$. If $I_\cL$ is not Cohen--Macaulay, then the projective dimension of the $S$-module $S/I_\cL$ equals $3$. We fix a Gale diagram $G$ for $\cL$. Much of our work is motivated by the following result.
\begin{theorem}[\cite{PS}*{Theorem 7.3 and Proposition 7.7}]
\label{thm:reduction}
Let $I_\cL\subseteq \Bbbk[x_1,\dots,x_n]$ be a lattice ideal of codimension $2$ that is not Cohen--Macaulay. Then $\reg I_\cL \le \deg I_\cL$, and this inequality is strict if $I_\cL$ is toric. Furthermore, if equality holds, then any Gale diagram for $\cL$ lies on two lines through the origin in $\RR^2$.
\end{theorem}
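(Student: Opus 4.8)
The plan is to extract both $\reg I_\cL$ and $\deg I_\cL$ from the Gale diagram $G$ and to reduce the comparison to the case of a curve in $\PP^3$, where it can be settled directly. Since $I_\cL$ is not Cohen--Macaulay, \cref{prop:non-cm-open-quadrants} lets me assume $G$ meets all four open quadrants of $\RR^2$, and $\pdim(S/I_\cL)=3$, so the minimal free resolution has length $3$ and $\reg I_\cL=\reg(S/I_\cL)+1$ is governed by the top (third) syzygies. I would first set up the dictionary between $G$ and the numerical invariants: the degree admits a combinatorial expression in terms of $G$ (a normalized volume of the Gale-dual point configuration, corrected by the lattice index in the non-saturated case), while the $\ZZ$-graded degree of a third syzygy should be computable as a sum of support-function values $\sum_{\bb_i\cdot\bc>0}\bb_i\cdot\bc$ over the primitive directions $\bc$ entering the syzygy. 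The non-Cohen--Macaulay hypothesis guarantees the existence of third syzygies whose directions wind once around the origin, and the regularity is read off as the largest total degree occurring among them.

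The reduction itself I would realize through hyperplane sections by linear forms of the shape $x_i-x_j$, whose effect on a lattice ideal is transparent: the section by $x_i-x_j$ is again a lattice ideal, with Gale diagram obtained by merging the two Gale vectors $\bb_i,\bb_j$ into their sum $\bb_i+\bb_j$, lowering $n$ by one. The goal is to choose a sequence of such merges that keeps the diagram meeting all four quadrants, preserves the degree, and---crucially---does not lower the regularity, terminating at a four-vector diagram, i.e.\ a curve $C\subseteq\PP^3$. Because these sections are special rather than general, I expect that controlling degree and regularity under them is the technical heart of the argument: one must show $\deg C=\deg I_\cL$ and $\reg C=\reg I_\cL$, and the regularity statement is the delicate one, since regularity typically drops under sectioning and one must verify that the extremal third syzygy and its $\ZZ$-graded degree survive each merge.

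Granting the reduction, the base case is an explicit computation. A non-Cohen--Macaulay codimension-$2$ lattice ideal in $\PP^3$ has a Gale diagram with exactly one vector in each open quadrant, so its resolution is completely determined by these four vectors; from it I would read off $\reg\le\deg$ directly, with strict inequality when the lattice is saturated (the toric case), the latter also being a consequence of the Gruson--Lazarsfeld--Peskine bound recorded in \cref{prop:toric-curves-iff}. Transferring back along the reduction gives $\reg I_\cL=\reg C\le\deg C=\deg I_\cL$, and the strictness in the toric case is inherited once one checks that saturation is preserved by the chosen sections (or argues it separately via the curve being reduced and irreducible).

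Finally, for the equality clause I would trace the chain backwards. If $\reg I_\cL=\deg I_\cL$, equality must persist through every section, so the terminal four-vector curve $C\subseteq\PP^3$ satisfies $\reg C=\deg C$; among non-Cohen--Macaulay four-vector diagrams this rigid equality forces the configuration $\{\bu,\bv,-\bu,-\bv\}$ lying on two lines through the origin. Propagating this constraint back through the merges---showing that no un-merged vector can leave the two lines $\RR\bu$ and $\RR\bv$ without making the degree strictly exceed the regularity---would yield the conclusion that $G$ itself lies on two lines. The main obstacle, as noted, is the regularity-preservation step in the reduction together with this last rigidity analysis; both hinge on understanding precisely which third syzygy is extremal and how its degree is assembled from the Gale vectors.
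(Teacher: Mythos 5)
Your skeleton is the right one and matches the strategy the paper recalls from \cite{PS}: identifying variables via $x_i-x_j$ is exactly the map $\phi_Q$ of \cref{reductionprocess}, whose effect on the Gale diagram is to merge vectors into their sums, and the base case in $\PP^3$ is settled by the explicit four-vector computation. But two of the claims you lean on are false, and one of them would derail the argument. First, you require the sections to satisfy $\deg C=\deg I_\cL$ and $\reg C=\reg I_\cL$. Neither equality is available in general: the image ideal $J_Q$ has $\deg J_Q=\deg I_\cL$, but passing to the lattice ideal requires saturating by $y_1y_2y_3y_4$, which can strip off codimension-$2$ components supported on coordinate planes, so $\deg I_Q\le\deg I_\cL$ with strict inequality possible no matter how the partition is chosen (by \cref{lem:halfspaces}, equality forces $Q_1\cup Q_3$ and $Q_2\cup Q_4$ each into a closed half-plane, which already fails for a diagram containing $(1,0),(0,1),(-1,-1)$ among the odd-quadrant vectors). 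Likewise the reduction can create a \emph{new} syzygy quadrangle, so $\reg I_Q$ can strictly exceed $\reg I_\cL$. The argument survives only because these two failures point in opposite, favorable directions: one needs just the one-sided chain $\reg I_\cL\le\reg I_Q\le\deg I_Q\le\deg I_\cL$ of \eqref{reductionchain}, where \eqref{ineqA} comes from preserving the extremal syzygy quadrangle and its total degree (\cref{lem:preserve-syz-quad-and-degree}), not from preserving the regularity on the nose. As written, your plan demands an exact invariance that cannot be arranged.

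Second, your route to strictness in the toric case — checking that ``saturation is preserved by the chosen sections'' — does not work: $\cL_Q$ need not be saturated even when $\cL$ is (merging Gale vectors readily produces non-saturated four-vector diagrams; compare the non-saturated $n=4$ entries of \cref{ci-lattice-ideals}). The strictness is instead obtained from the rigidity clause itself: if $\reg I_\cL=\deg I_\cL$ then the Gale diagram lies on two lines through the origin, and a separate fact (\cite{PS}*{Proposition 7.10}) shows a toric ideal of codimension $2$ that is not a complete intersection never has a Gale diagram on two lines. Your back-propagation of the two-line configuration from the curve level to $G$ is the right idea for the ``furthermore'' clause (this is essentially the half-plane argument: a sum of vectors lying on the boundary line of a half-plane containing them forces every summand onto that line), but you should decouple it from the toric strictness rather than trying to carry saturation through the reduction.
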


Given $C\in\Gamma=\ZZ^n/\cL$ and $\ba\in C$, the monomials of degree $C$ are in bijection with the lattice points in $P_\ba\coloneqq\operatorname{conv}(\{\bu\in\ZZ^2\mid B\bu\le\ba\})$. Note that $P_\ba$ and $P_{\ba'}$ are lattice translates if $\ba-\ba'\in \cL$, so by considering polygons up to translation, we define $P_C\coloneqq P_\ba$ for $\ba\in C$. We say $P_C$ is \textit{primitive} if it contains no lattice points other than its vertices. For each $C\in\Gamma$, let $\Delta_C$ be the simplicial complex on $\{1,\dots,n\}$ generated by the supports of all monomials in the fiber of $C$.

 If $S/I_\cL$ has a minimal $i$th syzygy of degree $C$, then by Theorem 3.4 of \cite{PS}, the simplicial complex $\Delta_C$ is homologous to the $(i-1)$-sphere, and $P_C$ is primitive. If $i=3$, then $P_C$ is a primitive parallelogram and is called a \textit{syzygy quadrangle}. Furthermore, in this case, $\Delta_C$ has the homology of the $2$-sphere. The minimal free resolution of $S/I_\cL$ is controlled by the syzygy quadrangles of $I_\cL$, in a way that is made precise in \cite{PS}. 

Given $\bv,\bw\in\ZZ^2$ with $\abs{\det(\bv,\bw)}=1$, let $[\bv,\bw]\coloneqq\operatorname{conv}(\{(0,0),\bv,\bw,\bv+\bw\})$ be a primitive parallelogram. Recall that a vector $\ba \in [\bv,\bw]$ is \textit{supported by} a vector $\bb$ if $\bb \cdot \ba > \bb\cdot \ba'$ for all $\ba'\in [\bv,\bw]\setminus \{\ba\}$. 
\begin{proposition}[\cite{PS}*{Corollary 4.2}]\label{prop:PS-syz-quads}
Suppose $I_\cL$ is not a complete intersection. Then the parallelogram $[\bv,\bw]$ is a syzygy quadrangle if and only if each vertex of $[\bv,\bw]$ is supported by at least one vector in the Gale diagram $G$.
\end{proposition}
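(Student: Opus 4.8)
The plan is to reduce the statement to a purely combinatorial question about the simplicial complex $\Delta_C$, using the dictionary between syzygies and the topology of fibers recorded above. Recall that $[\bv,\bw]$ is a syzygy quadrangle precisely when there is a degree $C\in\Gamma$ carrying a minimal third syzygy of $S/I_\cL$ whose polytope $P_C$ is a lattice translate of $[\bv,\bw]$; by \cite{PS}*{Theorem 3.4} (and the converse available in that framework) this happens if and only if $P_C=[\bv,\bw]$ and $\Delta_C$ has the homology of $S^2$. Since $\abs{\det(\bv,\bw)}=1$, the parallelogram is automatically primitive, so only the homological condition is at issue. To set up coordinates I would fix the representative $\ba\in\ZZ^n$ with $a_i=\max_{\bu}\,\bb_i\cdot\bu$, the maximum taken over the four vertices $\bu_1,\dots,\bu_4$ of $[\bv,\bw]$; this is the smallest $\ba$ for which all four vertices satisfy $B\bu\le\ba$. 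For each vertex $\bu_k$ the support set is $S_k=\{i\mid \bb_i\cdot\bu_k<a_i\}$, and its complement records exactly the Gale vectors maximized at $\bu_k$.

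The first real step is to translate the support hypothesis. Because each nonzero $\bb_i$ attains its maximum on $[\bv,\bw]$ along a single face — a vertex or an edge — every coordinate $i$ is either a \emph{vertex-supporter}, omitted from exactly one $S_k$, or an \emph{edge-supporter}, omitted from exactly the two support sets at the endpoints of one edge. In this language, $\bu_k$ is supported by a Gale vector in the sense of the statement if and only if it has a vertex-supporter, equivalently if and only if $\bigl(\bigcap_{l\ne k}S_l\bigr)\setminus S_k\ne\emptyset$: some coordinate lies in every support set except $S_k$.

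The heart of the proof is then the claim that $\Delta_C$ has the homology of $S^2$ exactly when all four vertices are supported. I would first dispose of the edge-supporters: each such $i$ lies in only two of the four support sets, so its link is the union of those two facets with $i$ deleted — two simplices meeting along a common face, hence contractible — and deleting these cone points one at a time preserves the homotopy type. This reduces $\Delta_C$ to the subcomplex on the vertex-supporters. If all four vertices are supported, choosing one vertex-supporter $v_k$ per vertex exhibits the boundary of a tetrahedron on $\{v_1,\dots,v_4\}$ inside $\Delta_C$ (the $3$-face is excluded, since no $S_k$ contains all four $v_l$), and the reduction collapses the remainder onto it, giving $S^2$; if some vertex is unsupported, the analogous reduction yields a complex with $\tilde H_2=0$, so $\Delta_C\not\simeq S^2$. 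Running in parallel, I would verify the geometric realizability $P_\ba=[\bv,\bw]$: the only way $P_\ba$ could be strictly larger is for a lattice point to lie in one of the triangular regions cut off outside an edge by the two supporter half-planes at that edge's endpoints, and primitivity of the edges of $[\bv,\bw]$ forces these regions to contain no lattice points other than the endpoints, so $P_\ba$ is exactly the parallelogram.

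The main obstacle I expect is precisely this topological bookkeeping for $\Delta_C$: controlling how the edge-supporters and any remaining coordinates attach, and confirming that an unsupported vertex genuinely kills $\tilde H_2$ rather than merely rearranging homology, while simultaneously certifying that the detected syzygy is \emph{minimal}. The hypothesis that $I_\cL$ is not a complete intersection enters here. By \cref{lem:ci-imbalanced} a complete intersection has an imbalanced Gale diagram, confined to a closed half-plane, so no parallelogram can have Gale vectors in all four of its (plane-spanning) normal cones, and correspondingly $S/I_\cL$ carries no third syzygies; both sides of the equivalence are then vacuously false. The content of the proposition therefore lives in the non-complete-intersection regime, where the reduction of \cite{PS} to curves in $\PP^3$ and the syzygy--topology correspondence of \cite{PS}*{Theorem 3.4} apply.
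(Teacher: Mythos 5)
The paper offers no proof of this proposition: it is quoted verbatim from \cite{PS}*{Corollary 4.2}, so there is nothing internal to compare against. Your reconstruction via the fiber complexes $\Delta_C$ is, in substance, the right argument and almost certainly the one underlying the cited result. The key points are exactly as you identify them: with $\ba$ chosen so that $a_i=\max_k \bb_i\cdot\bu_k$, the quadruple intersection $S_1\cap S_2\cap S_3\cap S_4$ is empty by construction, a vertex $\bu_k$ is supported precisely when $\bigcap_{l\ne k}S_l\ne\emptyset$, and $\Delta_C$ is a union of four full simplices, so its reduced homology is that of the nerve of $\{S_1,\dots,S_4\}$; this nerve is the boundary of the tetrahedron (hence has $\tilde H_2\ne 0$) exactly when all four triple intersections are nonempty. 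Together with the correspondence between minimal third syzygies and $\tilde H_2(\Delta_C)$ from \cite{PS} and your check that the four supporting inequalities already force $P_\ba=[\bv,\bw]$ (the point being that an extra lattice point would need a coordinate strictly between $0$ and $1$ in the unimodular $(\bv,\bw)$-frame), this yields both directions of the equivalence.

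Two corrections. First, your plan to delete edge-supporters one at a time needs their links to be contractible, which requires $S_k\cap S_l\ne\emptyset$ for adjacent $k,l$; this is guaranteed when all four vertices are supported but is precisely what can fail in the ``only if'' direction, so there you should argue via the nerve (whose nonempty intersections are automatically contractible, and which is missing a $2$-face when some $\bigcap_{l\ne k}S_l=\emptyset$, forcing $\tilde H_2=0$) rather than by vertex deletion. Second, an imbalanced Gale diagram is \emph{not} confined to a closed half-plane: the imbalanced region is $\{x=0\}\cup\{y\le 0\}$, which contains both $(0,1)$ and $(0,-1)$, so \cref{lem:ci-imbalanced} does not give you the half-plane you invoke. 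The conclusion you draw from it --- that a complete intersection admits no parallelogram with all four vertices supported --- is nevertheless true, but it requires a genuinely different argument (the two lines $\bv^\perp,\bw^\perp$ cut the open lower half-plane into at most three sectors, so the fourth open cone must capture the positive $y$-axis, and one checks this is incompatible with $\abs{\det(\bv,\bw)}=1$). Since that discussion concerns only the complete-intersection case excluded by hypothesis, neither issue invalidates your proof of the proposition as stated, but both should be repaired before the argument could stand on its own.
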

In particular, if $I_\cL$ is not a complete intersection, then by \cref{prop:non-cm-open-quadrants}, $I_\cL$ is not Cohen--Macaulay if and only if there exists a Gale diagram for which the unit square $[(1,0),(0,1)]$ is a syzygy quadrangle, or in other words, if and only if $I_\cL$ has at least one syzygy quadrangle (in an arbitrary Gale diagram).

For convenience, we record the following two results, which are implicit in \cite{PS}.

\begin{lemma}\label{lem:contractible-complexes}
Suppose the polygon $P\subseteq\RR^2$ equals $P_{\ba_0}$ for some $\ba_0\in\ZZ_{\ge 0}^n$. Then $P$ contains $0$. Moreover, let $\mathcal{S}$ be the set of all $C\in\Gamma$ such that $P_C$ equals $P$ up to lattice translations, and let $C_0$ be the unique member of $\Gamma$ that contains $\bv=(v_1,\dots,v_n)\in\ZZ^n$, where $v_i$ equals the maximum of $\bb_i\cdot\bu$ for $\bu\in P$. Then $C_0\in\mathcal{S}$, and $\Delta_C$ is contractible for all $C\in\mathcal{S}\setminus\{C_0\}$.
\end{lemma}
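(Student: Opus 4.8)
The first claim is immediate, and I would dispose of it first: since $\ba_0\in\ZZ_{\ge 0}^n$, the origin satisfies $B\cdot 0 = 0\le\ba_0$, so $0$ lies in the defining set $\{\bu\in\ZZ^2\mid B\bu\le\ba_0\}$ and hence in its convex hull $P=P_{\ba_0}$.

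For the remaining assertions, the plan is to exhibit $C_0$ as the \emph{tightest} degree class producing the polygon $P$, and then to show that any other class of $\mathcal{S}$ is forced to contain a global cone point in its complex $\Delta_C$. First I would observe that each $v_i=\max_{\bu\in P}\bb_i\cdot\bu$ is attained at a vertex of the lattice polygon $P$ and is therefore an integer, so $\bv\in\ZZ^n$ and $C_0$ is well-defined. Next I would prove $P_\bv=P$: the inequalities $\bb_i\cdot\bu\le v_i$ hold throughout $P$ by the definition of $v_i$, giving $P\subseteq P_\bv$; conversely, since $P\subseteq\{B\bu\le\ba_0\}$ forces $v_i\le a_{0,i}$ for each $i$, any lattice point with $B\bu\le\bv$ also satisfies $B\bu\le\ba_0$ and thus lies in $P$, giving $P_\bv\subseteq P$. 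Hence $P_{C_0}=P$, so $C_0\in\mathcal{S}$.

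For the contractibility statement, fix $C\in\mathcal{S}\setminus\{C_0\}$. Because $P_C$ is a lattice translate of $P$, I would choose the representative $\ba_C\in C$ for which $P_{\ba_C}=P$ exactly, adjusting any given representative by a suitable element $B\mathbf{s}$ of $\cL$ (which translates $P_\ba$ by $-\mathbf{s}$). The same support-function argument applied to $\ba_C$ yields $v_i\le (a_C)_i$ for all $i$, so $\bc\coloneqq\ba_C-\bv$ is a nonnegative integer vector; moreover $\bc$ represents the class $C-C_0\ne 0$ in $\Gamma$, so $\bc\notin\cL$ and in particular $\bc\ne 0$. Choosing an index $t$ with $c_t>0$, every lattice point $\bu\in P$ satisfies $\bb_t\cdot\bu\le v_t< v_t+c_t=(a_C)_t$, so $t$ lies in the support of every monomial in the fiber of $C$; that is, $t$ belongs to every generator of $\Delta_C$. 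Thus $t$ is a cone point of $\Delta_C$, and a complex with a cone point is contractible.

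The verifications that the maxima are attained at lattice vertices and that the two inclusions give $P_\bv=P$ are routine. The conceptual crux, which I expect to be the main point to get right, is the translation from the geometric datum ``$C\in\mathcal{S}$, $C\ne C_0$'' into the arithmetic datum ``$\ba_C-\bv$ is nonnegative and nonzero'': the inequality $\bv\le\ba_C$ records that $C_0$ is precisely the class whose representative is the exact support vector of $P$, while the nonvanishing of $C-C_0$ in $\Gamma$ guarantees a strictly positive coordinate, which supplies the universal cone point.
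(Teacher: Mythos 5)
Your proof is correct and takes essentially the same approach as the paper's: show $P_{\bv}=P$ to conclude $C_0\in\mathcal{S}$, then for $C\in\mathcal{S}\setminus\{C_0\}$ choose the representative $\ba_C$ with $P_{\ba_C}=P$, deduce $\bv\le\ba_C$ with strict inequality in some coordinate $t$, and conclude that $x_t$ divides every monomial in the fiber, so $t$ is a cone point and $\Delta_C$ is contractible. The only blemish is the sign in your parenthetical remark: replacing $\ba$ by $\ba+B\mathbf{s}$ translates $P_{\ba}$ by $+\mathbf{s}$, not $-\mathbf{s}$, but this is immaterial since $\mathbf{s}$ ranges over all of $\ZZ^2$.
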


\begin{proof}
Note $0\in P$ since $\ba_0\in\ZZ_{\ge 0}^n$. Also, $P_\bv=P$, so $P_{C_0}$ equals $P$ up to lattice translations, and $C_0\in\mathcal{S}$. Let $C\in\mathcal{S}\setminus\{C_0\}$, so there exists $\ba=(a_1,\dots,a_n)\in C$ such that $P_\ba=P$. Note that $\bv\le\ba$. Since $C\neq C_0$, we have $\bv\neq\ba$. Thus, $v_i\le a_i-1$ for some $1\le i\le n$, so $a_i-\bb_i\cdot\bu\ge 1$ for all $\bu\in P$. This means all monomials in the fiber of $C$ are a multiple of $x_i$. Thus, each of the generators of the simplicial complex $\Delta_C$ contains $i\in\{1,\dots,n\}$, so $\Delta_C$ is contractible, as desired.
\end{proof}

\begin{corollary}\label{cor:syzygy-degrees}
Suppose that $C\in\Gamma$ is the degree of a minimal $i$th syzygy of $S/I_\cL$, where $i=1,2,3$. Thus, $C\cap\ZZ_{\ge 0}^n$ is nonempty, and contains some $\ba_0$. Define $\ba\in\ZZ^n$ by letting its $j$th coordinate equal $\max\{\bb_j\cdot\bu\mid\bu\in P_{\ba_0}\}$ for all $1\le j\le n$. Then $\ba\in C$.
\end{corollary}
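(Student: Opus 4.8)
The plan is to recognize that the vector $\ba$ constructed here is precisely the vector $\bv$ appearing in \cref{lem:contractible-complexes}, so that the corollary becomes a direct consequence of that lemma together with the fact that syzygy complexes are not contractible. First I would record that $C\cap\ZZ_{\ge 0}^n$ is nonempty: since $C$ is the degree of a minimal $i$th syzygy, \cite{PS}*{Theorem 3.4} tells us that $\Delta_C$ is homologous to the $(i-1)$-sphere, which for $i\in\{1,2,3\}$ is a nonempty complex; hence the fiber of $C$ contains at least one monomial $\bx^{\ba_0}$ with $\ba_0\in\ZZ_{\ge 0}^n$, as claimed in the statement.

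Next I would apply \cref{lem:contractible-complexes} to the polygon $P\coloneqq P_{\ba_0}$. Comparing definitions, the vector called $\bv$ in that lemma, whose $j$th coordinate is $\max\{\bb_j\cdot\bu\mid\bu\in P\}$, is exactly the vector $\ba$ defined in the corollary. Writing $C_0\in\Gamma$ for the class containing $\ba$ and letting $\mathcal{S}$ be the set of $C'\in\Gamma$ with $P_{C'}$ equal to $P$ up to lattice translation, the lemma gives that $C_0\in\mathcal{S}$ and that $\Delta_{C'}$ is contractible for every $C'\in\mathcal{S}\setminus\{C_0\}$. Since our goal $\ba\in C$ is equivalent to $C=C_0$, it remains only to rule out $C\ne C_0$.

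To finish, observe that $C\in\mathcal{S}$, because $\ba_0\in C$ gives $P_C=P_{\ba_0}=P$. On the other hand $\Delta_C$ is homologous to $S^{i-1}$ and therefore has nonzero reduced homology, so it is not contractible (this includes the case $i=1$, where $S^0$ is disconnected). If $C$ were different from $C_0$, then $C\in\mathcal{S}\setminus\{C_0\}$, and the lemma would force $\Delta_C$ to be contractible, a contradiction. Hence $C=C_0$, i.e.\ $\ba\in C$.

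There is no serious obstacle here once the identification $\ba=\bv$ is made; the only points requiring care are the bookkeeping that the fiber of a syzygy degree is nonempty (so that $\ba_0$ exists and $P_{\ba_0}$ is defined) and the elementary topological remark that the $(i-1)$-sphere is non-acyclic for each $i\in\{1,2,3\}$, which is what converts the contractibility dichotomy of \cref{lem:contractible-complexes} into the desired equality of degrees.
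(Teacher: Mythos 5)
Your proof is correct and follows exactly the paper's argument: the paper likewise observes that $\Delta_C$ has the homology of the $(i-1)$-sphere, hence is non-contractible, and then invokes \cref{lem:contractible-complexes} to conclude $C=C_0$. You have simply spelled out the identification $\ba=\bv$ and the membership $C\in\mathcal{S}$ more explicitly than the paper does.
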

One particular consequence of this result is that if $C_1,C_2$ are each degrees of minimal syzygies, and $P_{C_1}=P_{C_2}$ (up to translations), then $C_1=C_2$.
\begin{proof}[Proof of \cref{cor:syzygy-degrees}]
By our assumption on $C$, it follows that $\Delta_C$ has the homology of the $(i-1)$-sphere, and is therefore non-contractible. The result then follows from \cref{lem:contractible-complexes}.
\end{proof}

A crucial insight in the proof of \cref{thm:reduction} is that one can reduce to the case of a curve in $\PP^3$. We briefly outline the argument from \cite{PS} here, and we fix the following notation for the remainder of the paper. 

\begin{definition}
\label{reductionprocess}
A \textit{reduction datum} is a triple $(\cL,G,Q)$ with the following properties:
\begin{itemize}
    \item $\cL \subset \ZZ^n$ is a lattice for which $I_\cL$ is not Cohen--Macaulay,
    \item $G$ is a Gale diagram for $\cL$ for which the unit square is a syzygy quadrangle, and
    \item $Q = (Q_1,Q_2,Q_3,Q_4)$ is a partition of $G$ (that is, $G=\bigsqcup_{i=1}^4 Q_i$) such that every vector in $Q_i$ lies in the $i$th closed quadrant.
\end{itemize}
Given such a reduction datum, we define the $\Bbbk$-algebra morphism $$\phi_Q \colon \Bbbk[x_1,\dots,x_n] \to \Bbbk[y_1,\dots,y_4]$$ so that $\phi_Q(x_i) = y_j$ if and only if $\bb_j\in Q_i$. Now define $$J_Q \coloneqq \phi_Q(I_\cL), \qquad I_Q \coloneqq (J_Q \colon (y_1y_2y_3y_4)^\infty)$$ to be ideals of $\Bbbk[y_1,\dots,y_4]$. The ideal $I_Q$ is a lattice ideal corresponding to a lattice $\cL_Q\subseteq \ZZ^4$. The lattice $\cL_Q$ has a canonical Gale diagram $G_Q\coloneqq\{\bb_1',\dots,\bb_4'\}$, where $\bb_i' \coloneqq \sum_{\bb \in Q_i} \bb$.
\end{definition}

\begin{remark} Several properties of the above objects should be noted.
\begin{enumerate}
\renewcommand{\theenumi}{\alph{enumi}}
    \item The map $\phi_Q$ is surjective since $G$ contains a vector in each open quadrant. This ensures that $J_Q$ is indeed an ideal.
    \item We have $\codim J_Q = 2$, so $J_Q$ defines a (possibly reducible, nonreduced) curve in $\PP^3$.
    \item The ideal $J_Q$ need not be a lattice ideal. However, it \textit{is} a lattice ideal after saturation with respect to $y_1y_2y_3y_4$ (hence the existence of $\cL_Q$ as above).
    \item It is not hard to show that $\deg J_Q = \deg I_\cL$. By general properties of saturation, it follows that $\deg I_Q \le \deg J_Q = \deg I_\cL.$ 
\end{enumerate}
\end{remark}

The following result allows us to control the regularity of $I_Q$.

\begin{lemma}\label{lem:preserve-syz-quad-and-degree}
Suppose that $[\bv,\bw]$ is a syzygy quadrangle for $I_\cL$, and that
\begin{align*}
    Q_1\subseteq\{\bu\in\RR^2\mid\text{$\bu\cdot\bv\ge 0$ and $\bu\cdot\bw\ge 0$}\},\qquad Q_2\subseteq\{\bu\in\RR^2\mid\text{$\bu\cdot\bv\le 0$ and $\bu\cdot\bw\ge 0$}\},\\
    Q_3\subseteq\{\bu\in\RR^2\mid\text{$\bu\cdot\bv\le 0$ and $\bu\cdot\bw\le 0$}\},\qquad Q_4\subseteq\{\bu\in\RR^2\mid\text{$\bu\cdot\bv\ge 0$ and $\bu\cdot\bw\le 0$}\}.
\end{align*}
Then, $[\bv,\bw]$ is a syzygy quadrangle for $I_Q$. Furthermore, the total degrees of the corresponding degrees in $\Gamma=\ZZ^n/\cL$ and $\ZZ^4/\cL_Q$ are equal.
\end{lemma}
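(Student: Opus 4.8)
The plan is to reduce everything to the support-function criterion of \cref{prop:PS-syz-quads} and to exploit that this support function is piecewise-linear with respect to the partition $Q$. Write $h(\bb)\coloneqq\max\{\bb\cdot\bu\mid\bu\in[\bv,\bw]\}$ for the support function of $[\bv,\bw]=\operatorname{conv}(\{(0,0),\bv,\bw,\bv+\bw\})$. Since a linear functional on a polygon is maximized at a vertex, $h(\bb)=\max\{0,\bb\cdot\bv,\bb\cdot\bw,\bb\cdot\bv+\bb\cdot\bw\}$, and a vertex is \emph{supported} by $\bb$ exactly when it is the strict maximizer (for instance $\bv+\bw$ is supported iff $\bb\cdot\bv>0$ and $\bb\cdot\bw>0$). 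A one-line case analysis then shows that $h$ is \emph{linear} on each of the four closed cones appearing in the hypothesis: on the cone containing $Q_1$ one has $h(\bb)=\bb\cdot(\bv+\bw)$; on that containing $Q_2$, $h(\bb)=\bb\cdot\bw$; on that containing $Q_3$, $h(\bb)=0$; and on that containing $Q_4$, $h(\bb)=\bb\cdot\bv$.

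First I would show that every vertex of $[\bv,\bw]$ is supported by a vector of $G_Q$. Since $I_\cL$ is not Cohen--Macaulay it is not a complete intersection, so \cref{prop:PS-syz-quads} applies and each vertex of the syzygy quadrangle $[\bv,\bw]$ is supported by some $\bb\in G$. The supporter of $\bv+\bw$ has $\bb\cdot\bv>0$ and $\bb\cdot\bw>0$, and the containment hypotheses rule out $Q_2,Q_3,Q_4$, forcing $\bb\in Q_1$; summing over $Q_1$ then gives $\bb_1'\cdot\bv\ge\bb\cdot\bv>0$ and likewise $\bb_1'\cdot\bw>0$, so $\bb_1'$ supports $\bv+\bw$. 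The identical argument places the supporters of $\bw$, $(0,0)$, and $\bv$ in $Q_2$, $Q_3$, and $Q_4$ respectively, and shows that $\bb_2',\bb_3',\bb_4'$ support $\bw,(0,0),\bv$, with the strict inequalities surviving the summation.

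Next I would promote this to the statement that $[\bv,\bw]$ is a syzygy quadrangle for $I_Q$. Because $\abs{\det(\bv,\bw)}=1$, the matrix $M$ with rows $\bv,\bw$ lies in $\GL_2(\ZZ)$, so $M\cdot G_Q$ is again a Gale diagram for $\cL_Q$; the strict inequalities above say that $M\bb_i'=(\bb_i'\cdot\bv,\bb_i'\cdot\bw)$ lies in the $i$th open quadrant, so $M\cdot G_Q$ meets all four open quadrants and $I_Q$ is not Cohen--Macaulay by \cref{prop:non-cm-open-quadrants}, hence not a complete intersection. Then \cref{prop:PS-syz-quads}, applied now to $I_Q$ together with the support statements of the previous paragraph, gives that $[\bv,\bw]$ is a syzygy quadrangle for $I_Q$.

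Finally, for the degree equality I would evaluate both total degrees with \cref{cor:syzygy-degrees}. That corollary identifies the syzygy degree $C\in\Gamma$ attached to $[\bv,\bw]$ with the class of $\ba$, where $a_j=\max\{\bb_j\cdot\bu\mid\bu\in P\}$ for a translate $P$ of $[\bv,\bw]$; since $\cL\perp(1,\dots,1)$ we have $\sum_j\bb_j=0$, so the total degree $\sum_j a_j$ is independent of the translate and equals $\sum_{j=1}^n h(\bb_j)$. The same computation for $I_Q$ (using $\sum_i\bb_i'=0$) gives total degree $\sum_{i=1}^4 h(\bb_i')$. Summing the linearity identities over each block, and noting that each cone is closed under addition so that $\bb_i'$ lies in it and $h$ is genuinely linear there, yields $\sum_{\bc\in Q_i}h(\bc)=h(\bb_i')$ for each $i$, whence $\sum_{j=1}^n h(\bb_j)=\sum_{i=1}^4 h(\bb_i')$. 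The main obstacle is the bookkeeping of the middle paragraphs: verifying that each vertex's supporter lands in the matching block $Q_i$ and that strictness survives summation, since this single fact is what simultaneously feeds \cref{prop:PS-syz-quads} for $I_Q$ and matches the two total degrees.
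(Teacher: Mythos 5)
Your proof is correct and takes essentially the same route as the paper's: the paper likewise deduces the syzygy-quadrangle claim directly from \cref{prop:PS-syz-quads} and the degree equality from \cref{cor:syzygy-degrees} together with the additivity $\max\{\bb_i'\cdot\bu\mid\bu\in[\bv,\bw]\}=\sum_{j\in Q_i}\max\{\bb_j\cdot\bu\mid\bu\in[\bv,\bw]\}$, which is exactly your piecewise-linearity of the support function on the four cones. Your write-up merely makes explicit the steps the paper leaves as ``follows directly'' (that each vertex's supporter lands in the matching block $Q_i$, that strictness survives summation to $\bb_i'$, and that $I_Q$ is not a complete intersection so \cref{prop:PS-syz-quads} applies to it), all of which are sound.
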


\begin{proof}
The fact that $[\bv,\bw]$ is a syzygy quadrangle for $I_{Q}$ follows directly from \cref{prop:PS-syz-quads}. Now, let $C\in\Gamma$ be such that $P_C=[\bv,\bw]$ and $I_\cL$ has a minimal third syzygy in degree $C$, and let $C'\in\ZZ^4/\cL_Q$ be such that $P_{C'}=[\bv,\bw]$ and $I_{Q}$ has a minimal third syzygy in degree $C'$. By \cref{cor:syzygy-degrees},
\[\deg C=\sum_{j=1}^n\max\{\bb_j\cdot\bu\mid\bu\in[\bv,\bw]\},\qquad\deg C'=\sum_{i=1}^4\max\{\bb_i'\cdot\bu\mid\bu\in[\bv,\bw]\}.\]
It suffices to show that
\[\max\{\bb_i'\cdot\bu\mid\bu\in[\bv,\bw]\}=\sum_{j\in Q_i}\max\{\bb_j\cdot\bu\mid\bu\in[\bv,\bw]\}\]
for all $1\le i\le 4$, but this follows from the assumptions on the $Q_i$.
\end{proof}

Fix a reduction datum $(\cL, G, Q)$ for which the unit square is a syzygy quadrangle of $I_\cL$ attaining the regularity. It follows from \cref{lem:preserve-syz-quad-and-degree} that the unit square is a syzygy quadrangle for $I_{Q}$, and the monomials corresponding to its vertices retain the same total degree. Thus, $\reg I_{Q} \ge \reg I_{\cL}$.

Similarly, it follows from \cref{cor:syzygy-degrees} that if $[\bv,\bw]$ is a syzygy quadrangle of both $I_\cL$ and $I_{Q}$, then its total degree with respect to $I_{Q}$ is at most its total degree with respect to $I_\cL$. Thus, the regularity can only \textit{strictly} increase if the $\cL\to \cL_Q$ reduction process introduces a new syzygy quadrangle.

In any case, since $\deg I_{Q}\le \deg I_\cL$ and $\reg I_\cL\le\reg I_{Q}$, if \cref{thm:reduction} holds for $n=4$, then \textit{all} of the following three inequalities hold:
\begin{subequations}
\label{reductionchain}
\begin{align}
    \reg I_\cL &\le \reg I_{Q}, \label{ineqA}\\ 
    \reg I_{Q}&\le \deg I_{Q},\label{ineqB} \\
    \deg I_{Q} &\le \deg I_\cL.\label{ineqC}
\end{align}
\end{subequations}

\noindent Therefore $\reg I_\cL \le \deg I_\cL$ and the result holds for general $n$. This is the strategy adopted by \cite{PS} to prove \cref{thm:reduction}.

By inspecting this reduction process and the inequality chain \eqref{reductionchain} in more detail, we are able to classify the possible Gale diagrams of lattice ideals $I_\cL$ satisfying $\reg I_\cL = \deg I_\cL - 1$. Observe that if $\reg I_\cL = \deg I_\cL - 1$, then inequality must occur in exactly one of \eqref{ineqA}, \eqref{ineqB}, or \eqref{ineqC}. In the next section, we will analyze the various possibilities for where equality occurs in \eqref{reductionchain}.

\section{Degree and regularity after reduction to a curve}
\label{sec:analyzingreduction}

In this section we carry out a careful analysis of the inequality chain \cref{reductionchain}. This gives the necessary tools to determine when $\reg I_\cL = \deg I_\cL - 1$ for a toric ideal of codimension 2.

\subsection{Maximal regularity implies equality in \eqref{ineqA}}
\label{sec:maxregeqA}

In this subsection, we use \cref{prop:PS-syz-quads} to compare the syzygy quadrangles of $I_Q$ to those of $I_\cL$ in the case when $\deg I_\cL=\deg I_Q$, which then lets us establish that equality holds in \eqref{ineqA} if $I_\cL$ is a toric ideal with maximal regularity.

Fix an arbitrary reduction datum $(\cL, G, Q)$. 

\begin{lemma}\label{lem:J-contains-line}
Let $J_Q$ be as in \cref{reductionprocess}. The following two statements hold.
\begin{enumerate}
    \item Any associated prime $\frp$ of $J_Q$ of codimension $2$ that contains $y_1y_2y_3y_4$ is of the form $\frp=\langle y_i,y_j\rangle$ for some distinct $i,j\in\{1,2,3,4\}$.
    \item Let $i,j\in\{1,2,3,4\}$ be distinct. Then $\langle y_i,y_j\rangle$ is an associated prime of $J_Q$ if and only if for all nonzero $\bu\in\ZZ^2$, there is some $\bb\in Q_i\cup Q_j$ such that $\bb\cdot\bu<0$. In particular, this can hold only if $\{i,j\}=\{1,3\}$ or $\{2,4\}$.
\end{enumerate}
\end{lemma}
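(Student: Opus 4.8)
The plan is to work directly with the explicit binomial generators of $J_Q$. Write $B$ for the basis matrix of $\cL$ with rows $\bb_1,\dots,\bb_n$, so that every $\ell\in\cL$ has the form $\ell=B\bu$ for a unique $\bu\in\ZZ^2$, with $\ell_k=\bb_k\cdot\bu$; let $\ell^+,\ell^-$ denote the positive and negative parts of $\ell$, and let $\psi\colon\ZZ^n\to\ZZ^4$ be the map summing coordinates within each quadrant class, so $\psi(\ba)_i=\sum_{k\in Q_i}a_k$. Since $I_\cL$ is generated by the binomials $\bx^{\ell^+}-\bx^{\ell^-}$, applying $\phi_Q$ shows $J_Q$ is generated by $\by^{\psi(\ell^+)}-\by^{\psi(\ell^-)}$ as $\bu$ ranges over $\ZZ^2$. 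The two facts I would record at the outset are this generating description, and the elementary criterion that a binomial $\by^{\alpha}-\by^{\beta}$ lies in a monomial prime $\langle y_i,y_j\rangle$ precisely when $\alpha_i+\alpha_j>0$ and $\beta_i+\beta_j>0$. I would also note, via \cref{prop:PS-syz-quads}, that each $Q_i$ is nonempty: the four vertices of the unit square are supported by Gale vectors lying in the four distinct open quadrants.

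For part (1), the key observation is that $J_Q$ has no embedded primes of codimension $2$: any associated prime of codimension $2=\codim J_Q$ cannot properly contain a minimal prime (all of which have codimension $\ge 2$), so it is itself minimal. Thus it suffices to show that any minimal prime $\frp$ containing a variable $y_i$ must contain a second variable. To produce this, I would take $\bu$ in the diagonal direction of the $i$th quadrant (say $\bu=(1,1)$ when $i=1$); then $\bb\cdot\bu>0$ for every $\bb\in Q_i$, so $\psi(\ell^+)_i>0$, while $\psi(\ell^-)_i=0$ and $\ell^-\neq 0$ because the opposite quadrant is nonempty and contributes a vector with $\bb\cdot\bu<0$. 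Hence, modulo $y_i$, this generator becomes a nonconstant monomial not involving $y_i$, so $\frp$ contains one of the remaining variables; being of codimension $2$, it equals $\langle y_i,y_j\rangle$.

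For part (2), I would first reduce ``associated'' to ``contains $J_Q$'': since $\langle y_i,y_j\rangle$ has codimension $2=\codim J_Q$, it is a minimal (hence associated) prime exactly when $J_Q\subseteq\langle y_i,y_j\rangle$, for any minimal prime of $J_Q$ sitting inside it then also has codimension $2$ and so coincides with it. Applying the membership criterion to each generator and using $\psi(\ell^+)_i+\psi(\ell^+)_j=\sum_{k\in Q_i\cup Q_j}\max(\bb_k\cdot\bu,0)$, one finds $J_Q\subseteq\langle y_i,y_j\rangle$ iff for every nonzero $\bu$ there are $\bb,\bb'\in Q_i\cup Q_j$ with $\bb\cdot\bu>0$ and $\bb'\cdot\bu<0$; replacing $\bu$ by $-\bu$ collapses this to the single stated condition. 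For the restriction to $\{1,3\}$ and $\{2,4\}$, I would observe that when $\{i,j\}$ indexes adjacent quadrants, $Q_i\cup Q_j$ lies in a closed half-plane, so a dual direction $\bu$ (e.g.\ $(0,1)$ for $\{1,2\}$) has $\bb\cdot\bu\ge 0$ throughout and the condition fails.

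The step requiring the most care is the bookkeeping in the membership criterion: correctly computing the positive- and negative-part sums over $Q_i\cup Q_j$ and verifying that imposing membership for the entire generating family $\{\ell=B\bu\}$ is genuinely equivalent to the uniform condition over all nonzero $\bu$. The structural input that makes the whole argument go through is the nonemptiness of each $Q_i$ together with the absence of codimension-$2$ embedded primes; once these are in hand, both parts follow from the single binomial-in-a-monomial-prime computation.
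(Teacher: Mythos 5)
Your proof is correct, and for part (1) it takes a genuinely different route from the paper. The paper disposes of (1) by citing Corollary 2.1 of \cite{HS} on primary decomposition of lattice basis ideals, together with a remark on why that result applies even though $J_Q$ need not be a lattice basis ideal; you instead give a self-contained argument: a codimension-$2$ associated prime of the codimension-$2$ ideal $J_Q$ is necessarily minimal, and a minimal prime containing $y_i$ must contain a second variable because the generator $\phi_Q(\bx^{\ell^+}-\bx^{\ell^-})$ for $\ell=B\bu$ with $\bu$ in the $i$th open quadrant reduces, modulo $y_i$, to a nonconstant monomial in the remaining variables. This buys independence from the external reference and isolates exactly the structural inputs needed (each $Q_i$ meets the $i$th open quadrant; no embedded codimension-$2$ primes). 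For part (2) your route coincides with the paper's: reduce ``associated'' to ``contains $J_Q$'' via the codimension count, then translate containment in $\langle y_i,y_j\rangle$ into the Gale-vector condition --- a translation the paper leaves implicit and you carry out explicitly.

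The one step you flag but do not close is genuinely the delicate one: the criterion ``$\by^{\alpha}-\by^{\beta}\in\langle y_i,y_j\rangle$ iff $\alpha_i+\alpha_j>0$ and $\beta_i+\beta_j>0$'' fails when $\alpha=\beta$, and a priori the generator attached to a nonzero $\bu$ could be the zero binomial, which would break the forward direction of your equivalence. The patch is short. Suppose $J_Q\subseteq\langle y_i,y_j\rangle$, say with $\{i,j\}=\{1,3\}$, and some nonzero $\bu_0$ has $\bb\cdot\bu_0\ge 0$ for all $\bb\in Q_1\cup Q_3$; then membership of the corresponding generator forces $\psi(\ell^+)=\psi(\ell^-)$, hence $\bb\cdot\bu_0=0$ for all $\bb\in Q_1\cup Q_3$ and also $\psi(\ell^+)_2=\psi(\ell^-)_2$. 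But $\bu_0$ is then orthogonal to a vector in the open first quadrant, so (replacing $\bu_0$ by $-\bu_0$ if necessary, which only swaps $\ell^+$ and $\ell^-$) $\bu_0$ lies in the open fourth quadrant; every vector of the closed second quadrant then pairs nonpositively with $\bu_0$, strictly so for the vector of $Q_2$ in the open quadrant, giving $\psi(\ell^+)_2=0<\psi(\ell^-)_2$, a contradiction. With this observation your argument is complete.
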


\begin{proof}
Statement (1) follows from Corollary 2.1 of \cite{HS} (though $J_Q$ is not necessarily a lattice basis ideal, we only need that the saturation of $J_Q$ with respect to $y_1y_2y_3y_4$ is a lattice ideal to apply this result). 

Statement (2) follows since $\langle y_i,y_j\rangle$ is an associated prime of $J_Q$ if and only if $\langle y_i,y_j\rangle\supseteq J_Q$, since $\codim J_Q = 2$. Translating this into a condition based on the Gale diagram gives the result.

\end{proof}

\begin{lemma}\label{lem:halfspaces}
We have that $\deg I_\cL = \deg I_{Q}$ if and only if there are two nonzero vectors $\bu_{13},\bu_{24}$ for which $$Q_1\cup Q_3 \subseteq \{\bv \in \RR^2 \mid \bv \cdot \bu_{13} \ge 0\}, \qquad Q_2\cup Q_4 \subseteq \{\bv\in \RR^2 \mid \bv \cdot \bu_{24} \ge 0\}.$$
\end{lemma}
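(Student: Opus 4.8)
The plan is to reinterpret the equality $\deg I_\cL = \deg I_Q$ as a statement about which associated primes are destroyed by the saturation $I_Q = (J_Q \colon (y_1y_2y_3y_4)^\infty)$, and then to read off the half-space condition directly from \cref{lem:J-contains-line}. Recall from the remarks following \cref{reductionprocess} that $\deg J_Q = \deg I_\cL$; hence the claim is equivalent to deciding exactly when $\deg I_Q = \deg J_Q$.

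First I would set up the degree bookkeeping. The multiplicity $\deg J_Q$ is computed as a sum of local multiplicities over the codimension-$2$ minimal primes of $J_Q$, while passing from $J_Q$ to its saturation $I_Q$ deletes precisely those primary components whose associated prime contains $y_1y_2y_3y_4$ (equivalently, contains some $y_k$). So the degree is unchanged if and only if none of the deleted components is a codimension-$2$ minimal prime. I would then note that every codimension-$2$ associated prime of $J_Q$ is automatically minimal: an embedded prime properly contains a minimal prime and hence has strictly larger codimension, but every minimal prime of $J_Q$ has codimension at least $\codim J_Q = 2$. Consequently $\deg I_Q = \deg J_Q$ if and only if no codimension-$2$ associated prime of $J_Q$ contains $y_1y_2y_3y_4$. (In one direction, removing any such codimension-$2$ prime strictly drops the degree since its contribution is positive; in the other, the only deleted components are then of codimension $\ge 3$ and so do not affect the multiplicity.)

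Next I would invoke \cref{lem:J-contains-line}. By part (1), any codimension-$2$ associated prime of $J_Q$ containing $y_1y_2y_3y_4$ has the form $\langle y_i, y_j\rangle$, and by part (2) only the pairs $\{i,j\}=\{1,3\}$ and $\{2,4\}$ can occur. Thus the condition becomes: neither $\langle y_1, y_3\rangle$ nor $\langle y_2, y_4\rangle$ is an associated prime of $J_Q$. Negating the criterion of \cref{lem:J-contains-line}(2), the prime $\langle y_1, y_3\rangle$ fails to be associated exactly when there is a nonzero $\bu_{13}$ with $\bb \cdot \bu_{13} \ge 0$ for every $\bb \in Q_1 \cup Q_3$, that is, $Q_1 \cup Q_3 \subseteq \{\bv \mid \bv \cdot \bu_{13} \ge 0\}$, and symmetrically for $\langle y_2, y_4\rangle$ and a vector $\bu_{24}$. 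Taking the two conditions together yields precisely the asserted half-space statement.

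A minor point to dispatch is that \cref{lem:J-contains-line}(2) quantifies over $\bu \in \ZZ^2$, whereas the half-spaces in the statement are phrased in $\RR^2$; these are interchangeable, since the set of $\bu$ satisfying $\bb \cdot \bu \ge 0$ for all $\bb$ in a finite set is a rational polyhedral cone, which contains a nonzero real vector if and only if it contains a nonzero integer vector. I expect the only step requiring genuine care to be the degree-preservation argument in the second paragraph: one must be certain that the multiplicity sees only the top-dimensional components, that these coincide with the codimension-$2$ associated primes, and that deleting any $\langle y_i,y_j\rangle$ therefore strictly decreases $\deg J_Q$. Once that is nailed down, the remainder is a direct translation through \cref{lem:J-contains-line}.
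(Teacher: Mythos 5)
Your proposal is correct and follows essentially the same route as the paper: reduce to $\deg I_Q = \deg J_Q$, observe that this holds if and only if no codimension-$2$ associated prime of $J_Q$ contains $y_1y_2y_3y_4$, and then translate via \cref{lem:J-contains-line} into the half-space condition on the Gale vectors. The paper states these steps more tersely; your extra care with the multiplicity bookkeeping and the $\ZZ^2$ versus $\RR^2$ point fills in details the paper leaves implicit.
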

\begin{proof}
Recall that $I_{Q}$ is a lattice ideal that equals the saturation $(J_Q:(y_1y_2y_3y_4)^\infty)$. It follows that $\deg I_{Q} = \deg J_Q$ if and only if no codimension-$2$ associated prime of $J_Q$ contains $\langle y_1y_2y_3y_4\rangle$, so by \cref{lem:J-contains-line}, we see that $\deg I_{Q}=\deg J_Q$ if and only if $J_Q$ does not have an associated prime of the form $\langle y_i,y_j\rangle$ for distinct $i,j\in\{1,2,3,4\}$. The desired result then follows from \cref{lem:J-contains-line} and the fact that $\deg J_Q = \deg I_\cL$.
\end{proof}

\begin{lemma}
\label{lem:eqdeg}
If $\deg I_\cL=\deg I_{Q}$, then every syzygy quadrangle of $I_{Q}$ (with respect to $G_{Q}$) is a syzygy quadrangle of $I_\cL$ (with respect to $G$).
\end{lemma}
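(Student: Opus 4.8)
The plan is to reduce the whole statement to the support characterization of syzygy quadrangles in \cref{prop:PS-syz-quads}, which applies to both ideals involved: $I_\cL$ is not a complete intersection because it is not Cohen--Macaulay, and if $I_Q$ has a syzygy quadrangle $[\bv,\bw]$ at all (otherwise there is nothing to prove), then $I_Q$ is not a complete intersection either, since a codimension-$2$ complete intersection has projective dimension $2$ and hence no third syzygies. So fix a syzygy quadrangle $[\bv,\bw]$ of $I_Q$. By \cref{prop:PS-syz-quads} applied to $I_Q$ with Gale diagram $G_Q=\{\bb_1',\dots,\bb_4'\}$, each of its four vertices is supported by some $\bb_i'$; applying \cref{prop:PS-syz-quads} to $I_\cL$, it then suffices to prove that each vertex of $[\bv,\bw]$ is also supported by some genuine Gale vector $\bb\in G$.

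The key translation is that whether a vector $\bb$ supports the vertex $0$, $\bv$, $\bw$, or $\bv+\bw$ depends only on the signs of the pair $(\bb\cdot\bv,\bb\cdot\bw)$: the four vertices correspond exactly to the four open sign-quadrants of the linear map $\bb\mapsto(\bb\cdot\bv,\bb\cdot\bw)$. Since $\bb_i'=\sum_{\bb\in Q_i}\bb$ and this map is linear, I would argue by contradiction: suppose some vertex, say $\bv+\bw$, is supported by no $\bb\in G$, so every Gale vector satisfies $\bb\cdot\bv\le 0$ or $\bb\cdot\bw\le 0$. I want to deduce that no $\bb_i'$ supports it either, contradicting that $[\bv,\bw]$ is a syzygy quadrangle of $I_Q$. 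Unwinding the sign bookkeeping, the only way the sum $\bb_i'$ can land strictly in the ``$++$'' quadrant while every summand avoids it is if $Q_i$ contains both a vector with $(\bb\cdot\bv<0,\ \bb\cdot\bw>0)$ and a vector with $(\bb\cdot\bv>0,\ \bb\cdot\bw<0)$ --- that is, $Q_i$ must \emph{straddle} the two lines $\{\bb\cdot\bv=0\}$ and $\{\bb\cdot\bw=0\}$ so that cancellation manufactures a spurious support. The entire lemma therefore comes down to excluding this straddling.

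To exclude straddling I would feed in the hypothesis $\deg I_\cL=\deg I_Q$ through \cref{lem:halfspaces}, which produces nonzero $\bu_{13},\bu_{24}$ with $Q_1\cup Q_3\subseteq\{\bv\cdot\bu_{13}\ge 0\}$ and $Q_2\cup Q_4\subseteq\{\bv\cdot\bu_{24}\ge 0\}$. Reinterpreted angularly, these half-space constraints say precisely that within the first quadrant the vectors of $Q_1$ and those of $-Q_3$ occupy non-overlapping angular sectors, and similarly for $Q_2$ and $-Q_4$ in the second quadrant. Combined with the fact that each $Q_i$ already lies in a single coordinate quadrant, this pins down the position of $Q_i$ relative to the lines $\{\bb\cdot\bv=0\}$ and $\{\bb\cdot\bw=0\}$ tightly enough to forbid the straddle. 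I would also exploit that the Gale vectors sum to zero (the columns of $B$ are orthogonal to $(1,\dots,1)$) and that, since the $\bb_i'$ support four distinct vertices and each $\bb_i'$ lies in the closed $i$th quadrant, they occur in the cyclic order forced by their quadrants, which further constrains where the two lines can sit.

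The hard part will be exactly this straddle-exclusion. Neither the quadrant-membership of $Q_i$ nor the half-space condition alone rules out cancellation --- a cone of opening just under $180^\circ$ can wrap around a coordinate line in $\RR^2$ --- so the argument must genuinely combine both inputs, and it splits into cases according to how the lines $\{\bb\cdot\bv=0\}$ and $\{\bb\cdot\bw=0\}$ meet the four quadrants. A secondary technical nuisance is the boundary degeneracy in which a Gale vector lies exactly on one of these lines (supporting an edge rather than a vertex); I would handle these by checking that such a vector still forces the required strict support to come from a neighboring quadrant, rather than allowing a vertex to go unsupported.
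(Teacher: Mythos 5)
Your plan is the same as the paper's: apply \cref{prop:PS-syz-quads} to both ideals, observe that a syzygy quadrangle of $I_Q$ that is not one of $I_\cL$ forces some $Q_i$ to ``straddle'' (i.e.\ $\bb_i'$ supports a vertex that no individual vector of $Q_i$ supports), and rule that out using the half-plane condition of \cref{lem:halfspaces}. The setup is correct, but the step you defer as ``the hard part'' is where the entire proof lives, and the paper closes it in one move rather than by the case analysis you anticipate. First it normalizes: translate so the quadrangle is $[\bv,\bw]$ with $\det(\bv,\bw)=1$ and $\bv,\bw$ both in the first (or both in the second) closed quadrant; then every vector of $G$ in the first open quadrant supports $\bv+\bw$ and every vector in the third open quadrant supports $0$, so the straddle can only occur in $Q_2$ or $Q_4$. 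Say it occurs in $Q_2$: since $\bb_2'\cdot\bv<0<\bb_2'\cdot\bw$ while each $\bb\in Q_2$ has $\bb\cdot\bv\ge 0$ or $\bb\cdot\bw\le 0$, there exist nonzero $\bc,\bd\in Q_2$ with $\bc\cdot\bv\ge 0$ and $\bd\cdot\bw\le 0$ (note these are weak inequalities, not the strict ones you wrote --- the boundary degeneracy you flag is real and must be carried along). The punchline is that $\bc$, $\bd$, and $\bb_4'$ --- the third vector being the one your sketch omits --- cannot all lie in a single closed half-plane with boundary through the origin, whereas \cref{lem:halfspaces} puts all three in one, since $\bc,\bd\in Q_2$ and $\bb_4'$ is a sum of vectors of $Q_4$. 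Bringing in $\bb_4'$ is the missing ingredient: the half-space condition and quadrant membership of $Q_2$ alone do not exclude the straddle (as you correctly suspect), but adding the opposite quadrant's summed Gale vector does, with no splitting into cases on how the lines $\{\bb\cdot\bv=0\}$ and $\{\bb\cdot\bw=0\}$ sit relative to the coordinate quadrants.
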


\begin{proof}
Suppose that $I_{Q}$ has a syzygy quadrangle that is not a syzygy quadrangle for $I_\cL$. Without loss of generality, after translating we may assume that this syzygy quadrangle equals $[\bv,\bw]$ for some vectors $\bv,\bw$ with nonnegative $y$-coordinates. Since $\abs{\det(\bv,\bw)}=1$, after translating again if necessary, we may assume that $\bv,\bw$ both lie in the first closed quadrant or both lie in the second closed quadrant. Suppose without loss of generality that $\det(\bv,\bw)=1$.

Assume that $\bv,\bw$ both lie in the first closed quadrant. The case where $\bv,\bw$ both lie in the second closed quadrant is similar. By \cref{prop:PS-syz-quads}, each vertex of the syzygy quadrangle is supported by some vector in $G_{Q}$. Thus there exist vectors $\bc_1',\bc_2',\bc_3',\bc_4'\in G_{Q}$ such that $\bc_1'\cdot\bv,-\bc_2'\cdot\bv,-\bc_3'\cdot\bv,\bc_4'\cdot\bv>0$ and $\bc_1'\cdot\bw,\bc_2'\cdot\bw,-\bc_3'\cdot\bw,-\bc_4'\cdot\bw>0$. Since $G_{Q}$ contains exactly four vectors, one in each open quadrant, we necessarily have that $\bc_i'$ is the unique vector of $G_{Q}$ that lies in the $i$th open quadrant, that is, $\bc_i'=\bb_i'$. Since $[\bv,\bw]$ is not a syzygy quadrangle for $I_\cL$, the Gale diagram $G$ does not contain an analogous collection of four vectors $\bc_1,\bc_2,\bc_3,\bc_4$ satisfying the same inequalities as above. Since $G$ contains a vector in each open quadrant, we see that there exist $\bc_1,\bc_3$ satisfying the conditions. Thus, there either does not exist $\bc_2\in G$ such that $-\bc_2\cdot\bv,\bc_2\cdot\bw>0$, or there does not exist $\bc_4\in G$ such that $\bc_4\cdot\bv,-\bc_4\cdot\bw>0$. Assume the first case holds; the second case is similar. This implies that each $\bb\in Q_2$ either satisfies $\bb\cdot\bv\ge 0$ or $\bb\cdot\bw\le 0$. Since $\sum_{\bb\in Q_2}\bb=\bb_2'$, we see that there must exist some nonzero $\bc\in Q_2$ such that $\bc\cdot\bv\ge 0$, and there exists some nonzero $\bd\in Q_2$ such that $\bd\cdot\bw\le 0$. But these conditions prevent the nonzero vectors $\bc,\bd,\bb_4'$ from all lying in a single closed half-plane with boundary passing through the origin, contradicting \cref{lem:halfspaces}, which asserts the existence of some nonzero $\bu\in\ZZ^2$ such that $0\le\bu\cdot\bc,\bu\cdot\bd$ and $0\le\bu\cdot\sum_{\bb\in Q_4}\bb=\bu\cdot\bb_4'$.
\end{proof}

We can now prove our first key result.

\begin{corollary}
\label{cor:eqreg}
If $\reg I_\cL \ge \deg I_\cL - 1$, then $\reg I_{Q} = \reg I_\cL$.
\end{corollary}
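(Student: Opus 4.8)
The plan is to prove \cref{cor:eqreg} by showing that the chain of inequalities \eqref{reductionchain} collapses appropriately under the hypothesis $\reg I_\cL \ge \deg I_\cL - 1$. Recall from the discussion preceding \cref{sec:analyzingreduction} that we always have the three inequalities $\reg I_\cL \le \reg I_Q$ (this is \eqref{ineqA}), $\reg I_Q \le \deg I_Q$ (\eqref{ineqB}, which is \cref{thm:reduction} applied to the codimension-$2$ non-Cohen--Macaulay curve $I_Q$ in $\PP^3$), and $\deg I_Q \le \deg I_\cL$ (\eqref{ineqC}). Chaining these gives $\reg I_\cL \le \reg I_Q \le \deg I_Q \le \deg I_\cL$. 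Combining with the hypothesis $\reg I_\cL \ge \deg I_\cL - 1$, the quantities $\reg I_\cL$, $\reg I_Q$, $\deg I_Q$, and $\deg I_\cL$ all lie within an interval of length $1$, so at most one of the three inequalities \eqref{ineqA}, \eqref{ineqB}, \eqref{ineqC} can be strict. The goal is to rule out the possibility that \eqref{ineqA} is the strict one, i.e.\ to show $\reg I_\cL = \reg I_Q$.

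The key mechanism is \cref{lem:eqdeg}, which asserts that if $\deg I_\cL = \deg I_Q$ then every syzygy quadrangle of $I_Q$ is already a syzygy quadrangle of $I_\cL$. First I would argue that the hypothesis forces $\deg I_Q = \deg I_\cL$: suppose instead that \eqref{ineqC} were strict, so $\deg I_Q \le \deg I_\cL - 1$. Then from $\reg I_\cL \ge \deg I_\cL - 1 \ge \deg I_Q \ge \reg I_Q \ge \reg I_\cL$ we would force every inequality in sight to be an equality, in particular $\reg I_\cL = \reg I_Q$, which is exactly the desired conclusion. So the only case requiring genuine work is when $\deg I_Q = \deg I_\cL$, and there we may invoke \cref{lem:eqdeg}.

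Assuming $\deg I_\cL = \deg I_Q$, the regularity of a codimension-$2$ non-Cohen--Macaulay lattice ideal is controlled by its syzygy quadrangles (via \cite{PS}, as recalled through \cref{cor:syzygy-degrees} and the surrounding discussion): the regularity is read off from the total degrees of the third-syzygy degrees $C$ with $P_C$ a syzygy quadrangle. By \cref{lem:eqdeg}, every syzygy quadrangle of $I_Q$ is a syzygy quadrangle of $I_\cL$, so $\reg I_Q$ is attained on a quadrangle that is also present for $I_\cL$. Combined with \cref{lem:preserve-syz-quad-and-degree}, which guarantees that a common syzygy quadrangle carries the same total degree for $I_\cL$ and $I_Q$ (under the quadrant-partition hypotheses built into a reduction datum), this pins down $\reg I_Q \le \reg I_\cL$. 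Together with the standing inequality $\reg I_\cL \le \reg I_Q$ from \eqref{ineqA}, we conclude $\reg I_\cL = \reg I_Q$.

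The main obstacle I anticipate is the bookkeeping in the last paragraph: one must check carefully that the syzygy quadrangle of $I_Q$ achieving its regularity, once identified as a syzygy quadrangle of $I_\cL$ by \cref{lem:eqdeg}, actually contributes the \emph{same} total degree to $\reg I_\cL$, and that no \emph{other} syzygy quadrangle of $I_\cL$ could give a strictly larger total degree than $\reg I_Q + 1$ in a way that breaks the argument. The former is handled by \cref{lem:preserve-syz-quad-and-degree} together with the fact that the relevant degrees agree via \cref{cor:syzygy-degrees} (which expresses the total degree of a minimal third syzygy purely in terms of the support functions $\max\{\bb_j\cdot\bu\mid\bu\in[\bv,\bw]\}$ of the Gale vectors over the quadrangle). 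The latter is precisely the content of the already-established inequality $\reg I_\cL \le \reg I_Q$, so no new work is needed there. Thus the crux is simply to assemble the collapse of the inequality chain with the correct application of \cref{lem:eqdeg} and \cref{lem:preserve-syz-quad-and-degree}.
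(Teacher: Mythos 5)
Your proof is correct and follows essentially the same route as the paper: both arguments hinge on \cref{lem:eqdeg} together with the fact (recalled in \cref{sec:PSrecall} from \cref{cor:syzygy-degrees}) that a syzygy quadrangle common to $I_\cL$ and $I_Q$ has total degree with respect to $I_Q$ at most its total degree with respect to $I_\cL$, combined with the collapse of the inequality chain \eqref{reductionchain}; the paper merely phrases this as a contrapositive (if $\reg I_Q>\reg I_\cL$ then a new syzygy quadrangle appears, so $\deg I_Q<\deg I_\cL$ and $\reg I_\cL\le\deg I_\cL-2$). One small imprecision: \cref{lem:preserve-syz-quad-and-degree} does not give equality of total degrees for an \emph{arbitrary} common syzygy quadrangle (its hypotheses on the partition are only automatic for the unit square), but the one-sided inequality from \cref{cor:syzygy-degrees} is all your argument actually needs, and it goes in the right direction.
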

\begin{proof}
If $\reg I_{Q} > \reg I_\cL$, then as stated in \cref{sec:PSrecall}, there is a syzygy quadrangle of $I_{Q}$ that is not a syzygy quadrangle of $I_\cL$. Then \cref{lem:eqdeg} implies that $\deg I_{Q} < \deg I_\cL$. But then $\reg I_\cL \le \deg I_{\cL} - 2$, a contradiction.
\end{proof}

\subsection{Existence of a reduction achieving equality in \eqref{ineqB}}
\label{sec:reg=degreduction}

The following lemma allows us to prove our next result, \cref{prop:regdeg}, which says that if $\reg I_\cL=\deg I_\cL-1$ and certain nonrestrictive technical conditions are satisfied, then there exists some partition $Q$ for which $\reg I_Q=\deg I_Q$.

\begin{lemma}
\label{lem:galediagrams}
Suppose that $\cL\subseteq \ZZ^4$ with $\reg I_\cL = \deg I_\cL - 1$, and let $G$ be a Gale diagram for $\cL$ such that the unit square is a syzygy quadrangle attaining the regularity. Then $G$ either lies on two lines or has the following form, up to dihedral symmetries: $$G = \{(1,a),(-1,d-1),(-1,1-a),(1,-d)\}$$ for some $a,d > 1$.
\end{lemma}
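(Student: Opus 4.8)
The plan is to make the degree and regularity of $I_\cL$ completely explicit for a rank-$2$ lattice $\cL\subseteq\ZZ^4$ and to reduce the statement to an elementary Diophantine analysis. First I would normalize the Gale diagram. Since the unit square is a syzygy quadrangle, \cref{prop:PS-syz-quads} forces $G$ to consist of exactly four vectors, one in each open quadrant; label them $\bb_i=(x_i,y_i)$ with $\bb_i$ in the $i$th quadrant. Because $\cL$ is orthogonal to $(1,1,1,1)$, the columns of a basis matrix each sum to zero, so $\bb_1+\bb_2+\bb_3+\bb_4=0$. The remaining coordinate freedom preserving this picture is generated by the dihedral symmetries of the square (rotating and reflecting the quadrants), which is exactly the ``up to dihedral symmetries'' clause.

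Next I would record the two relevant invariants in closed form. By \cref{cor:syzygy-degrees}, the minimal third syzygy of $S/I_\cL$ supported on the unit square has total degree
\[
D:=\sum_{i=1}^4\max_{\bu\in[(1,0),(0,1)]}\bb_i\cdot\bu=x_1+y_1+y_2+x_4,
\]
and since this syzygy attains the regularity we get $\reg I_\cL=D-2$. For the degree, a direct computation---using the monomial-curve degree formula of \cref{sec:curves} together with the index identity $\deg I_\cL=[\cL^{\mathrm{sat}}:\cL]\cdot\deg I_{\cL^{\mathrm{sat}}}$, and the fact that pairwise determinants of Gale vectors scale by the index---gives $\deg I_\cL=\max_{i\ne j}\abs{\det(\bb_i,\bb_j)}$. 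Hence the hypothesis $\reg I_\cL=\deg I_\cL-1$ becomes the single equation
\[
\max_{i\ne j}\abs{\det(\bb_i,\bb_j)}=x_1+y_1+y_2+x_4-1=D-1.
\]

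With this in hand I would split into cases using the two ``diagonal'' determinants $\det(\bb_1,\bb_3)$ and $\det(\bb_2,\bb_4)$, which measure the failure of $G$ to lie on two lines through the origin. The relation $\sum_i\bb_i=0$ shows that both vanish precisely when $\bb_3=-\bb_1$ and $\bb_4=-\bb_2$, i.e. when $G$ lies on two lines---this is the first alternative. So assume at least one diagonal determinant is nonzero. The decisive step is to prove that $\abs{x_i}=1$ for all $i$: the quadrant constraints give $x_1,x_4\ge 1$ and $x_2,x_3\le -1$ (with the $y_i$ analogous), and I would show that if some $\abs{x_i}\ge 2$ then $\max_{i\ne j}\abs{\det(\bb_i,\bb_j)}\ge D$, violating the displayed equation. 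Once $\abs{x_i}=1$ for all $i$, the sign constraints and $\sum_i\bb_i=0$ force $x_1=x_4=1$ and $x_2=x_3=-1$; the displayed equation then collapses to $\det(\bb_1,\bb_3)=\det(\bb_2,\bb_4)=1$, and writing $y_1=a$, $-y_4=d$ yields $G=\{(1,a),(-1,d-1),(-1,1-a),(1,-d)\}$, with $a,d>1$ forced by membership in the open quadrants. This is the second alternative.

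The main obstacle is the ``$\abs{x_i}=1$'' estimate. One route is purely algebraic: expand each $\abs{\det(\bb_i,\bb_j)}$ and the linear form $D$ in the coordinates and, using $y_1,y_2,-y_3,-y_4\ge 1$ and $x_1,x_4,-x_2,-x_3\ge 1$, exhibit $\max_{i\ne j}\abs{\det(\bb_i,\bb_j)}-(D-1)$ as a nonnegative combination of the quantities $\abs{x_i}-1$ and the absolute diagonal determinants, vanishing only in the two configurations above. A more geometric alternative is to produce, whenever some $\abs{x_i}\ge 2$, an explicit competing primitive parallelogram that is also a syzygy quadrangle (checked via \cref{prop:PS-syz-quads}) whose total degree is at least $D$, contradicting that the unit square attains the regularity. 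Either way the difficulty is bookkeeping---tracking which pair realizes $\max\abs{\det(\bb_i,\bb_j)}$ and which parallelograms qualify as syzygy quadrangles for a given angular arrangement---rather than any conceptual subtlety.
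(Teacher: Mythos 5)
Your reduction to the Diophantine equation $\max_{i\ne j}\abs{\det(\bb_i,\bb_j)}=D-1$ (where $D=x_1+y_1+y_2+x_4$) is a genuinely different route from the paper's, but the step you call decisive is false as stated, and it is exactly where the content of the lemma lives. Consider the Gale diagram $G'=\{(3,1),(-2,1),(-2,-1),(1,-1)\}$, the image of $\{(1,3),(-1,1),(-1,-2),(1,-2)\}$ (the claimed form with $a=3$, $d=2$) under the reflection $(x,y)\mapsto(y,x)\in\GL_2(\ZZ)$. This is a Gale diagram of the lattice of the monomial curve $(0,1,4,5)$, which by \cref{prop:toric-curves-iff} satisfies $\reg I_\cL=4=\deg I_\cL-1$, and the unit square is a syzygy quadrangle attaining the regularity, with $D=3+1+1+1=6$. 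Here $G'$ does not lie on two lines, three of its vectors have $\abs{x_i}\ge 2$, and yet $\max_{i\ne j}\abs{\det(\bb_i,\bb_j)}=5=D-1<D$. So ``some $\abs{x_i}\ge 2$ forces the maximal determinant to be at least $D$'' is simply wrong: the dihedral freedom cannot be spent in advance to assume all $\abs{x_i}=1$, because that normalization is available only once you already know the conclusion. What you actually need is the symmetric dichotomy ``two lines, or all $\abs{x_i}=1$, or all $\abs{y_i}=1$,'' and proving it is precisely the case analysis you dismiss as bookkeeping; that analysis is the substance of the paper's proof (which normalizes $b_2\le d_2$ by a rotation, runs a three-step chain of inequalities ending in $\abs{\det(\bd,\ba)}\le\deg I_\cL$, and treats separately the three places where the unit jump can occur). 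Neither fallback you offer repairs this. The quantity $\max_{i\ne j}\abs{\det(\bb_i,\bb_j)}-(D-1)$ equals $-1$ on some two-lines diagrams (e.g.\ $\{(1,2),(-1,1),(-1,-2),(1,-1)\}$, where the maximum is $3$ and $D=5$), so it cannot be written as a nonnegative combination of nonnegative quantities vanishing only on the allowed configurations. And a competing syzygy quadrangle of total degree \emph{at least} $D$ gives no contradiction with the unit square attaining the regularity, since ties are allowed; you would need total degree strictly greater than $D$.

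There is also a secondary gap: the exact formula $\deg I_\cL=\max_{i\ne j}\abs{\det(\bb_i,\bb_j)}$ on which your whole reduction rests is true but nowhere available in the paper, whose proof only uses the one-sided bound $\abs{\det(\bb_i,\bb_j)}\le\deg I_\cL$. Your justification appeals to the curve formula of \cref{sec:curves}, which is stated there only for saturated, nondegenerate curves with strictly increasing exponents, together with the unproved index identity $\deg I_\cL=[\cL^{\mathrm{sat}}:\cL]\cdot\deg I_{\cL^{\mathrm{sat}}}$; the latter needs an argument valid over an arbitrary field (for instance via the characteristic-independence of the Hilbert function of $S/I_\cL$), not just the decomposition into translated tori that works in characteristic $0$. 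These ingredients are fillable, but as written they are assertions rather than proofs, so even granting the (false) decisive step the argument would not be complete.
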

\begin{proof}
Let $C\in \ZZ^4/\cL$ be the multidegree for which $P_C$ is the unit square. Then $\reg I_\cL = \deg C - 2$ since syzygy quadrangles correspond to third syzygies \cite{PS}*{Theorem 3.4}. We start in the same spirit as \cite{PS}, Proposition 7.7. Suppose the Gale vectors are given by $$\ba = (a_1,a_2), \quad \bb = (-b_1,b_2), \quad \bc = (-c_1,-c_2), \quad \bd=(d_1,-d_2),$$
where $a_i,b_i,c_i,d_i > 0$ for all $i=1,2$. By rotating $180^\circ$ if necessary, we can without loss of generality assume that $b_2\le d_2$.

We now have the following chain of comparisons:
\begin{align*}
    \deg I_\cL - 1 = \reg I_\cL &= \deg C - 2\\
    &= a_1+b_2+a_2+d_1-2\\
    &\le (a_1 + d_2-1) + (a_2 + d_1-1) \tag{1}\\
    &\le a_1d_2 + a_2d_1\tag{2}\\
    &= \abs{\det(\bd,\ba)}\\
    &\le \deg I_\cL. \tag{3}
\end{align*}
There are three inequalities, and hence three possibilities for where the jump by 1 can occur. We treat these case by case.\\

\noindent \textit{Case 1: }Inequality occurs at (1).
In this case we have the following: $$d_2 = b_2+1, \quad (a_1-1)(d_2-1)=(a_2-1)(d_1-1)=0, \quad \abs{\det(\bd,\ba)} = \deg I_\cL.$$ Since $d_2=b_2+1$, we also have $c_2=a_2-1$.
If $a_2=1$ then $c_2=0$, so we conclude that $d_1=1$ and $a_2 > 1$. Similarly if $d_2=1$ then $b_2=0$ so we conclude that $a_1=1$ and $b_2 > 1$. Since $b_1+c_1=a_1+d_1=2$, we conclude that $b_1=c_1=1$. Thus the Gale vectors have the form $$\ba = (1,a_2), \quad \bb = (-1,d_2-1), \quad\bc= (-1,1-a_2), \quad \bd = (1,-d_2).$$ 

\noindent \textit{Case 2: }Inequality occurs at (2).
In this case we have the following: $$d_2 = b_2, \quad (a_1-1)(d_2-1)+(a_2-1)(d_1-1)=1, \quad \abs{\det(\bd,\ba)} = \deg I_\cL.$$
Now we have $c_2=a_2$. 
\begin{itemize}
    \item \textit{Case 2.1:} $a_1=1$ and $a_2=d_1=2$. Then the Gale vectors have the form $$\ba = (1,2), \quad \bb = (-b_1,d_2), \quad \bc = (-c_1, -2), \quad \bd = (2,-d_2).$$ We then also have $$4+c_1d_2 = \abs{\det(\bc,\bd)} \le \deg I_\cL = \abs{\det(\ba,\bd)} = 4+d_2,$$ forcing $c_1=1$. We conclude that $b_1=2$, so $\bb+\bd=\ba+\bc=0$.
    \item \textit{Case 2.2:} $d_2=1$ and $a_2=d_1=2$. Then the Gale vectors have the form $$\ba = (a_1,2), \quad \bb = (-b_1,1), \quad \bc = (-c_1,-2), \quad \bd = (2,-1).$$ Then we again have $$4+c_1 = \abs{\det(\bc,\bd)} \le \deg I_\cL = \abs{\det(\ba,\bd)} = 4+a_1,$$ so $c_1\le a_1$. By the same inequality with $\abs{\det(\ba,\bb)}$, we obtain $b_1\le 2$. Then $$2+a_1=b_1+c_1\le 2+c_1,$$ so $a_1\le c_1$. We conclude that $a_1=c_1$. So $b_1=2$ and again $\bb+\bd = \ba+\bc$.
    \item \textit{Case 2.3:} $a_1=d_2=2$ and $a_2=1$. Proceed as in Case 2.1 to conclude that $G$ lies on two lines.
    \item \textit{Case 2.4:} $a_1=d_2=2$ and $d_1=1$. Proceed as in Case 2.2 to conclude that $G$ lies on two lines.
\end{itemize}

\noindent \textit{Case 3: }Inequality occurs at (3).
In this case we have the following: $$d_2 = b_2, \quad (a_1-1)(d_2-1)=(a_2-1)(d_1-1)=0, \quad \abs{\det(\bd,\ba)} = \deg I_\cL-1.$$
We have $a_2=c_2$ once again. 
\begin{itemize}
    \item \textit{Case 3.1:} $a_1=a_2=1$. Then the Gale vectors have the form $$\ba = (1,1), \quad \bb = (-b_1,d_2), \quad \bc = (-c_1,-1), \quad \bd = (d_1,-d_2).$$ We now have $$c_1d_1+d_2 = \abs{\det(\bc,\bd)} \le \deg I_\cL = \abs{\det(\bd,\ba)}+1 = d_1+d_2+1.$$ Then either $c_1=1$ or $(c_1,d_1)=(2,1)$. In the former case, $d_1=b_1$ and the Gale diagram lies on two lines. In the latter case, $b_1=2$ the Gale diagram must be $$\ba = (1,1), \quad \bb = (-2,d_2), \quad \bc = (-2,-1), \quad \bd = (1,-d_2).$$ But now we see that $$2d_2+2=\abs{\det(\bb,\bc)} \le \deg I_\cL = \abs{\det(\bd,\ba)}+1 = d_2+2,$$ which is absurd since $d_2 > 0$, so $(c_1,d_1)=(2,1)$ does not occur.
    \item \textit{Case 3.2:} $a_1=d_1=1$. Since $b_1+c_1=a_1+d_1=2$, we conclude that $b_1=c_1=1$. Then the Gale vectors have the form $$\ba = (1,a_2), \quad \bb = (-1,d_2), \quad \bc = (-1,-a_2), \quad \bd = (1,-d_2),$$ and so lie on two lines.
    \item \textit{Case 3.3:} $a_2=d_2=1$. Proceed as in Case 3.2 to conclude that $G$ lies on two lines.
    \item \textit{Case 3.4:} $d_1=d_2=1$. Proceed as in Case 3.1 to conclude that $G$ lies on two lines.
\end{itemize}
\end{proof}

We now prove our second key result. 
\begin{proposition}
\label{prop:regdeg}
Suppose $n\ge 5$, and fix a reduction datum $(\cL, G, Q)$ with the following properties:
\begin{enumerate}
    \item $\reg I_\cL = \deg I_\cL - 1$,
    \item $\reg I_Q = \deg I_Q - 1$,
    \item the unit square is a syzygy quadrangle attaining the regularity of $I_\cL$,
    \item $G$ consists only of nonzero vectors, and
    \item $G$ is not contained in two lines.
\end{enumerate} Then, there is a different choice of partition $R$ of $G$ for which $\reg I_{R}=\deg I_{R}$.
\end{proposition}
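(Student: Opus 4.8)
The plan is to transfer the hypotheses to the reduced lattice $\cL_Q\subseteq\ZZ^4$, apply \cref{lem:galediagrams} there to pin down $G_Q$, and then build $R$ so that the reduced ideal $I_R$ becomes a \emph{degenerate} complete intersection (the only mechanism by which a curve in $\PP^3$ can satisfy $\reg=\deg$).

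First I would record the numerical consequences of the hypotheses. Since $\reg I_\cL=\deg I_\cL-1$, \cref{cor:eqreg} gives $\reg I_Q=\reg I_\cL$, and combined with hypothesis (2) this forces $\deg I_Q=\deg I_\cL$. Moreover $I_Q=I_{\cL_Q}$ is a codimension-$2$ lattice ideal in $\ZZ^4$ for which the unit square is a syzygy quadrangle (by \cref{lem:preserve-syz-quad-and-degree} and hypothesis (3)); hence $I_Q$ is not a complete intersection and so not Cohen--Macaulay, and because the total degree of the unit square is preserved under reduction it attains $\reg I_Q$. As $\reg I_{\cL_Q}=\deg I_{\cL_Q}-1$, \cref{lem:galediagrams} applies to $\cL_Q$, so $G_Q=\{\bb_1',\bb_2',\bb_3',\bb_4'\}$ either lies on two lines or equals $\{(1,a),(-1,d-1),(-1,1-a),(1,-d)\}$ up to dihedral symmetry, with $a,d>1$. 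I would then rule out the two-line alternative: each $\bb_i'=\sum_{\bb\in Q_i}\bb$ is a sum of vectors of the $i$th closed quadrant, at least one of which lies in the open $i$th quadrant (since $G$ meets every open quadrant), so $\bb_i'$ lies in the open $i$th quadrant. If $G_Q$ lay on two lines, the positive-slope line would carry $\bb_1',\bb_3'$ and the negative-slope line $\bb_2',\bb_4'$; since $\sum_i\bb_i'=0$ this forces $\bb_1'=-\bb_3'$ and $\bb_2'=-\bb_4'$, so $I_Q$ is a complete intersection by \cref{lem:ci-imbalanced}, contradicting the syzygy quadrangle. Hence $G_Q$ has the stated special form and $\deg I_\cL=\deg I_Q=\abs{\det(\bb_4',\bb_1')}=a+d$.

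With the special form in hand, I would extract the geometry of $G$ itself. Since $\deg I_\cL=\deg I_Q$, \cref{lem:halfspaces} supplies nonzero $\bu_{13},\bu_{24}$ with $Q_1\cup Q_3\subseteq\{\bv:\bv\cdot\bu_{13}\ge 0\}$ and $Q_2\cup Q_4\subseteq\{\bv:\bv\cdot\bu_{24}\ge 0\}$. Substituting the explicit $\bb_1',\bb_3'$ (resp.\ $\bb_2',\bb_4'$) into these inequalities pins down the boundary lines: $\ell_{13}$ has positive slope and $\ell_{24}$ has negative slope, and every original Gale vector lies in the union of sectors cut out by $\ell_{13}$ and $\ell_{24}$ prescribed by these containments. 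Because $G$ is not contained in two lines (hypothesis (5)) and $n\ge 5$, some part $Q_i$ carries at least two vectors that are not all supported on $\ell_{13}\cup\ell_{24}$, which is the freedom I will exploit to regroup.

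Finally I would construct $R$. The key observation is that any non-Cohen--Macaulay codimension-$2$ lattice ideal in $\ZZ^4$ satisfies $\reg<\deg$ strictly: by \cref{thm:reduction} equality would force a two-line (hence, as in the second paragraph, complete-intersection) diagram, which is Cohen--Macaulay; and a Cohen--Macaulay non-complete-intersection ideal has $\reg\le\deg-1$ by \cref{prop:codim-2-CMnonCI-iff}. Thus the only way to reach $\reg I_R=\deg I_R$ is to make $I_R$ a degenerate complete intersection, for which $\reg I_R=\deg I_R$ holds by \cref{prop:general-ci-ideals}. I would therefore pass to a new Gale diagram for $\cL$ adapted to the lines $\ell_{13},\ell_{24}$ and choose a partition $R$ so that the aggregated diagram $G_R$ is imbalanced and, up to $\GL_2(\ZZ)$, equals $\{(1,0),(-1,0),(0,k),(0,-k)\}$; the half-plane containments of the previous step are exactly what allow the vectors of $Q_1\cup Q_3$ and of $Q_2\cup Q_4$ to be summed into two antipodal classes, one of which produces a degree-$1$ binomial. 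The hard part will be this last explicit regrouping, together with the degree bookkeeping needed to certify that the resulting complete intersection is genuinely degenerate (equivalently, that two aggregated Gale vectors become parallel with unit normal component); everything preceding it is a clean transfer of the hypotheses to $\cL_Q$ followed by the two-line elimination.
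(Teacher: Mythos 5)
Your reduction to $\cL_Q$ via \cref{cor:eqreg} and the appeal to \cref{lem:galediagrams} match the paper, but the argument breaks at two places, both stemming from the same false belief: that a Gale diagram consisting of two antipodal pairs lying on two lines is imbalanced and hence gives a complete intersection. That is not what \cref{lem:ci-imbalanced} says. The diagram $\{(1,1),(1,-1),(-1,-1),(-1,1)\}$ consists of antipodal pairs on two lines, yet it meets all four open quadrants, so by \cref{prop:non-cm-open-quadrants} the corresponding ideal is not even Cohen--Macaulay, let alone a complete intersection; and by \cref{lem:deg=a+b} (with $a=b=1$) it satisfies $\reg=\deg=2$. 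So your elimination of the two-line alternative for $G_Q$ is unjustified --- the correct route uses \cref{lem:halfspaces}: the closed half-planes containing $Q_1\cup Q_3$ and $Q_2\cup Q_4$ must have the two lines as their boundaries (since $\bb_1'=-\bb_3'$ both lie in the first half-plane), which forces each $Q_i$ onto those lines and contradicts hypothesis (5). More seriously, your ``key observation'' in the final paragraph --- that every non-Cohen--Macaulay codimension-$2$ lattice ideal in $\PP^3$ has $\reg<\deg$ strictly --- is flatly contradicted by \eqref{reg=degGales} and \cref{lem:deg=a+b}, which exhibit non-Cohen--Macaulay such ideals with $\reg=\deg$.

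This false observation dooms your construction of $R$. Because the unit square is a syzygy quadrangle of $I_\cL$, the diagram $G$ meets all four open quadrants, so for \emph{any} quadrant partition $R$ the aggregated diagram $G_R$ also meets all four open quadrants; hence $I_R$ is never Cohen--Macaulay and in particular never a (degenerate) complete intersection. The target you set yourself is unreachable. The mechanism that actually works is a degree drop: one exhibits a repartition $R$ with $\deg I_R<\deg I_\cL$, whereupon $\deg I_R\le\deg I_\cL-1=\reg I_\cL\le\reg I_R\le\deg I_R$ forces $\reg I_R=\deg I_R$ (and the resulting $I_R$ is typically of the non-Cohen--Macaulay form \eqref{reg=degGales}). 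Producing such an $R$ is exactly the bookkeeping you deferred: one first disposes of the case where $G$ meets both the positive and negative $y$-axis (split those vectors between $R_1$ and $R_3$ to violate the half-plane criterion of \cref{lem:halfspaces}), then shows that $Q_2$ contains no $y$-axis vectors, that necessarily $n=5$, and that $G=\{(1,a-1),(0,1),(-1,d-1),(-1,1-a),(1,-d)\}$; finally, moving $(0,1)$ from the first part to the second puts $G_R$ on two lines, and \cref{lem:halfspaces} together with hypothesis (5) yields $\deg I_R<\deg I_\cL$.
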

\begin{proof}
Recall that the unit square is a syzygy quadrangle of $I_Q$, with the same total degree as that of $I_\cL$. Since $\reg I_Q = \reg I_\cL$ by \cref{cor:eqreg}, it follows that the unit square still attains the regularity. We therefore know that $G_Q = \{\bb_1',\bb_2',\bb_3',\bb_4'\}$ is of one of the forms described in \cref{lem:galediagrams}. Suppose that $G_Q$ lies on two lines $\ell_1 \supseteq \{\bb_1',\bb_3'\}$ and $\ell_2\supseteq \{\bb_2',\bb_4'\}$ passing through the origin. Since $\reg I_Q=\reg I_\cL$, we see that $\deg I_\cL = \deg I_Q$, so \cref{lem:halfspaces} implies that there is a closed half-plane $\cH_{13}$ containing $Q_1\cup Q_3$. Since $\cH_{13}$ is closed under addition, this implies that $\bb_1',\bb_3'\in \cH_{13}$, so we conclude that $\partial \cH_{13}=\ell_1$. But then every vector in $Q_1$ must lie on $\ell_1$, otherwise $\bb_1'\in \cH_{13}\setminus \ell_1$. Likewise, $Q_3\subseteq \ell_1$. Analogously, $Q_2,Q_4\subseteq \ell_2$. This contradicts that $G$ is not contained in two lines. So henceforth suppose $G_Q$ is not contained in two lines.

Let $\ell^+$ and $\ell^-$ denote the positive and negative $y$-axis, respectively. If $G \cap \ell^+$ and $G\cap \ell^-$ are both nonempty, then \cref{lem:halfspaces} implies that there is a partition $R$ with $\deg I_R<\deg I_\cL$ (choose $R$ so that $R_1$ contains vectors from $G\cap \ell^+$ and $R_3$ contains vectors from $G\cap \ell^-$). This would imply that $\reg I_R=\deg I_R$. So henceforth, without loss of generality, assume that $G\cap \ell^-=\emptyset$.

\cref{lem:galediagrams} implies that up to reflections over the axes, we have $$G_Q = \{(1,a),(-1,d-1),(-1,1-a),(1,-d)\}$$ for some $a,d > 1$. Combining this with $G\cap \ell^-=\emptyset$, we have $$G = \{(1,a'),(-1,b'),(-1,1-a),(1,-d)\} \cup Q_1^{x=0}\cup Q_2^{x=0},$$ where $Q_i^{x=0}$ is the intersection of $Q_i$ with the $y$-axis, and $0 < a' \le a$ and $0 < b' \le d-1$. Suppose that $Q_2^{x=0}$ is nonempty, that is, there is some $(0,\eps)\in Q_2$. Since $\deg I_\cL = \deg I_Q$, we can choose a half-plane $\cH_{24} = \{\bv\in \RR^2 \mid \bv \cdot \bu \ge 0\}$ containing $Q_2\cup Q_4$. If $\bu = (u_1,u_2)$, then $$-u_1 + u_2b' \ge 0, \quad u_1-u_2d \ge 0, \quad u_2\ge 0,$$ which quickly implies $u_1 = u_2 = 0$, a contradiction. So $Q_2^{x=0}=\emptyset$ and $b'=d-1$.
If $a' < a-1$, then the same argument shows that $Q_1^{x=0}=\emptyset$. But this is impossible, since $n\ge 5$. So $a'\ge a-1$. Since $Q_1^{x=0}$ contains some nonzero vector, we also see that $a'\le a-1$. So $a' = a-1$ and $Q_1^{x=0} = \{(0,1)\}$ (and also, evidently, $n=5$). To finish the argument, consider the partition $R=(R_i)_{i=1}^4$ obtained by taking $Q$ and moving $(0,1)$ from $Q_1$ to $Q_2$. The resulting lattice $\cL_R\subseteq \ZZ^4$ has Gale diagram $$G_R = \{(1,a-1),(-1,d),(-1,1-a),(1,-d)\},$$ which lies on two lines. Now, as in the first paragraph, $\deg I_R \ne \deg I_{\cL}$.
\end{proof}

\subsection{When equality holds in \eqref{ineqB}}
\label{sec:reg=degn=4}

In this subsection, we establish which lattices $\cL_0$ with $n=4$ satisfy $\reg I_{\cL_0}=\deg I_{\cL_0}$. One direction is provided by \cite{PS}*{Remark 7.9}, which states that if $\reg I_{\cL_0}=\deg I_{\cL_0}$ and $G_0$ is a Gale diagram for $\cL_0$ chosen so that the unit square is a syzygy quadrangle attaining the regularity, then up to dihedral symmetries, $G_0$ equals either
\begin{equation}
\label{reg=degGales}
    \{(1,1),(a,-b),(-1,-1),(-a,b)\}, \quad \text{or}\quad\{(1,a),(1,-b),(-1,-a),(-1,b)\},
\end{equation} for $a,b\ge 1$. In particular, if $\reg I_{\cL_0}=\deg I_{\cL_0}$, then there exists some Gale diagram of one of those two forms. We prove the converse to this statement (also implicit in \cite{PS}), as stated in \cref{lem:deg=a+b}.

\begin{lemma}\label{lem:deg=a+b}
Let $\cL_0\subseteq\ZZ^4$ be a lattice such that there exists a $G_0$ as in \eqref{reg=degGales}. Then $\deg I_{\cL_0} = a+b$, $\reg I_{\cL_0} = \deg I_{\cL_0}$, and the unit square is a syzygy quadrangle attaining the regularity.
\end{lemma}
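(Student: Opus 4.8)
The plan is to prove the three assertions in sequence: first that the unit square is a syzygy quadrangle, which forces $\reg I_{\cL_0}\ge a+b$; next that $\deg I_{\cL_0}=a+b$; and finally to combine these with the upper bound of \cref{thm:reduction} to extract equality and the statement about attaining the regularity. To begin, I would observe that in both forms of \eqref{reg=degGales} the four Gale vectors occupy the four open quadrants (in the first form $(1,1),(-a,b),(-1,-1),(a,-b)$ lie in quadrants I, II, III, IV; in the second, $(1,a),(-1,b),(-1,-a),(1,-b)$ do). By \cref{prop:non-cm-open-quadrants} this shows $I_{\cL_0}$ is not Cohen--Macaulay, and by \cref{lem:ci-imbalanced} it is not a complete intersection, so \cref{prop:PS-syz-quads} applies. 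To see that the unit square $[(1,0),(0,1)]$ is a syzygy quadrangle it then suffices to check that each vertex is supported by some Gale vector; but the vertex in the closure of the $i$th quadrant is the strict unique maximizer of $\bb\cdot(-)$ for the Gale vector $\bb$ lying in the open $i$th quadrant (strictness holds because $\bb$ is parallel to neither axis), so this is immediate. A syzygy quadrangle records a minimal third syzygy of $S/I_{\cL_0}$ in a degree $C$ with $P_C$ the unit square, and by \cref{cor:syzygy-degrees} its total degree is $\deg C=\sum_j\max\{\bb_j\cdot\bu\mid\bu\in[(1,0),(0,1)]\}$; evaluating at the four vertices gives $\deg C=a+b+2$ in both forms. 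Since a third syzygy contributes $\deg C-3$ to $\reg(S/I_{\cL_0})$ and $\reg I_{\cL_0}=\reg(S/I_{\cL_0})+1$, this yields $\reg I_{\cL_0}\ge\deg C-2=a+b$.

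The key remaining step, and the main obstacle, is to compute $\deg I_{\cL_0}=a+b$ exactly. I would do this by comparing $\cL_0$ with its saturation. In each form one can exhibit an explicit surjection $A\colon\ZZ^4\to\ZZ^2$ with $\ker A=\cL_0^{\mathrm{sat}}$; the choice $A=\left[\begin{smallmatrix}1&1&1&1\\1&0&1&0\end{smallmatrix}\right]$ works in both cases, as its two rows are orthogonal to the two columns of the defining matrix $B$ of $\cL_0$ (whose rows are the Gale vectors). Writing $\cL_0$ in a basis of $\ker A$ one finds the index $[\cL_0^{\mathrm{sat}}:\cL_0]=a+b$, while the saturated toric ideal $I_{\cL_0^{\mathrm{sat}}}$ cuts out a line, since the columns of $A$ span a primitive segment, whence $\deg I_{\cL_0^{\mathrm{sat}}}=1$. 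The multiplicity of a lattice ideal is the product of the order of the torsion subgroup of $\ZZ^4/\cL_0$ (which is $\cL_0^{\mathrm{sat}}/\cL_0$) with the degree of its toric part, so $\deg I_{\cL_0}=(a+b)\cdot 1=a+b$. Note in passing that $[\cL_0^{\mathrm{sat}}:\cL_0]=a+b\ge 2$, so $\cL_0$ is never saturated and $I_{\cL_0}$ is never toric; this is precisely what leaves room for equality in \cref{thm:reduction}, which is strict in the toric case.

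Finally I would assemble the pieces. As $I_{\cL_0}$ is a non-Cohen--Macaulay lattice ideal of codimension $2$, \cref{thm:reduction} gives $\reg I_{\cL_0}\le\deg I_{\cL_0}=a+b$, and combined with the lower bound $\reg I_{\cL_0}\ge a+b$ from the syzygy quadrangle this forces $\reg I_{\cL_0}=a+b=\deg I_{\cL_0}$. Moreover the third syzygy supported at the unit square has degree $C$ with $\deg C-2=a+b=\reg I_{\cL_0}$, so the unit square is a syzygy quadrangle attaining the regularity, establishing all three claims. The delicate point is the degree computation of the preceding paragraph, where one must invoke the multiplicativity of the multiplicity over the saturation (order of the torsion of $\ZZ^4/\cL_0$ times the degree of the toric part); the syzygy-quadrangle identification and the regularity bookkeeping are otherwise direct consequences of \cref{prop:PS-syz-quads}, \cref{cor:syzygy-degrees}, and \cref{thm:reduction}.
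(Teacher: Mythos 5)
Your proof is correct, and its overall skeleton coincides with the paper's: show $\deg I_{\cL_0}=a+b$, use \cref{prop:PS-syz-quads} and \cref{cor:syzygy-degrees} to exhibit the unit square as a syzygy quadrangle of total degree $a+b+2$ (hence $\reg I_{\cL_0}\ge a+b$), and close the gap with the upper bound $\reg I_{\cL_0}\le\deg I_{\cL_0}$ from \cref{thm:reduction}. Where you genuinely diverge is the degree computation. The paper disposes of it in one line by citing the formula of \cite{OPVV}*{Theorem 4.6}, whereas you compute it by hand: you identify $\cL_0^{\mathrm{sat}}=\ker\left[\begin{smallmatrix}1&1&1&1\\1&0&1&0\end{smallmatrix}\right]$ (the orthogonality checks go through in both cases of \eqref{reg=degGales}), compute $[\cL_0^{\mathrm{sat}}:\cL_0]=a+b$ (also correct: in the basis $(1,0,-1,0),(0,1,0,-1)$ of the kernel, the two generators of $\cL_0$ have determinant $\pm(a+b)$), observe $\deg I_{\cL_0^{\mathrm{sat}}}=1$ since that ideal is $\langle x_1-x_3,\,x_2-x_4\rangle$, and invoke multiplicativity of degree over the torsion part. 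Your route is more self-contained and more informative---in particular it shows $\cL_0$ is never saturated, reconciling the equality $\reg I_{\cL_0}=\deg I_{\cL_0}$ with the strictness of \cref{thm:reduction} for toric ideals---but it leans on the unproved identity $\deg I_{\cL}=\lvert T(\ZZ^n/\cL)\rvert\cdot\deg I_{\cL^{\mathrm{sat}}}$. That identity is true in every characteristic (in characteristic $0$ it follows from the primary decomposition of lattice ideals, and the general case follows since the Hilbert function of a lattice ideal is characteristic-independent), but it needs a citation just as the paper's formula does, so the two routes rest on comparable external inputs. One small slip: your appeal to \cref{lem:ci-imbalanced} to rule out complete intersections is not quite right as stated, since that lemma concerns the existence of \emph{some} imbalanced Gale diagram, not the shape of the particular one at hand; this is harmless, because non-Cohen--Macaulayness (which you correctly get from \cref{prop:non-cm-open-quadrants}) already implies $I_{\cL_0}$ is not a complete intersection, which is all that \cref{prop:PS-syz-quads} requires.
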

\begin{proof}
Let $\cL_0$ be one of the listed lattices. The fact that $\deg I_{\cL_0} = a+b$ follows immediately from the formula in \cite{OPVV}*{Theorem 4.6}. By \cref{prop:PS-syz-quads}, the unit square is a syzygy quadrangle of $I_{\cL_0}$. Let $C$ be the multidegree for which the unit square is $P_C$. Then $\deg C = a+b+2$ in both cases of \eqref{reg=degGales}, so $\reg I_{\cL_0} \ge a+b$. On the other hand, $\deg I_{\cL_0} = a+b$, so $\reg I_{\cL_0} \ge \deg I_{\cL_0}$. By \cref{thm:reduction}, we are done.
\end{proof}

This result also has implications for lattice ideals of maximal regularity in arbitrary dimension. 

\begin{definition}
A reduction datum $(\cL,G,Q)$ is called \textit{perfectly balanced} if $$\sum_{\bb \in Q_1\cup Q_3} \bb= \sum_{\bb\in Q_2 \cup Q_4} \bb= 0.$$
\end{definition}

Then \cref{lem:deg=a+b} has the following corollary.

\begin{corollary}
Suppose $I_\cL$ is not Cohen--Macaulay and $\reg I_\cL = \deg I_\cL - 1$. Then there exists a perfectly balanced reduction datum $(\cL, G, Q)$ for which the unit square attains the regularity.
\end{corollary}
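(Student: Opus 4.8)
The plan is to read the inequality chain \eqref{reductionchain} backwards and engineer a reduction datum for which the middle inequality \eqref{ineqB} is an equality, since \cref{lem:deg=a+b} together with \eqref{reg=degGales} shows that $\reg I_Q = \deg I_Q$ (with the unit square attaining the regularity) forces $G_Q$ into one of two explicit shapes, both of which are perfectly balanced. Before starting I would clear away zero Gale vectors: a zero vector contributes nothing to any sum $\sum_{\bb\in Q_i}\bb$ and supports no vertex, so it affects neither the perfect-balance condition nor the set of syzygy quadrangles; thus I may assign all zero vectors to $Q_1$ at the end and assume every Gale vector is nonzero. Since $I_\cL$ is not Cohen--Macaulay it has a syzygy quadrangle by \cref{prop:non-cm-open-quadrants}, and applying a $\GL_2(\ZZ)$ change of coordinates that carries a regularity-attaining syzygy quadrangle to the unit square, I fix a reduction datum $(\cL,G,Q)$ in which the unit square attains the regularity.

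The central bookkeeping observation is that perfect balance of the datum is literally perfect balance of the four reduced vectors: writing $\bb_i' = \sum_{\bb\in Q_i}\bb$, one has $\sum_{\bb\in Q_1\cup Q_3}\bb = \bb_1'+\bb_3'$ and $\sum_{\bb\in Q_2\cup Q_4}\bb = \bb_2'+\bb_4'$, so $(\cL,G,Q)$ is perfectly balanced if and only if $G_Q$ satisfies $\bb_1'=-\bb_3'$ and $\bb_2'=-\bb_4'$. Both diagrams in \eqref{reg=degGales} have exactly this property (opposite quadrants carry opposite vectors), and it is invariant under the dihedral symmetries. Hence, by \cref{lem:deg=a+b}, it suffices to exhibit a reduction datum, still with the unit square attaining the regularity, for which \eqref{ineqB} is an equality.

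To produce such a datum I would run the trichotomy forced by \eqref{reductionchain}. Because $\reg I_\cL = \deg I_\cL - 1$, exactly one of the three inequalities is strict, and \cref{cor:eqreg} makes \eqref{ineqA} an equality; so either \eqref{ineqB} is already an equality, in which case $Q$ itself works by the previous paragraph, or \eqref{ineqB} is strict, whence \eqref{ineqC} is an equality and $\reg I_Q = \deg I_Q - 1$, $\deg I_Q = \deg I_\cL$. In the latter case I would repartition $G$. If $G$ lies on two lines through the origin, then since the sum of all Gale vectors is $0$ and two distinct lines meet only at the origin, the vectors on each line already sum to $0$; placing the two lines into the opposite pairs $(Q_1,Q_3)$ and $(Q_2,Q_4)$ gives a perfectly balanced datum outright. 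If $G$ is not contained in two lines and $n\ge 5$, then the hypotheses of \cref{prop:regdeg} hold -- conditions (1)--(3) by construction, (4) since the Gale vectors are nonzero, and (5) by assumption -- producing a new partition $R$ with $\reg I_R = \deg I_R$. Since whether the unit square is a regularity-attaining syzygy quadrangle depends only on $G$ and $\cL$ and not on the partition, the datum $(\cL,G,R)$ still attains the regularity, and it is perfectly balanced by the second paragraph.

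The step I expect to be most delicate is the remaining configuration where $G$ is neither contained in two lines nor has $n\ge 5$, i.e.\ the genuinely four-variable non-complete-intersection case. Here \cref{prop:regdeg} is unavailable, and I would instead invoke the explicit normal form of \cref{lem:galediagrams}, namely $\{(1,a),(-1,d-1),(-1,1-a),(1,-d)\}$ with $a,d>1$, and check carefully whether and how it can be rebalanced; this is exactly the point where the ``reduce to a curve in $\PP^3$'' machinery gives out and one must argue by hand, so I would expect to lean on the separate treatment of the $n'=4$ case (via \cref{cor:toric-n=4-iff}) to close the corollary in the regime where the reduction genuinely applies. A secondary bookkeeping hazard, already visible above, is ensuring that the regularity-attaining syzygy quadrangle can be normalized to the unit square while simultaneously arranging the half-plane containments of \cref{lem:halfspaces}, so that the equality $\deg I_Q = \deg I_\cL$ can be read off exactly when it is needed.
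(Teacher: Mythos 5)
Your strategy for the regime of at least five nonzero Gale vectors is sound and is essentially what the paper itself does later, in the proof of \cref{prop:preliminary-non-cm-main-result}: force equality in \eqref{ineqA} via \cref{cor:eqreg}, then either \eqref{ineqB} is already an equality or \cref{prop:regdeg} (resp.\ the two-lines observation) produces a repartition achieving it, after which \eqref{reg=degGales} gives perfect balance. Two small points of bookkeeping: the implication ``$\reg I_Q=\deg I_Q$ with the unit square attaining the regularity forces $G_Q$ into \eqref{reg=degGales}'' is \cite{PS}*{Remark 7.9} (quoted at the start of \cref{sec:reg=degn=4}), not \cref{lem:deg=a+b}, which is the converse; and when you pass to the new partition $R$ you must check that the unit square attains the regularity of $I_R$, not merely of $I_\cL$, before invoking that remark --- this does hold, but only because \cref{cor:eqreg} gives $\reg I_R=\reg I_\cL$ while the unit square keeps the same total degree.

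The case you flagged as delicate --- four nonzero Gale vectors with $G$ not on two lines --- is a genuine gap, and it cannot be closed, because the statement is false there. Take the monomial curve with exponents $(0,1,d-1,d)$ for $d\ge 4$: by \cref{cor:toric-n=4-iff} it is not Cohen--Macaulay and satisfies $\reg I_\cL=\deg I_\cL-1$, with Gale diagram $\{(1,0),(-1,1),(-1,-d+1),(1,d-2)\}$. Any reduction datum requires a $\GL_2(\ZZ)$-representative of $G$ with one vector in each open quadrant (else the unit square is not a syzygy quadrangle, by \cref{prop:PS-syz-quads}), which forces each $Q_i$ to be a singleton; perfect balance would then require two pairs of mutually opposite Gale vectors, and no two vectors of this diagram are negatives of one another --- a $\GL_2(\ZZ)$-invariant condition. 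So leaning on \cref{cor:toric-n=4-iff} cannot rescue this case; the corollary needs an additional hypothesis such as ``at least five nonzero Gale vectors,'' exactly the hypothesis under which the paper actually deploys this kind of statement in \cref{prop:preliminary-non-cm-main-result}. (The paper gives no proof of the corollary and never cites it, so nothing downstream is affected, but as written your argument cannot be completed because the claim itself fails for these lattices.)
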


\subsection{Analyzing inequality \eqref{ineqC}}
\label{sec:deg=deg}

In this subsection we prove \cref{prop:deg-1-iff}, which characterizes when $\deg I_Q = \deg I_\cL - 1$ and will help prove \cref{thm:main}.

Fix a reduction datum $(\cL, G, Q)$. Throughout this section, $i$ and $j$ are indices such that $\{i,j\}$ is $\{1,3\}$ or $\{2,4\}$.

\begin{definition}
Define $A_i$ (resp. $B_i$) to be the set of $\bu \in \mathbb Z^2$ for which there exist $\bv_1,\bv_{-1}\in Q_i$ (resp. $\bv_1\in Q_i, \bv_{-1}\in Q_j$) such that $\bv_1\cdot\bu=-\bv_{-1}\cdot\bu=1$, and $\bb\cdot\bu=0$ for all $\bb\in (Q_i\cup Q_j)\setminus\{\bv_1,\bv_{-1}\}$. Also define $C_i \coloneqq A_i\cup B_i$. 
\end{definition}

\begin{definition}\label{def:simple}
Suppose that $\sum_{\bb\in Q_i\cup Q_j}\bb=0$. We say the reduction datum $(\cL, G, Q)$ is \textit{$i$-simple} if there are vectors $\bv,\bw\in \ZZ^2$ such that $\det(\bv,\bw) = 1$, the set of nonzero vectors in $Q_i$ is $\{\bv,\bw\}$, and the set of nonzero vectors in $Q_j$ is either $\{-\bv,-\bw\}$ or $\{-\bv-\bw\}$. We say $(\cL, G, Q)$ is \textit{$\{i,j\}$-simple} if it is $i$-simple or $j$-simple. If the reduction datum is understood, we also refer to $G$ as being \textit{$i$-simple} or \textit{$\{i,j\}$-simple}.
\end{definition}

\begin{proposition}
\label{bigdegiff}
Suppose $\sum_{\bb \in Q_i\cup Q_j}\bb = 0$ and $\frp = \langle y_i,y_j\rangle$ is an associated prime of $J_Q$ (see \cref{reductionprocess}). Then the following are equivalent:
\begin{enumerate}
    \item $\frp$ is the $\frp$-primary part of $J_Q$,
    \item there are distinct $f,g\in \Phi$ and distinct $f',g'\in \Phi$ for which $y_if-y_ig, y_jf'-y_jg'\in J_Q$,
    \item there are $\bu\in C_i$ and $\bu'\in C_j$ with $\bu\ne \pm \bu'$, 
    \item $B_i$ contains at least two distinct vectors,
    \item $(\cL, G, Q)$ is $\{i,j\}$-simple. 
\end{enumerate}
\end{proposition}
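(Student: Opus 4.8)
The plan is to prove the five conditions equivalent by establishing a cycle of implications, leaning on the combinatorial reformulations provided by the sets $A_i$, $B_i$, $C_i$ and on the structure theory from \cite{PS} recalled above. The most natural route is to first clarify the dictionary between algebraic statements about $J_Q$ (conditions (1), (2)) and the purely combinatorial statements (conditions (3), (4), (5)) about the Gale vectors in $Q_i\cup Q_j$, and then to close the loop entirely within the combinatorics. I would prove the chain $(5)\Rightarrow(4)\Rightarrow(3)\Rightarrow(2)\Rightarrow(1)\Rightarrow(5)$, or a convenient variant thereof, so that each individual step is as short as possible.

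\textbf{The combinatorial core.} First I would unwind the definitions of $A_i$ and $B_i$. A vector $\bu\in B_i$ records a way of pairing one vector $\bv_1\in Q_i$ against one vector $\bv_{-1}\in Q_j$ so that $\bu$ separates exactly this pair (giving values $+1$ and $-1$) while annihilating every other vector in $Q_i\cup Q_j$. Since we are assuming $\sum_{\bb\in Q_i\cup Q_j}\bb=0$, such a $\bu$ encodes a binomial relation $y_if-y_ig$ (or the $y_j$ analogue) lying in $J_Q$; this is precisely the bridge to condition (2). The equivalence $(4)\Leftrightarrow(5)$ and $(3)\Leftrightarrow(5)$ should reduce to a linear-algebra analysis: if $(\cL,G,Q)$ is $\{i,j\}$-simple, so that the nonzero vectors of $Q_i$ are $\{\bv,\bw\}$ with $\det(\bv,\bw)=1$, then the dual basis furnishes two explicit functionals $\bu,\bu'$ witnessing membership in $C_i$ and $C_j$ (hence giving (3)) and two distinct elements of $B_i$ (hence (4)). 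Conversely, having two distinct separating functionals forces $Q_i\cup Q_j$ to consist of essentially two opposite pairs spanning a unit-determinant configuration, which is the definition of simplicity; the condition $\abs{\det(\bv,\bw)}=1$ will come out of the requirement that the functionals take the exact values $\pm 1$ rather than larger integers.

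\textbf{The algebraic endpoints.} For $(2)\Rightarrow(1)$ I would argue that the existence of the two binomials $y_if-y_ig$ and $y_jf'-y_jg'$ in $J_Q$ means $J_Q$ already contains, modulo $\frp=\langle y_i,y_j\rangle$-primary information, enough to pin down the $\frp$-primary component to be $\frp$ itself rather than a higher power, using that $\codim J_Q=2$ and that $\frp$ is an associated prime by hypothesis. For $(1)\Rightarrow(5)$ (or whichever implication returns to simplicity), I would use \cref{lem:J-contains-line} to control which $\langle y_i,y_j\rangle$ can occur and then translate the primality/length-one condition back into the statement that the vectors of $Q_i$ and $Q_j$ sit in the rigid configuration of \cref{def:simple}.

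\textbf{Anticipated main obstacle.} I expect the hardest step to be $(1)\Rightarrow$ (the combinatorial conditions): extracting the geometric rigidity (the unit determinant and the opposite-pair structure) from the algebraic statement that $\frp$ equals its own primary part. This requires a careful analysis of the primary decomposition of $J_Q$ along $\frp$, keeping track of the saturation $I_Q=(J_Q:(y_1y_2y_3y_4)^\infty)$, and showing that any deviation from simplicity—either a non-unit determinant or an extra vector off the two spanning directions—forces the $\frp$-primary component to have length greater than one, contradicting (1). The bookkeeping of how lattice points in the relevant fibers correspond to monomials divisible by $y_i$ or $y_j$, as governed by \cref{cor:syzygy-degrees} and the fiber/polygon formalism, is where the real work lies; the remaining implications should be comparatively mechanical once the dictionary between separating functionals and binomials in $J_Q$ is set up cleanly.
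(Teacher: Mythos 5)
Your plan has the right overall architecture --- a cycle of implications, a dictionary between separating functionals on $Q_i\cup Q_j$ and binomials $y_if-y_ig\in J_Q$, linear algebra for the purely combinatorial equivalences, and primary decomposition for the algebraic endpoints --- and this matches the paper's proof in broad outline (the paper proves $(1)\Leftrightarrow(2)$, $(2)\Rightarrow(3)\Rightarrow(4)\Rightarrow(2)$, and $(4)\Leftrightarrow(5)$). However, there are two genuine gaps. First, the step you correctly flag as the main obstacle --- getting from the algebraic condition (1) back to anything combinatorial --- is resolved in the paper not by analyzing fibers and polygons, but by invoking the structure theory of primary components of binomial ideals from Dickenstein--Matusevich--Miller: condition (1) is equivalent to the existence of witnesses $y_i-\by^{\bw}\in (J_Q)_\Phi$, and the balancing hypothesis $\sum_{\bb\in Q_i\cup Q_j}\bb=0$ forces $w_i=1$, $w_j=0$, yielding exactly the binomials of condition (2). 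Your proposal names no mechanism that would accomplish this, so the implication $(1)\Rightarrow(2)$ (equivalently your $(1)\Rightarrow(5)$) is not actually established by what you describe.

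Second, you treat the combinatorial equivalences as routine dual-basis manipulations, but condition (3) is stated in terms of $C_i=A_i\cup B_i$, and the real content of $(3)\Rightarrow(4)$ is disposing of the cases where the witnessing functionals lie in $A_i$ rather than $B_i$ (e.g.\ showing $\bu\in A_i$, $\bu'\in A_j$ is impossible because it would force two vectors of $Q_i$ on opposite sides of $\bu$ to be collinear, and converting the mixed case $\bu\in A_i$, $\bu'\in B_j$ into two genuine elements of $B_i$). Likewise $(2)\Rightarrow(3)$ requires a minimality argument on an expression of $y_if-y_ig$ in terms of the lattice generators to rule out $\lvert C_i\cup(-C_j)\rvert=1$. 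Neither of these is mechanical, and your sketch does not engage with the $A_i$ versus $B_i$ distinction at all. The direction $(5)\Rightarrow(4)$ via the dual basis of $\{\bv,\bw\}$ is as easy as you say; it is the converses that carry the weight.
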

\begin{proof} Without loss of generality, suppose $i=2, j=4$. Let $\Phi = \{y_1^ay_3^b \mid a,b\ge 0\}$ be a multiplicatively closed subset of $\Bbbk[y_1,\dots,y_4]$, and let $\mathscr M$ denote the full set of monomials in $\Bbbk[y_1,\dots,y_4]$. \\

    \noindent (1)$\iff$(2): It is clear by \cite{DMM}*{Lemma 2.9, Theorem 2.14} that (2)$\implies$(1), since in this case, $y_2g/f\sim_{(J_Q)_\Phi} y_2$ and similarly for $y_4$, so $y_2$ and $y_4$ both appear in the $\frp$-primary part of $J_Q$. Also by \cite{DMM}*{Lemma 2.9, Theorem 2.14}, (1) implies that there is some $\bw = (w_1,w_2,w_3,w_4) \ne (0,1,0,0)$ such that $y_2 - \by^{\bw}\in (J_Q)_\Phi$, $w_2\ge 1$, $w_4\ge 0$. The condition that $\sum_{\bb \in Q_2\cup Q_4}\bb = 0$ implies that $(w_2-1)+w_4 = 0$, so $w_2=1$ and $w_4=0$; in particular $\by^\bw = y_2g/f$ for some $g,f\in \Phi$. For some $p\in \Phi$, then, we have $y_2pf - y_2pg \in J_Q$. The same argument applies to $y_4$. \\
    
    \noindent (2)$\implies$(3): Since $y_2f-y_2g \in J_Q$, it follows that $C_2$ is nonempty. Similarly $C_4$ is nonempty. There is some minimal $m > 0$, monomials $\beta_1,\cdots,\beta_m\in \mathscr M$, and vectors $\bu_1,\cdots,\bu_m\in \ZZ^2$ for which $$y_2f-y_2g = \sum_{\ell=1}^m \beta_\ell\cdot \phi_Q(\bx^{(B\bu_\ell)^+}-\bx^{(B\bu_\ell)^-}).$$ Suppose towards a contradiction that $|C_2\cup (-C_4)| = 1$, so that there is some vector $\bu \in \ZZ^2$ for which $C_2=\{\bu\}$, $C_4 = \{-\bu\}$. By the minimality of $m$, every $\bu_\ell$ must be in $C_2$ or $C_4$, so $\bu_\ell = \pm \bu$ for each $\ell$. Then $y_2f-y_2g = P\cdot \phi_Q(\bx^{(B\bu)^+}-\bx^{(B\bu)^-})$ for some polynomial $P$ (namely $\sum \beta_\ell$), which is absurd since $-\bu \in C_4$ but $y_2f-y_2g$ has no monomial terms divisible by $y_4$. 
    
    It follows that $|C_2\cup (-C_4)|\ge 2$, so there exist $\bu \in C_2$ and $\bu'\in C_4$ with $\bu \ne \pm \bu'$.\\
    
    \noindent (3)$\implies$(4): If $\bu\in B_2$ and $\bu'\in B_4$, we are done. Up to symmetries, there are two remaining cases:
\begin{itemize}
    \item $\bu \in A_2, \bu'\in A_4$: Let $\bv_1,\bv_{-1}\in Q_2$ with $\bv_1\cdot \bu = -\bv_{-1}\cdot \bu = 1$. The condition $\bu'\in A_4$ implies that $\bv_1,\bv_{-1}$ are both orthogonal to $\bu'$, and so $\bv_1,\bv_{-1}$ are collinear, but this is impossible since both are in $Q_2$ and lie on opposite sides of $\bu$. So this case does not occur. 
    \item $\bu \in A_2, \bu'\in B_4$: Then there are vectors $\bv_1,\bv_{-1}\in Q_2$ with $\bv_1\cdot \bu =-\bv_{-1}\cdot \bu = 1$ with $\bu$ orthogonal to all other vectors of $Q_2\cup Q_4$. Similarly there are vectors $\bw_{-1}\in Q_2$, $\bw_1\in Q_4$ for which $\bw_1\cdot \bu'=-\bw_{-1}\cdot \bu'=1$ and $\bu'$ is orthogonal to all other vectors of $Q_2\cup Q_4$. Thus $\{\bv_{\pm 1},\bw_{\pm 1}\}$ are the only nonzero vectors in $Q_2\cup Q_4$, since all other vectors are orthogonal to both $\bu$ and $\bu'$.
    
    If $\bw_{-1}$ is distinct from both $\bv_1$ and $\bv_{-1}$, then $\bv_1$ and $\bv_{-1}$ are both orthogonal to $\bu'$ and so are collinear, but this is impossible since $\bv_1$ and $\bv_{-1}$ lie on opposite sides of $\bu$. So $\bw_{-1} = \bv_1$ or $\bw_{-1}=\bv_1$. If $\bw_{-1} = \bv_1$, it is straightforward to verify that $-\bu-\bu', -\bu' \in B_2$. If $\bw_{-1}=\bv_{-1}$, one similarly verifies that $\bu-\bu', -\bu'\in B_2$.
    \end{itemize}
    So in all cases, there are two distinct vectors in $B_2$.\\
    
    \noindent (4)$\implies$(2): Since there are at least two distinct vectors in $B_2$, there are monomials $p,q,r,s\in \Phi$ for which $y_2p-y_4q, y_2r-y_4s \in J_Q$ and $p/r \ne q/s$. Writing 
    \begin{align*}
        y_2ps-y_2qr &= s(y_2p-y_4q)-q(y_2r-y_4s),\\
        y_4ps-y_4qr &= r(y_2p-y_4q)-p(y_2r-y_4s)
    \end{align*}gives the desired elements of $J_Q$.\\
    
    \noindent (4)$\implies$(5): Let $\bu,\bu'\in B_2$ be distinct. By definition of $B_2$, there exist $\bv_1,\bw_1\in Q_2$ and $\bv_{-1},\bw_{-1}\in Q_4$ such that $\bv_\eps \cdot \bu = \bw_\eps \cdot \bu'=\eps$ for $\eps = \pm 1$, and $\bb\cdot\bu=0$ for all $\bb\in (Q_2\cup Q_4)\setminus\{\bv_{\pm 1}\}$, and $\bb\cdot\bu'=0$ for all $\bb\in (Q_2\cup Q_4)\setminus\{\bw_{\pm 1}\}$. Note that $\bv_{\pm 1},\bw_{\pm 1}$ are the only nonzero vectors in $Q_2\cup Q_4$, since all other vectors equal $\llb 0,0\rrb=0$.
    
    Note that $\bu,\bu'$ are $\QQ$-linearly independent, so we can uniquely represent vectors $\bv\in\ZZ^2$ as ordered pairs $\llb\bv\cdot\bu,\bv\cdot\bu'\rrb.$ If $\bv_1\neq\bw_1$, then $\bv_1=\llb 1,0\rrb$ and $\bw_1=\llb 0,1\rrb$, and if $\bv_1=\bw_1$, then $\bv_1=\bw_1=\llb 1,1\rrb$. Similarly, if $\bv_{-1}\neq\bw_{-1}$, then $\bv_{-1}=\llb -1,0\rrb$ and $\bw_{-1}=\llb 0,-1\rrb$, and if $\bv_{-1}=\bw_{-1}$, then $\bv_{-1}=\bw_{-1}=\llb -1,-1\rrb$. It is straightforward to check that in three of these cases, the Gale diagram $G$ is $\{2,4\}$-simple. The remaining case, where $\bv_1=\bw_1$ and $\bv_{-1}=\bw_{-1}$, cannot occur if $\frp$ is an associated prime of $J_Q$, by \cref{lem:J-contains-line}.\\
    
    \noindent (5)$\implies$(4): Suppose without loss of generality that $Q_2$ contains exactly two nonzero vectors $\bv=(v_1,v_2)$ and $\bw=(w_1,w_2)$, and suppose without loss of generality that $\det(\bv,\bw)=1$, so $v_1w_2-v_2w_1=1$. Let $\bu_1=(w_2,-w_1)$ and $\bu_2=(-v_2,v_1)$ be nonzero vectors. Note that $\bv\cdot\bu_1=1$, $\bw\cdot\bu_1=0$, $\bv\cdot\bu_2=0$, and $\bw\cdot\bu_2=1$. Furthermore, either $-\bv-\bw$ is the only nonzero vector in $Q_4$, or $-\bv,-\bw$ are the only nonzero vectors in $Q_4$. In either case, we see that $\bu_1,\bu_2\in B_2$.
\end{proof}

We can now prove our main result comparing $\deg I_Q$ to $\deg I_\cL$. 
\begin{proposition}
\label{prop:deg-1-iff}
Suppose that $(\cL,G, Q)$ is perfectly balanced. Then $\deg I_Q=\deg I_\cL-1$ if and only if for either $\{i,j\}=\{1,3\}$ or $\{i,j\}=\{2,4\}$, the vectors in $G\setminus(Q_i\cup Q_j)$ all lie on a single line passing through the origin, and $(\cL, G,Q)$ is $\{i,j\}$-simple.
\end{proposition}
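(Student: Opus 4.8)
The plan is to convert the degree gap $\deg I_\cL - \deg I_Q$ into a sum of multiplicities of the finitely many associated primes of $J_Q$ that are destroyed upon saturation, and then to read off when this gap equals $1$ using the combinatorial dictionary already supplied by \cref{lem:J-contains-line} and \cref{bigdegiff}. First I would set up the degree bookkeeping. Since $I_Q = (J_Q:(y_1y_2y_3y_4)^\infty)$ and $\deg J_Q = \deg I_\cL$, the gap measures exactly the codimension-$2$ primary components of $J_Q$ whose radical contains $y_1y_2y_3y_4$: at any codimension-$2$ prime $\frp\not\ni y_1y_2y_3y_4$ the element $y_1y_2y_3y_4$ is a unit in the localization, so saturation preserves that component and its multiplicity, whereas the components supported on primes containing $y_1y_2y_3y_4$ are removed. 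By \cref{lem:J-contains-line}(1) the latter primes are precisely those of the form $\langle y_i,y_j\rangle$, and $\deg\langle y_i,y_j\rangle = 1$. Hence
\[
\deg I_\cL - \deg I_Q \;=\; \sum_{\{i,j\}} \operatorname{mult}_{\langle y_i,y_j\rangle}(J_Q),
\]
where the sum ranges over those $\{i,j\}\in\{\{1,3\},\{2,4\}\}$ for which $\langle y_i,y_j\rangle$ is an associated prime of $J_Q$, and each term is a positive integer.

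Next I would use perfect balance to decide which primes occur and with what multiplicity. By \cref{lem:J-contains-line}(2), $\langle y_i,y_j\rangle$ is associated if and only if $Q_i\cup Q_j$ is contained in no closed half-plane bounded by a line through the origin; since $\sum_{\bb\in Q_i\cup Q_j}\bb = 0$, containment in such a half-plane $\{\bv\cdot\bu\ge 0\}$ would force $\bb\cdot\bu=0$ for every $\bb$, i.e. $Q_i\cup Q_j$ would lie on a single line through the origin. Thus $\langle y_i,y_j\rangle$ is associated precisely when the nonzero vectors of $Q_i\cup Q_j$ do \emph{not} all lie on one line through the origin. When it is associated, its $\frp$-primary part has length $\ge 1$, and this length (the multiplicity) is $1$ exactly when the primary part equals $\frp$ itself, which is condition (1) of \cref{bigdegiff}; by the equivalence (1)$\iff$(5) this happens if and only if $(\cL,G,Q)$ is $\{i,j\}$-simple, and otherwise the multiplicity is at least $2$.

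Finally I would assemble these facts. The gap equals $1$ if and only if exactly one of the two primes $\langle y_1,y_3\rangle,\langle y_2,y_4\rangle$ is associated and has multiplicity $1$, since two associated primes would already force a gap of at least $2$. For the associated pair $\{i,j\}$ the multiplicity-one condition is $\{i,j\}$-simplicity, while non-association of the complementary pair $\{i',j'\}$ says, by the previous paragraph, that $G\setminus(Q_i\cup Q_j)=Q_{i'}\cup Q_{j'}$ lies on a single line through the origin. These two clauses are consistent: $\{i,j\}$-simplicity places two linearly independent vectors in $Q_i$, so $Q_i\cup Q_j$ is off any line and $\langle y_i,y_j\rangle$ is automatically associated. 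This is exactly the asserted characterization, and running the implications in reverse yields the converse.

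I expect the main obstacle to be the degree accounting of the first step: one must argue cleanly that saturation removes exactly the components supported on the primes $\langle y_i,y_j\rangle$ and leaves every other codimension-$2$ multiplicity untouched, so that the degree drop is literally the sum of the multiplicities of these coordinate primes. Once that identity is in hand, the remaining steps are direct translations through the established lemmas, and the equivalence follows from the elementary observation that a sum of positive integers equals $1$ only when a single term is present and equals $1$.
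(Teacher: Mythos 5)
Your proposal is correct and follows essentially the same route as the paper: both reduce the degree gap to the codimension-$2$ primary components of $J_Q$ killed by saturation, identify them via \cref{lem:J-contains-line} as the coordinate primes $\langle y_i,y_j\rangle$, translate multiplicity one (i.e.\ $\frq=\frp$) into $\{i,j\}$-simplicity via the equivalence (1)$\iff$(5) of \cref{bigdegiff}, and use perfect balance to convert non-association of the complementary prime into collinearity of $G\setminus(Q_i\cup Q_j)$. Your write-up merely makes the additivity-of-degree bookkeeping more explicit than the paper's compressed phrasing, but the argument is the same.
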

\begin{proof}
Recall that $\deg I_\cL=\deg J_Q$. We see that $\deg I_Q=\deg J_Q-1$ if and only if there is exactly one primary component $\frq$ of $J_Q$ for which $\codim\frq=2$ and $y_1y_2y_3y_4\in\frp \coloneqq \sqrt{\frq}$, and furthermore $\deg\frq=1$. By \cref{lem:J-contains-line}, this is equivalent to having $\frp = \langle y_i,y_j\rangle$ and $G\setminus(Q_i\cup Q_j)$ contained in a closed half-space passing through the origin for some $\{i,j\}$. Without loss of generality take $\{i,j\}=\{2,4\}$. The condition $\deg \frq = 1$ is equivalent to $\frq = \frp$, which by \cref{bigdegiff} is equivalent to $G$ being $\{2,4\}$-simple. The condition that $Q_{1}\cup Q_{3}$ is contained in a closed half-space is equivalent to $Q_{1}\cup Q_{3}$ being contained in a line, since $\sum_{\bb \in Q_{1}\cup Q_{3}}\bb = 0$. 
\end{proof}

\subsection{When equality holds in \eqref{ineqA}}
\label{sec:reg=reg}

In this subsection, we use \cref{cor:syzygy-degrees} to describe the syzygy quadrangles (and their corresponding total degrees) of $I_Q$ in terms of those of $I_\cL$, which allows us to determine when equality holds in \eqref{ineqA}.

Fix a reduction datum $(\cL, G, Q)$.
\begin{proposition}
\label{newsyzquadunitsquare}
Suppose that $(\cL, G, Q)$ is perfectly balanced, and that for either $\{i,j\}=\{1,3\}$ or $\{i,j\}=\{2,4\}$, the following two properties hold:
\begin{itemize}
    \item the vectors in $G\setminus (Q_i\cup Q_j)$ all lie on a single line passing through the origin, and
    \item $(\cL,G, Q)$ is $\{i,j\}$-simple in the sense of \cref{def:simple}.
\end{itemize}
Then the following are true:
\begin{enumerate}
    \item Each syzygy quadrangle of $I_\cL$ is a syzygy quadrangle of $I_Q$, with the same total degree.
    \item There exists exactly one syzygy quadrangle $P_G$ of $I_{Q}$ that is not a syzygy quadrangle of $I_\cL$.
    \item The total degree of $P_{G}$ is at least the total degree of the unit square. Equality holds if and only if up to dihedral symmetries, $G_Q$ is of one of the forms in \eqref{reg=degGales}.
    \item If up to dihedral symmetries, $G_Q$ is of one of the forms in \eqref{reg=degGales}, then the unit square is a syzygy quadrangle of $I_\cL$ that attains the regularity of $I_\cL$.
\end{enumerate}
\end{proposition}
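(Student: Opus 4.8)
The plan is to normalize the reduction datum, translate the four claims into the language of supporting functionals via \cref{prop:PS-syz-quads}, and then compare the ray configurations of $G$ and $G_Q$. By the stated symmetry I would assume $\{i,j\}=\{2,4\}$ and that the datum is $2$-simple, writing $\bv_0,\bw_0$ for the nonzero vectors of $Q_2$ with $\det(\bv_0,\bw_0)=1$. Setting $\mathbf{s}\coloneqq\bv_0+\bw_0$ (which is primitive, since $\det(\bv_0,\bv_0+\bw_0)=1$ forces any common factor to divide $1$) and $\bq\coloneqq\bb_1'=\sum_{\bb\in Q_1}\bb$, the hypotheses give $Q_1\cup Q_3\subseteq\ell$ for a single line $\ell$ through the origin and $G_Q=\{\bq,\mathbf{s},-\bq,-\mathbf{s}\}$, so that $G_Q$ lies on two lines; the two forms of $2$-simplicity ($Q_4=\{-\bv_0,-\bw_0\}$ or $Q_4=\{-\mathbf{s}\}$) are handled uniformly, and the $4$-simple and $\{1,3\}$ cases follow by symmetry. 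Throughout I would use that a primitive $[\bv,\bw]$ is a syzygy quadrangle precisely when the four open cones cut out by $\bv^\perp,\bw^\perp$ each contain a Gale ray, and that by \cref{cor:syzygy-degrees} its total degree equals $\sum_{\bb}\max_{\bu\in[\bv,\bw]}\bb\cdot\bu$ summed over the diagram. Write $\Sigma_\cL$ and $\Sigma_Q$ for the sets of syzygy quadrangles of $I_\cL$ and $I_Q$.

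For (1) the key lemma I would isolate is: \emph{for every $[\bv,\bw]\in\Sigma_\cL$, the vectors $\bv_0$ and $\bw_0$ support a common vertex.} I would prove this by the trichotomy ``same vertex / adjacent vertices / opposite vertices.'' The adjacent case is impossible because if $\bv_0,\bw_0$ supported two vertices sharing an edge then, working in the unimodular basis $(\bv_0,\bw_0)$, a short sign computation forces $\abs{\det(\bv,\bw)}\ge 2$. The opposite case is ruled out using the line $\ell$: if $\bv_0,\bw_0$ supported opposite vertices, then $\pm\bv_0,\pm\bw_0$ occupy only two antipodal cones, so the two remaining cones would have to be filled by $Q_1\cup Q_3$; but the cone supporting $\bv+\bw$ contains both $\bv_0$ (closed second quadrant) and $-\bw_0$ (closed fourth quadrant) as directions, and a convex cone of angle $<\pi$ joining the second to the fourth quadrant sweeps out all of the first or all of the third quadrant, hence contains the ray of $\ell$ through $\bq$. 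Thus $\bq,-\bq$ lie in the two antipodal cones already occupied by $\pm\bv_0,\pm\bw_0$ and cannot fill the remaining two, a contradiction. Granting the lemma, $\mathbf{s}$ lies in the cone spanned by $\bv_0,\bw_0$ and so strictly supports the common vertex while $-\mathbf{s}$ supports the opposite one, and $\pm\bq$ support the remaining two exactly as for $I_\cL$; hence $[\bv,\bw]\in\Sigma_Q$. Equality of total degrees follows because within each block $Q_k$ all vectors now attain their maximum at one vertex, so $\max_\bu \bb_k'\cdot\bu=\sum_{\bb\in Q_k}\max_\bu\bb\cdot\bu$.

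For (2), I would show that $\Sigma_Q\setminus\Sigma_\cL$ is exactly the set of parallelograms in which $\bv_0,\bw_0$ support opposite vertices: these are precisely those where the vertex supported by $\mathbf{s}$ is supported by neither $\bv_0$ nor $\bw_0$, so the merge $\{\bv_0,\bw_0\}\mapsto\mathbf{s}$ creates them, and by the argument above they cannot lie in $\Sigma_\cL$. Uniqueness of $P_G$ comes from the two-line structure of $G_Q$: once $\bv_0,\bw_0$ support opposite vertices, the only rays available to fill the two uncovered cones are $\pm\mathbf{s}$, which forces $\mathbf{s}$ and $\bq$ into adjacent cones; this, together with $\det(\bv_0,\bw_0)=1$ and primitivity, pins down $[\bv,\bw]$ uniquely. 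For (3), I would compute both total degrees from \cref{cor:syzygy-degrees}: the unit square has degree $\lVert\bq\rVert_1+\lVert\mathbf{s}\rVert_1$, and after writing $P_G$ explicitly one verifies $\deg P_G\ge\lVert\bq\rVert_1+\lVert\mathbf{s}\rVert_1$ by an elementary estimate, the slack measuring the failure of $G_Q$ to be a normal form. The equality case is then identified with \eqref{reg=degGales} by invoking \cref{lem:deg=a+b} together with \cite{PS}*{Remark 7.9}, which say that $\reg I_Q=\deg I_Q$ — equivalently, that the unit square attains $\reg I_Q$ — exactly for the diagrams in \eqref{reg=degGales}.

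Finally, (4) is immediate from (1)--(3): if $G_Q$ is a normal form then \cref{lem:deg=a+b} makes the unit square attain $\reg I_Q$, so by the equality in (3) both $P_G$ and the unit square realize the maximal total degree in $\Sigma_Q$; since $\Sigma_\cL=\Sigma_Q\setminus\{P_G\}$ with total degrees preserved and the unit square lies in $\Sigma_\cL$, the unit square still has maximal degree among syzygy quadrangles of $I_\cL$ and hence attains $\reg I_\cL$. The main obstacle is the key lemma in (1) — that $\bv_0,\bw_0$ always support a common vertex — since it is what makes the entire comparison go through and requires combining the unimodularity obstruction with the rigidity imposed by confining $Q_1\cup Q_3$ to a single line; the uniqueness in (2) and the explicit identification and degree computation of $P_G$ in (3) are the remaining technical points, but these become routine once the cone picture is in place.
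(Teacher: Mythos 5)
Your strategy coincides with the paper's: both arguments run entirely on \cref{prop:PS-syz-quads} and \cref{cor:syzygy-degrees}, the key lemma that the two nonzero vectors of $Q_i$ support a common vertex of every syzygy quadrangle of $I_\cL$ is exactly what the paper's proof of (1) establishes, the new quadrangle $P_G$ is identified as the $90^\circ$ rotation of $[\bv_0,\bw_0]$, and (4) is deduced from (1)--(3) together with \cref{lem:deg=a+b} in the same way. One structural remark on (1): the paper normalizes so that $\bv,\bw$ lie in a common closed quadrant, after which the supporting cone of the relevant vertex sits inside a single open quadrant; the ``opposite vertices'' configuration is then excluded for free by the quadrant constraints on $Q_2$, and only your unimodularity computation for the adjacent case is needed. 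The line hypothesis on $Q_1\cup Q_3$ plays no role in (1). Your cone-sweeping argument for the opposite case can be repaired, but as written the assertion that the cone supporting $\bv+\bw$ ``contains both $\bv_0$ and $-\bw_0$'' conflicts with your standing assumption that $\bv_0$ supports a different vertex; you mean the cone $C_A$ containing both of them, not the cone of $\bv+\bw$.

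The one step that would fail as described is the forward direction of the equality statement in (3). Knowing only that the total degree of $P_G$ equals that of the unit square yields $\reg I_Q=\reg I_\cL$, but gives no control on $\deg I_Q$, so \cite{PS}*{Remark 7.9} (which presupposes $\reg I_Q=\deg I_Q$) cannot be invoked to conclude that $G_Q$ has one of the forms in \eqref{reg=degGales}; \cref{lem:deg=a+b} only supplies the converse implication. The paper instead reads the equality case directly off the degree computation: with $\bb,\bc$ the nonzero vectors of $Q_1$ and $(-a_1,a_2)$ the sum over $Q_2$, the difference of the two total degrees is $(a_1-1)(b_2+c_2-1)+(a_2-1)(b_1+c_1-1)$, and the vanishing of this expression is what matches \eqref{reg=degGales}. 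This computation, together with the determinant argument that pins down the matrix $\left[\begin{smallmatrix}\bb\cdot\bv&\bb\cdot\bw\\ \bc\cdot\bv&\bc\cdot\bw\end{smallmatrix}\right]=\left[\begin{smallmatrix}0&*\\ *&0\end{smallmatrix}\right]$ and hence the uniqueness of $P_G$ in (2), is the substantive content of the paper's proof; your proposal correctly locates these as the remaining technical points but does not carry them out, and the shortcut you propose in their place for (3) does not close the gap.
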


\begin{proof}[Proof of \cref{newsyzquadunitsquare}(1)]
Let $[\bv,\bw]$ be an arbitrary syzygy quadrangle of $I_\cL$, where as in the proof of \cref{lem:eqdeg}, we may assume that $\bv,\bw$ either both lie in the first closed quadrant or both lie in the second closed quadrant, and that $\det(\bv,\bw)=1$. Without loss of generality, we assume that $\bv,\bw$ both lie in the first closed quadrant; the argument when they both lie in the second closed quadrant is similar. If $\{i,j\}=\{1,3\}$, then all the vectors in $Q_2\cup Q_4$ lie on a line passing through the origin, so it follows from \cref{lem:preserve-syz-quad-and-degree} that $[\bv,\bw]$ is also a syzygy quadrangle with respect to $I_{Q}$, with the same total degree.

Now, suppose that $\{i,j\}=\{2,4\}$, so $G$ is $\{2,4\}$-simple, and either $Q_2$ or $Q_4$ contains exactly two nonzero vectors. Without loss of generality, suppose that $Q_2$ contains exactly two nonzero vectors, which we denote $\bb$ and $\bc$, so that $\bb_2'=\bb+\bc$ and the set of nonzero vectors in $Q_4$ equals either $\{-\bb,-\bc\}$ or $\{-\bb-\bc\}$. Since $[\bv,\bw]$ is a syzygy quadrangle of $I_\cL$, by \cref{prop:PS-syz-quads}, either $-\bb\cdot\bv,\bb\cdot\bw>0$ or $-\bc\cdot\bv,\bc\cdot\bw>0$. Without loss of generality, suppose $-\bb\cdot\bv,\bb\cdot\bw>0$. By \cref{lem:preserve-syz-quad-and-degree}, it suffices to show that
\[Q_2\subseteq\{\bu\in\RR^2\mid\text{$\bu\cdot\bv\le 0$ and $\bu\cdot\bw\ge 0$}\},\qquad Q_4\subseteq\{\bu\in\RR^2\mid\text{$\bu\cdot\bv\ge 0$ and $\bu\cdot\bw\le 0$}\}.\]
Assume otherwise, so that either $\bc\cdot\bv>0$ or $\bc\cdot\bw<0$. If $\bc\cdot\bv>0$, then we must also have $\bc\cdot\bw>0$. However, this contradicts that $\det(\bv,\bw)=\abs{\det(\bb,\bc)}=1$. Similarly, if $\bc\cdot\bw<0$, then $\bc\cdot\bv<0$, so again we obtain a contradiction.
\end{proof}

\begin{proof}[Proof of \cref{newsyzquadunitsquare}(2)]
Let $\ell\in\{i,j\}$ be such that $Q_\ell$ contains exactly two nonzero vectors, which are denoted $\bv$ and $\bw$. Let $\bv',\bw'\in\ZZ^2$ be the images of $\bv,\bw$ when rotated $\pi/2$ radians counterclockwise about the origin, respectively. Let $P_{G}$ denote the primitive parallelogram $[\bv',\bw']$, considered up to lattice translations. Note that if $Q_i$ and $Q_j$ each contains exactly two nonzero vectors, the parallelogram $P_{G}$ does not depend on whether $\ell=i$ or $\ell=j$.

Without loss of generality, suppose $\{i,j\}=\{1,3\}$, and that $Q_1$ contains exactly two nonzero vectors $\bb,\bc$, where $\abs{\det(\bb,\bc)}=1$. Let $[\bv,\bw]$ be an arbitrary syzygy quadrangle of $I_Q$, where $\bv,\bw$ either both lie in the first closed quadrant or both lie in the second closed quadrant, and $\det(\bv,\bw)=1$. Suppose $[\bv,\bw]$ is not a syzygy quadrangle of $I_\cL$. If $\bv,\bw$ both lie in the first closed quadrant, then it follows from \cref{prop:PS-syz-quads} that $[\bv,\bw]$ is also a syzygy quadrangle of $I_\cL$, a contradiction. Thus, $\bv,\bw$ both lie in the second closed quadrant. Since $[\bv,\bw]$ is a syzygy quadrangle of $I_Q$, we must have $(\bb+\bc)\cdot\bv>0$ and $(\bb+\bc)\cdot\bw<0$, and since $[\bv,\bw]$ is not a syzygy quadrangle of $I_\cL$, we have that either $\bb\cdot\bv\le 0$ or $\bb\cdot\bw\ge 0$, and similarly for $\bc$. This implies that either $\bb\cdot\bv\le 0$ and $\bc\cdot\bw\ge 0$, or $\bb\cdot\bw\ge 0$ and $\bc\cdot\bv\le 0$. Without loss of generality, suppose $\bb\cdot\bv\le 0$ and $\bc\cdot\bw\ge 0$.

Writing $\bb,\bc,\bv,\bw$ as $2\times 1$ column vectors, let
\[\begin{bmatrix}\bb^T\\\bc^T\end{bmatrix}\begin{bmatrix}\bv&\bw\end{bmatrix}=A=\begin{bmatrix}a_{11}&a_{12}\\a_{21}&a_{22}\end{bmatrix}\in\ZZ^{2\times 2}.\]
We have $\abs{\det A}=1$, and $-a_{11},a_{22}\ge 0$ and $a_{11}+a_{21},-a_{12}-a_{22}\ge 1$. Thus,
\begin{align*}
\pm 1=\det A&=a_{11}a_{22}-a_{12}a_{21}\\
&=a_{11}a_{22}+((-a_{12}-a_{22})+a_{22})((a_{11}+a_{21})-a_{11})\\
&=(-a_{12}-a_{22})(a_{11}+a_{21})+(-a_{11})(-a_{12}-a_{22})+a_{22}(a_{11}+a_{21})\\
&\ge 1+0+0.
\end{align*}
Thus, we have $a_{11}+a_{21}=-a_{12}-a_{22}=1$ and $a_{11}=a_{22}=0$. Letting $\bb',\bc'$ denote the images of $\bb,\bc$ after a rotation of $\pi/2$ about the origin, it follows that $\bb'=\bv$ and $\bc'=\bw$. Thus, $[\bv,\bw]=[\bb',\bc']=P_{G}$. Conversely, it is easy to see that $P_{G}$ is a syzygy quadrangle of $I_Q$ but not of $I_\cL$.
\end{proof}

\begin{proof}[Proof of \cref{newsyzquadunitsquare}(3)]
Without loss of generality, suppose $\{i,j\}=\{1,3\}$, and that $Q_1$ contains exactly two nonzero vectors $\bb,\bc$, where $\det(\bb,\bc)=1$. Let $\bb=(b_1,b_2)$ and $\bc=(c_1,c_2)$. Furthermore, let the sum of the vectors in $Q_2$ equal $(-a_1,a_2)$, where $a_1,a_2\ge 1$. Using \cref{cor:syzygy-degrees}, it is easy to see that for $I_Q$, the total degree corresponding to the unit square equals $a_1+a_2+b_1+b_2+c_1+c_2$. Furthermore, again by \cref{cor:syzygy-degrees}, we see that the total degree corresponding to the syzygy quadrangle $P_{G}=[(-b_2,b_1),(-c_2,c_1)]$ of $I_Q$ equals
\[(\bb+\bc)\cdot(-b_2,b_1)+(-a_1,a_2)\cdot(-b_2-c_2,b_1+c_1)+(-\bb-\bc)\cdot(-c_2,c_1)+(a_1,-a_2)\cdot 0,\]
which equals $-b_2c_1+b_1c_2+a_1(b_2+c_2)+a_2(b_1+c_1)+b_1c_2-b_2c_1=a_1(b_2+c_2)+a_2(b_1+c_1)+2$. The desired inequality is
\[a_1+a_2+(b_1+c_1)+(b_2+c_2)\le a_1(b_2+c_2)+a_2(b_1+c_1)+2,\]
which can be rearranged as
\[0\le (a_1-1)(b_2+c_2-1)+(a_2-1)(b_1+c_1-1).\]
This inequality holds since either $\bb$ or $\bc$ lies in the first open quadrant. Equality holds if and only if $a_1=1$ or $b_2+c_2=1$, and $a_2=1$ or $b_1+c_1=1$, as desired.
\end{proof}

\begin{proof}[Proof of \cref{newsyzquadunitsquare}(4)]
By part (3), the total degree corresponding to the syzygy quadrangle $P_{G}$ of $I_Q$ equals the total degree corresponding to the unit square. Combining this fact with parts (1) and (2) of \cref{newsyzquadunitsquare}, we see that $\reg I_\cL=\reg I_Q$, since from \cite{PS}, we know that the regularity of $I_\cL$ equals $-2$ plus the maximum total degree corresponding to a syzygy quadrangle of $I_\cL$, and similarly for $I_Q$. By \cref{lem:deg=a+b}, the unit square is a syzygy quadrangle of $I_Q$ that attains the regularity of $I_Q$. Since the unit square is a syzygy quadrangle of both $I_\cL$ and $I_Q$, with the same total degree, the unit square also attains the regularity of $I_\cL$.
\end{proof}

For our last result, we assume that the unit square is a syzygy quadrangle of $I_\cL$ that attains the regularity of $I_\cL$. Strictly speaking, it is possible to circumvent \cref{prop:same-reg-iff} in the proof of \cref{prop:non-cm-main-result}, but we include \cref{prop:same-reg-iff} anyway since it fits in naturally with the flow of our results.

\begin{proposition}\label{prop:same-reg-iff}
Suppose $(\cL, G, Q)$ satisfies the hypotheses of \cref{newsyzquadunitsquare}, and furthermore that
\begin{itemize}
    \item the unit square is a syzygy quadrangle attaining the regularity of $I_\cL$, and
    \item $\deg I_Q = \deg I_\cL - 1$.
\end{itemize}
Then, $\reg I_{Q}=\reg I_\cL$ if and only if up to dihedral symmetries, $G_Q$ is of one of the forms in \eqref{reg=degGales}.
\end{proposition}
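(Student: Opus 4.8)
The plan is to compute $\reg I_Q$ directly from the syzygy quadrangles of $I_Q$ and to compare it with $\reg I_\cL$, invoking the structural description of these quadrangles already obtained in \cref{newsyzquadunitsquare}. The essential input is the formula from \cite{PS} (used, for instance, in the proof of \cref{newsyzquadunitsquare}(4)) that for a non-Cohen--Macaulay lattice ideal of codimension $2$ the regularity equals $-2$ plus the maximum total degree of a syzygy quadrangle. Since $I_\cL$ is non-Cohen--Macaulay by the definition of a reduction datum, and $I_Q$ is a non-Cohen--Macaulay codimension $2$ lattice ideal as well (it has the unit square as a syzygy quadrangle by \cref{newsyzquadunitsquare}(1), hence a minimal third syzygy and projective dimension $3$), this formula applies to both.

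First I would record that, because the unit square is a syzygy quadrangle attaining the regularity of $I_\cL$, the maximum total degree among syzygy quadrangles of $I_\cL$ equals the total degree of the unit square, namely $\reg I_\cL + 2$. Next, combining \cref{newsyzquadunitsquare}(1) and (2), the syzygy quadrangles of $I_Q$ are exactly those of $I_\cL$ (each carrying the same total degree) together with the single extra quadrangle $P_G$. Hence the maximum total degree of a syzygy quadrangle of $I_Q$ equals the larger of $\reg I_\cL + 2$ and the total degree of $P_G$, and so $\reg I_Q + 2$ equals this maximum.

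The decisive step is \cref{newsyzquadunitsquare}(3), which asserts that the total degree of $P_G$ is at least the total degree of the unit square, i.e.\ at least $\reg I_\cL + 2$. This resolves the maximum: $\reg I_Q + 2$ equals the total degree of $P_G$. Therefore $\reg I_Q = \reg I_\cL$ holds if and only if the total degree of $P_G$ equals $\reg I_\cL + 2$, that is, equals the total degree of the unit square. By the equality criterion in \cref{newsyzquadunitsquare}(3), this occurs precisely when $G_Q$ is, up to dihedral symmetries, one of the forms in \eqref{reg=degGales}, which is exactly the assertion. I do not expect a genuine obstacle here: once \cref{newsyzquadunitsquare} is available the argument is pure bookkeeping with the regularity formula. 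The only points deserving care are the verification that $I_Q$ is non-Cohen--Macaulay, so that the formula applies, and that \cref{newsyzquadunitsquare}(2) genuinely enumerates \emph{all} syzygy quadrangles of $I_Q$, so that no larger total degree is overlooked in taking the maximum.
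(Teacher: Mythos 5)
Your proposal is correct and follows essentially the same route as the paper: the paper's own (terser) proof likewise deduces from \cref{newsyzquadunitsquare} that $P_G$ is a syzygy quadrangle of $I_Q$ attaining $\reg I_Q$, and then reads off the equivalence from the equality criterion in part (3). Your additional checks (that $I_Q$ is non--Cohen--Macaulay so the regularity formula applies, and that part (2) accounts for all syzygy quadrangles of $I_Q$) are exactly the bookkeeping the paper leaves implicit.
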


\begin{proof}
Since $G$ is chosen so that the unit square is a syzygy quadrangle attaining the regularity of $I_\cL$, it follows from \cref{newsyzquadunitsquare} that $P_{G}$ is a syzygy quadrangle of $I_Q$ attaining the regularity (of $I_Q$), and that $\reg I_Q=\reg I_\cL$ if and only if up to dihedral symmetries, $G_Q$ is of one of the desired forms.
\end{proof}

\section{Non-Cohen--Macaulay lattice ideals of codimension 2: Proof of \cref{thm:main}}
\label{sec:finalproof}

In this section we specialize the results of \cref{sec:analyzingreduction} to the case of toric ideals with maximal regularity to prove \cref{thm:main}.

\begin{proposition}\label{prop:preliminary-non-cm-main-result}
Suppose that $\cL$ is saturated with Gale diagram containing at least $5$ nonzero vectors. The toric ideal $I_\cL$ has maximal regularity if and only if there exists a reduction datum $(\cL, G, Q)$, satisfying the following properties for either $\{i,j\}=\{1,3\}$ or $\{i,j\}=\{2,4\}$:
\begin{itemize}
    \item the vectors in $G\setminus(Q_i\cup Q_j)$ all lie on a single line passing through the origin,
    \item $(\cL, G, Q)$ is $\{i,j\}$-simple, and
    \item up to dihedral symmetries, the set $G_Q$ is of one of the forms in \eqref{reg=degGales}.
\end{itemize}
\end{proposition}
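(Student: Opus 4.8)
The plan is to read the three listed conditions as the precise combinatorial signature of the assertion that in the chain \eqref{reductionchain} the unit gap $\deg I_\cL-\reg I_\cL$ sits at inequality \eqref{ineqC}, and to prove both directions by translating between this signature and the numerical equalities $\reg I_\cL=\reg I_Q=\deg I_Q=\deg I_\cL-1$. Since a reduction datum exists only when $I_\cL$ is not Cohen--Macaulay, this is automatic in the ``if'' direction; in the ``only if'' direction we use the standing hypothesis of the section that $I_\cL$ is not Cohen--Macaulay (the Cohen--Macaulay case being covered by \cref{prop:codim-2-CMnonCI-list}), together with the fact that $\cL$ is saturated. I would also record at the outset the elementary fact that the Gale vectors of any $\cL\perp(1,\dots,1)$ sum to $0$: the sum of all Gale vectors is the pair of column sums of a basis matrix $B$, and each column lies in $\cL$, hence has coordinate sum $0$.

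For the ``if'' direction, first note that $\{i,j\}$-simplicity forces $\sum_{\bb\in Q_i\cup Q_j}\bb=0$ by \cref{def:simple}, so by the sum-zero fact $\sum_{\bb\in Q_k\cup Q_l}\bb=0$ as well and the datum is perfectly balanced. Then \cref{prop:deg-1-iff}, applied with the first two bullets, gives $\deg I_Q=\deg I_\cL-1$; \cref{newsyzquadunitsquare}(4), whose hypotheses are exactly perfect balance and the first two bullets and whose input is the third bullet, gives $\reg I_\cL=\reg I_Q$ (and that the unit square attains the regularity); and \cref{lem:deg=a+b} gives $\reg I_Q=\deg I_Q$. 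Chaining these yields $\reg I_\cL=\reg I_Q=\deg I_Q=\deg I_\cL-1$, i.e.\ maximal regularity.

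For the ``only if'' direction, assume $\reg I_\cL=\deg I_\cL-1$. By the corollary following \cref{lem:deg=a+b} there is a perfectly balanced reduction datum $(\cL,G,Q)$ whose unit square attains the regularity, and by \cref{cor:eqreg} we get $\reg I_Q=\reg I_\cL=\deg I_\cL-1$, so the gap is not at \eqref{ineqA}. Thus either $\deg I_Q=\deg I_\cL-1$ (gap at \eqref{ineqC}) or $\deg I_Q=\deg I_\cL$ (gap at \eqref{ineqB}). In the first case $\reg I_Q=\deg I_Q$, whereupon \cref{prop:deg-1-iff} supplies the first two bullet properties and \cite{PS}*{Remark 7.9} (valid since the unit square is a syzygy quadrangle of $I_Q$ attaining its regularity) supplies the third; then $(\cL,G,Q)$ is the desired datum.

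The crux is to rule out the second case $\deg I_Q=\deg I_\cL$, and this is where I expect the main obstacle to lie. Here perfect balance is decisive: by \cref{lem:halfspaces} each of $Q_1\cup Q_3$ and $Q_2\cup Q_4$ lies in a closed half-plane through the origin, and since each sums to $0$, every vector in it must lie on the bounding line; hence $G$ lies on two lines through the origin. I would then prove the key auxiliary fact that a \emph{saturated} lattice whose Gale diagram lies on two lines is a complete intersection, contradicting non-Cohen--Macaulayness: writing the two primitive line-directions as the rows of a $2\times 2$ integer matrix $M$, the condition $\QQ\cL\cap\ZZ^n=\cL$ forces the gcd of the multipliers along each line to be $1$ and $\lvert\det M\rvert=1$, so the directions form a $\ZZ$-basis; sending them to the coordinate axes makes $G$ imbalanced, so $I_\cL$ is a complete intersection by \cref{lem:ci-imbalanced}. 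This exclusion is exactly where saturation enters decisively, and it lets us bypass the repartitioning of \cref{prop:regdeg} (needed only in the non-saturated setting); everything else is bookkeeping with the inequality chain \eqref{reductionchain}.
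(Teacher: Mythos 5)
Your ``if'' direction is essentially the paper's own argument (perfect balance from simplicity, \cref{prop:deg-1-iff} for $\deg I_Q=\deg I_\cL-1$, \cref{lem:deg=a+b}, and \cref{newsyzquadunitsquare}(4) together with \cref{prop:same-reg-iff} to pull the regularity back to $I_\cL$), and your self-contained argument that a saturated lattice whose Gale diagram lies on two lines yields a complete intersection is correct --- it is in effect a proof of the fact the paper imports as \cite{PS}*{Proposition 7.10}. The gap is at the first step of the ``only if'' direction: you launch the argument from the unnumbered corollary following \cref{lem:deg=a+b}, which promises a \emph{perfectly balanced} reduction datum whose unit square attains the regularity. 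That corollary is stated in the paper without proof, and as stated it is false. Take the monomial curve of \cref{cor:toric-n=4-iff} with $d=4$ (the smooth rational quartic in $\PP^3$), with Gale diagram $\{(1,0),(-1,1),(-1,-3),(1,2)\}$: it is toric, non-Cohen--Macaulay, and satisfies $\reg I_\cL=\deg I_\cL-1=3$, yet no two of its Gale vectors sum to $0$. Since pairwise sums transform equivariantly under $\GL_2(\ZZ)$, and any reduction datum on four nonzero vectors must place exactly one vector in each $Q_i$, no perfectly balanced reduction datum exists for this lattice; up to $\GL_2(\ZZ)$ its diagram is exactly the exceptional diagram of \cref{lem:galediagrams} with $a=d=2$.

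Under the hypotheses of the present proposition ($\cL$ saturated, at least five nonzero Gale vectors) the corollary does hold, but proving it requires precisely the repartitioning you claim to bypass. Even in this setting an initial partition can produce the exceptional form: for the saturated lattice with Gale diagram $G=\{(1,1),(0,1),(-1,1),(-1,-1),(1,-2)\}$ (an $n'=5$ case of \cref{thm:main}, with the unit square attaining $\reg I_\cL=3$), the partition with $Q_1=\{(1,1),(0,1)\}$ gives $G_Q=\{(1,2),(-1,1),(-1,-1),(1,-2)\}$, which is not perfectly balanced and has $\deg I_Q=\deg I_\cL=4$, so equality in \eqref{ineqB} fails by one and your half-plane argument cannot conclude that $G$ lies on two lines (it does not). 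The repair --- moving $(0,1)$ from $Q_1$ to $Q_2$ --- is the content of \cref{prop:regdeg}. So your parenthetical claim that the repartitioning is ``needed only in the non-saturated setting'' conflates two different issues: saturation rules out two-line Gale diagrams (that is what both you and the paper use it for), while the repartitioning is what produces a datum achieving equality in \eqref{ineqB}, and your proof has silently delegated the latter to an unproven and, in general, false statement. The fix is to follow the paper's route: start from any datum with the unit square attaining the regularity, apply \cref{cor:eqreg}, use your two-lines-implies-complete-intersection argument to verify hypothesis (5) of \cref{prop:regdeg}, and let \cref{prop:regdeg} supply a datum with $\reg I_Q=\deg I_Q$; perfect balance then follows from \cite{PS}*{Remark 7.9}, since both diagrams in \eqref{reg=degGales} are perfectly balanced, and the rest of your Case 1 goes through verbatim.
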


\begin{proof}
As mentioned in \cref{sec:preliminaries}, it suffices to prove the statement under the assumption that all the Gale vectors are nonzero, so we assume that $G$ consists only of nonzero vectors. First, suppose that $I_\cL$ has maximal regularity. Since $I_\cL$ is not Cohen--Macaulay, and is therefore not a complete intersection, and $I_\cL$ is toric, \cite{PS}*{Proposition 7.10} implies that $G$ does not lie on two lines. Applying \cref{cor:eqreg,prop:regdeg}, we conclude that we can choose a reduction datum $(\cL, G, Q)$ in such a way that $\reg I_\cL=\reg I_Q=\deg I_Q=\deg I_\cL-1$. Then the unit square is a syzygy quadrangle of $I_Q$ that attains the regularity of $I_Q$, so \cite{PS}*{Remark 7.9} implies that $G_Q$ is of one of the forms in \eqref{reg=degGales}. In particular, this means that $(\cL,G,Q)$ is perfectly balanced. Then, the remaining desired properties of $G$ follow from \cref{prop:deg-1-iff}.

Now, suppose that there exists a reduction datum that satisfies the given properties. Per \eqref{reductionchain}, we have $\reg I_\cL\le\reg I_Q\le\deg I_Q\le\deg I_\cL$. By \cref{lem:deg=a+b}, we have $\reg I_Q=\deg I_Q$, and the unit square is a syzygy quadrangle of $I_Q$ that attains the regularity of $I_Q$. The third condition implies that $(\cL,G,Q)$ is perfectly balanced, so by \cref{prop:deg-1-iff} and the first two conditions, $\deg I_Q=\deg I_\cL-1$. Finally, $\reg I_\cL=\reg I_Q$ by \cref{prop:same-reg-iff}. Another way to see that $\reg I_\cL=\reg I_Q$ is to use the fact the unit square is a syzygy quadrangle of $I_\cL$ that attains the regularity of $I_\cL$, and a syzygy quadrangle of $I_Q$ that attains the regularity of $I_Q$ but has the same corresponding total degree with respect to $I_\cL$ and $I_Q$.
\end{proof}

For the next result, we use \cref{newsyzquadunitsquare} to reformulate the previous proposition in a way that does not require the underlying Gale diagram to be chosen so that the unit square attains the regularity. In particular, it also tells us that if $I_\cL$ is not Cohen--Macaulay and has maximal regularity, then $\cL$ has at most $6$ nonzero Gale vectors.

\begin{proposition}\label{prop:non-cm-main-result}
Suppose that $\cL$ is saturated, has rank $2$, and the Gale diagram of $\cL$ contains at least $5$ nonzero vectors. Then, $I_\cL$ is a non-Cohen--Macaulay ideal of maximal regularity if and only if there exists a $\{2,4\}$-simple reduction datum $(\cL, G, Q)$ such that $Q_1=\{(1,1)\}$ and $Q_3=\{(-1,-1)\}$.
\end{proposition}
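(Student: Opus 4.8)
The plan is to deduce this proposition from \cref{prop:preliminary-non-cm-main-result} by normalizing the three conditions appearing there into the single condition stated here. As in the proof of \cref{prop:preliminary-non-cm-main-result}, I may assume every Gale vector of $\cL$ is nonzero. I will use freely that applying a dihedral symmetry of the unit square (an element of $\GL_2(\ZZ)$ stabilizing the unit square and permuting the four quadrants) carries a reduction datum to a reduction datum, and the forms in \eqref{reg=degGales} to forms in \eqref{reg=degGales}.

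For the ``if'' direction, let $(\cL,G,Q)$ be a $\{2,4\}$-simple reduction datum with $Q_1=\{(1,1)\}$ and $Q_3=\{(-1,-1)\}$; then $I_\cL$ is not Cohen--Macaulay by the definition of a reduction datum. I would check the hypotheses of \cref{prop:preliminary-non-cm-main-result} with $\{i,j\}=\{2,4\}$: the set $G\setminus(Q_2\cup Q_4)=\{(1,1),(-1,-1)\}$ lies on the diagonal, and $\{2,4\}$-simplicity is assumed, so only the claim that $G_Q$ has a form from \eqref{reg=degGales} requires comment. Writing $\bv,\bw$ for the two nonzero vectors of the active quadrant, with $\abs{\det(\bv,\bw)}=1$, one has $\{\bb_2',\bb_4'\}=\{\bv+\bw,-(\bv+\bw)\}$; since two vectors of one closed quadrant cannot sum to a nonzero vector on a coordinate axis unless they are collinear, $\bv+\bw$ lies in an open quadrant, and so $G_Q=\{(1,1),(-a,b),(-1,-1),(a,-b)\}$ with $a,b\ge 1$, the first form in \eqref{reg=degGales}. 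Then \cref{prop:preliminary-non-cm-main-result} yields that $I_\cL$ has maximal regularity.

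For the ``only if'' direction, suppose $I_\cL$ is non-Cohen--Macaulay of maximal regularity. By \cref{prop:preliminary-non-cm-main-result} there is a reduction datum with the three properties for some $\{i,j\}$, and after a $90^\circ$ rotation I may take $\{i,j\}=\{2,4\}$, so that $Q_1\cup Q_3$ lies on a single line through the origin and the datum is $\{2,4\}$-simple. Reading off the relevant form in \eqref{reg=degGales}, the non-simple component $\bb_1'$ is primitive (its line is the positive-slope one, the simple line $\langle\bb_2'\rangle$ being negative-slope); since $Q_1$ consists of nonzero lattice points on a single ray from the origin, $Q_1=\{\bu\}$ and $Q_3=\{-\bu\}$ are singletons, where $\bu\coloneqq\bb_1'$ is primitive and lies in the open first quadrant (a Gale vector there being forced by the unit square's being a syzygy quadrangle). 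If $\bu=(1,1)$ we are done; otherwise I would apply $M\in\GL_2(\ZZ)$ sending $\bu$ to $(1,1)$ and repartition the image of the simple structure into the second and fourth quadrants.

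This repartitioning is the crux and the main obstacle. Writing the active vectors as $\bv,\bw$ with $\abs{\det(\bv,\bw)}=1$, so the simple structure is $\{\bv,\bw\}$ together with either $\{-\bv,-\bw\}$ or $\{-\bv-\bw\}$, the task is to produce a lattice basis $\bf_1,\bf_2$ with $\bf_1+\bf_2=\bu$ that places these vectors into the closed second and fourth quadrants of the frame in which $\bu=(1,1)$. Here the hypotheses enter essentially: unimodularity of $(\bv,\bw)$ provides a convenient basis in which to carry out the construction; the inequality $\bu\ne\pm(\bv+\bw)$, which holds because the two directions occurring in a form of \eqref{reg=degGales} are distinct, keeps $-\bv-\bw$ off the diagonal line $\langle\bu\rangle$; and the non-Cohen--Macaulay hypothesis---equivalently, that the resulting Gale diagram still meets all four open quadrants---rules out the degenerate placements (which turn out to be precisely the Cohen--Macaulay configurations). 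I expect this to reduce to a short case analysis on the coordinates of $\bu$ in the basis $(\bv,\bw)$, exhibiting $\bf_1,\bf_2$ explicitly (essentially as the Stern--Brocot parents of $\bu$), and this is where the real work of the proof lies. Finally, since the resulting $Q_2\cup Q_4$ then carries at most four nonzero vectors and $Q_1,Q_3$ are singletons, $\cL$ has at most six nonzero Gale vectors.
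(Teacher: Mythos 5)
Your ``if'' direction is sound and essentially matches the paper's: verify the three hypotheses of \cref{prop:preliminary-non-cm-main-result} for $\{i,j\}=\{2,4\}$, with the observation that $\bb_2'=\bv+\bw$ cannot land on a coordinate axis. The ``only if'' direction, however, has a genuine gap, and it is exactly the step you flag as ``the crux and the main obstacle'': you never actually carry out the repartitioning. Having reduced to $Q_1=\{\bu\}$, $Q_3=\{-\bu\}$ with $\bu$ primitive, you propose to apply an arbitrary $M\in\GL_2(\ZZ)$ sending $\bu$ to $(1,1)$ and then redistribute the images of the simple vectors into the closed second and fourth quadrants, but you only ``expect this to reduce to a short case analysis'' via Stern--Brocot parents. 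As written, this is a plan, not a proof. The paper avoids the problem entirely by using the explicit shape of $G_Q$ guaranteed by \eqref{reg=degGales}: in the first form $\bb_1'$ is already $(1,1)$ and nothing need be done, while in the second form $\bb_1'=(1,a)$ and one applies the shear $\left[\begin{smallmatrix}1&0\\1-a&1\end{smallmatrix}\right]$, which sends $(1,a)\mapsto(1,1)$ and maps each of the closed second and fourth quadrants into itself (since $1-a\le 0$). Hence the original partition transports verbatim and no repartitioning is needed. If you insist on an arbitrary primitive $\bu$, you are proving something more general than necessary and must supply the case analysis you deferred.

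A secondary issue: your parenthetical justification that $\bb_1'$ is primitive (``its line is the positive-slope one, the simple line $\langle\bb_2'\rangle$ being negative-slope'') does not cover all dihedral positionings of the first form in \eqref{reg=degGales}. A $90^\circ$ rotation of $\{(1,1),(a,-b),(-1,-1),(-a,b)\}$ places $\pm(b,a)$ in quadrants $1$ and $3$ and $\pm(1,1)$ in quadrants $2$ and $4$, so that $\bb_1'=(b,a)$ need not be primitive. The claim is still true, but to see it you must rule this positioning out: if $\bb_2'=(-1,1)$, then $Q_2$ cannot contain two nonzero vectors of the closed second quadrant, at least one in the open quadrant, summing to $(-1,1)$ (and likewise for $Q_4$), contradicting $\{2,4\}$-simplicity. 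Adding this argument, and replacing the deferred repartition by the paper's shear, would complete your proof along essentially the paper's lines.
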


\begin{proof}
Suppose first that such a datum exists. It then follows from \cref{lem:ci-imbalanced,prop:non-cm-open-quadrants} that $I_\cL$ is not Cohen--Macaulay. Furthermore, by \cref{newsyzquadunitsquare}, the unit square is a syzygy quadrangle of $I_\cL$ that attains the regularity of $I_\cL$. So the desired result follows from \cref{prop:preliminary-non-cm-main-result}.

For the other direction, suppose $I_\cL$ is non-Cohen--Macaulay with maximal regularity. Then there is a reduction datum $(\cL,G,Q)$ satisfying the conditions in \cref{prop:preliminary-non-cm-main-result} and that $I_\cL$ has maximal regularity. Suppose first that $G_Q$ equals $\{(1,1),(a,-b),(-1,-1),(-a,b)\}$ up to dihedral symmetries for some $a,b>0$; after applying a suitable dihedral symmetry (which preserves the fact that the unit square is a syzygy quadrangle attaining the regularity, as well as the conditions in \cref{prop:preliminary-non-cm-main-result}), we may suppose that $G_Q=\{(1,1),(a,-b),(-1,-1),(-a,b)\}$. Since $G$ intersects each open quadrant, $Q_1$ must consist of $(1,1)$ and some number of zero vectors, and $Q_3$ must consist of $(-1,-1)$ and some number of zero vectors. Thus, $G$ is not $\{1,3\}$-simple, so $\{i,j\}=\{2,4\}$ and $G$ is $\{2,4\}$-simple. Moving all zero vectors from $Q_1$ and $Q_3$ to either $Q_2$ or $Q_4$, we obtain a partition $Q'$ for which $(\cL,G,Q')$ has the desired properties.

Now, suppose that $G_Q$ equals $\{(1,a),(1,-b),(-1,-a),(-1,b)\}$ up to dihedral symmetries for some $a,b>0$; again, we may assume that $G_Q=\{(1,a),(1,-b),(-1,-a),(-1,b)\}$. After reflecting if necessary, we may also suppose $\{i,j\}=\{2,4\}$. This forces $Q_1$ to consist of $(1,a)$ and some number of zero vectors, and $Q_3$ to consist of $(-1,-a)$ and some number of zero vectors. We can move all the zero vectors to $Q_2$ or $Q_4$. We now multiply every vector in $G$ by the matrix
\[\begin{bmatrix}1&0\\-a+1&1\end{bmatrix}\in\GL_2(\ZZ)\]
to obtain a Gale diagram $G'$ of $\cL$. For all $1\le m\le 4$, let $Q_m'$ denote the set of images of the Gale vectors in $Q_m\subseteq G$, so that $G=Q_1'\cup Q_2'\cup Q_3'\cup Q_4'$, where $Q_1'=\{(1,1)\}$, and $Q_3'=\{(-1,-1)\}$. Note that each $Q_m'$ is contained in the $m$th closed quadrant and contains a vector in the $m$th open quadrant. Using the fact that $(\cL,G,Q)$ is $\{2,4\}$-simple, it is straightforward to check that $(\cL,G',Q')$ is also $\{2,4\}$-simple. The result follows.
\end{proof}

We are finally able to give a proof of \cref{thm:main}.

\begin{proof}[Proof of \cref{thm:main}]
The case when $I_\cL$ is a complete intersection is covered by the saturated lattices given in \cref{ci-lattice-ideals} (up to permutations), as discussed in \cref{cor:finitely-many-ci-lattice-ideals}. So, we suppose from now on that $I_\cL$ is not a complete intersection.

If $n'=3$, then $I_\cL$ is Cohen--Macaulay by \cref{prop:non-cm-open-quadrants}, but by \cref{prop:codim-2-CMnonCI-list}, there are no such saturated $\cL$ that give $\reg I_\cL=\deg I_\cL-1$.

If $n'=4$, we are done by \cref{cor:toric-n=4-iff}.

Suppose $n'=5$. By combining \cref{prop:codim-2-CMnonCI-list} and \cref{prop:non-cm-main-result}, we see that $I_\cL$ has maximal regularity if and only if there exists a Gale diagram $G$ of $\cL$ of the form
\[\{(-1,1),(1,-1),\bv,\bw,-\bv-\bw\},\]
where $\bv,\bw$ are vectors in the closed first quadrant such that $\abs{\det(\bv,\bw)}=1$. Moreover, for all such Gale diagrams, the corresponding lattice ideal is toric and not a complete intersection. If $\bu$ is an arbitrary visible lattice point (i.e., a lattice point with relatively prime coordinates) in the open second quadrant, there exists some $M\in\GL_2(\ZZ)$ with nonnegative entries such that $M\bu=\left[\begin{smallmatrix}-1\\1\end{smallmatrix}\right]$. It follows that $I_\cL$ has maximal regularity if and only if there exists a Gale diagram $G$ of the form
\[\{\bu,-\bu,(1,0),(0,1),(-1,-1)\},\]
where $\bu$ is a visible lattice point in the open second quadrant. After applying additional $\GL_2(\ZZ)$ transforms (which may permute $(1,0),(0,1),(-1,-1)$), we may only require $\bu$ to be a visible lattice point not lying on the coordinate axes or on the line $y=x$. The result follows.

The case where $n'=6$ follows by an argument similar to the one for $n'=5$. Finally, by \cref{prop:codim-2-CMnonCI-list} and \cref{prop:non-cm-main-result}, the assumption that $I_\cL$ is not a complete intersection means that equality is never achieved if $n'>6$.
\end{proof}

\section{Future work}\label{sec:future-work}
The methods of \cref{subsec:codim-2-CMnonCI} gave a complete classification of the lattices $\cL$ of rank $2$ for which $I_\cL$ is a Cohen--Macaulay lattice ideal such that $\reg I_\cL=\deg I_\cL-\codim I_\cL+1$. We believe with some more work, a similar classification could be obtained for lattices of higher rank. Empirical evidence suggests the following.
\begin{conjecture}
Suppose $\operatorname{char}\Bbbk=0$, and let $I_\cL$ be a nondegenerate Cohen--Macaulay lattice ideal such that $\reg I_\cL=\deg I_\cL-\codim I_\cL+1$. If $\codim I_\cL\ge 2$, then all minimal generators of $I_\cL$ have degree $2$.
\end{conjecture}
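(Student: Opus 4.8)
The plan is to reduce the statement, via \cref{prop:general-CM-iff}, to excluding a single exceptional configuration, and then to exploit the lattice structure to eliminate it. Write $c=\codim I_\cL$ and $d=\reg I_\cL$. Since $I_\cL$ is nondegenerate we have $I_\cL\subseteq\langle x_1,\dots,x_n\rangle^2$, so \cref{prop:general-CM-iff} applies: maximal regularity forces the number $n_2$ of quadric generators to be $\binom{c+1}{2}$ or $\binom{c+1}{2}-1$. In the first case every minimal generator already has degree $2$, and in the second case either every generator has degree $2$ (with $\reg I_\cL=3$) or there is \emph{exactly one} minimal generator $g$ of degree $d\ge 3$. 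Thus the whole content of the conjecture is to rule out this last possibility for lattice ideals. Equivalently, letting $J\subseteq I_\cL$ be the ideal generated by the quadric binomials, we must prove $J=I_\cL$. Note that any argument must genuinely use the lattice hypothesis, since \cref{prop:general-CM-iff} shows that \emph{general} Cohen--Macaulay ideals do realize the excluded case.

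Next I would pass to the $\Gamma$-grading, $\Gamma=\ZZ^n/\cL$, and its combinatorial dictionary. A minimal generator of $I_\cL$ in multidegree $C$ is a minimal first syzygy of $S/I_\cL$, so by \cite{PS}*{Theorem 3.4} it corresponds to a multidegree $C$ for which $\Delta_C$ is disconnected and $P_C$ is primitive. The hypothetical generator $g$ would then produce a multidegree $C^\ast$ of total degree $d\ge 3$ with $\Delta_{C^\ast}$ disconnected, while the $\binom{c+1}{2}-1$ quadrics produce the largest possible family of disconnected $\Delta_C$ in total degree $2$. In this language the equality $J=I_\cL$ becomes the assertion that any two monomials of multidegree $C^\ast$ are joined by a chain of quadric moves; so the goal is to show that this near-maximal supply of degree-$2$ disconnections forces $\Delta_{C^\ast}$ to be connected for every $C^\ast$ of total degree at least $3$.

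For the core step I see two plausible routes. The first is an inductive reduction in the spirit of \cref{sec:PSrecall}: one would try to isolate a rank-$2$ sublattice or quotient of $\cL$ that simultaneously inherits a full complement of quadrics and the offending high-degree generator, and then invoke \cref{prop:codim-2-CMnonCI-iff}, which shows directly that a Cohen--Macaulay lattice ideal of codimension $2$ with maximal regularity is minimally generated by three quadrics and so has no generator of degree $\ge 3$. The second route is numerical: in the maximal-regularity case \cref{prop:general-CM-iff} pins down the Hilbert series of $S/I_\cL$ to have numerator $1+ct+t^2+\cdots+t^{d-1}$, whence $\deg I_\cL=c+d-1$; one would then combine the Boij--Söderberg decomposition of the Betti table with a lattice-specific lower bound for $\deg I_\cL$, sharper than the general multiplicity bound, to show that a shift of $d$ in homological degree $1$ would force $\deg I_\cL>c+d-1$, a contradiction.

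I expect the main obstacle to be twofold. First, the reduction machinery of \cref{sec:PSrecall} is built specifically for rank-$2$ lattices, where Gale diagrams lie in the plane and \cref{thm:reduction} is available; promoting it to arbitrary rank while preserving both the quadric count and the high-degree generator is not routine, and no Hilbert--Burch structure is available to compensate in higher codimension. Second, and more fundamentally, the lattice hypothesis must enter in an essential way, precisely because general Cohen--Macaulay ideals attain the excluded configuration. Any successful argument must therefore locate the combinatorial feature special to lattice fibers—presumably the connectivity of high-degree fibers under quadric moves once $\binom{c+1}{2}-1$ quadrics are present—that is simply unavailable to arbitrary Cohen--Macaulay ideals. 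Establishing this connectivity, rather than the surrounding bookkeeping, is what I expect to be the crux.
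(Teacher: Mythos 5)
The statement you are attempting is not proven in the paper at all: it appears in \cref{sec:future-work} as a conjecture, supported only by empirical evidence and by verification in three special cases --- complete intersections (\cref{prop:general-ci-ideals}), codimension-$2$ Cohen--Macaulay non-complete-intersections (\cref{prop:codim-2-CMnonCI-iff}), and Gorenstein ideals of codimension $3$ (\cref{rmk:codim-3-gorenstein}). So there is no proof in the paper to compare yours against, and the relevant question is whether your proposal closes the gap the authors left open. It does not. Your first step is sound and matches the paper's own framing: nondegeneracy gives $I_\cL\subseteq\langle x_1,\dots,x_n\rangle^2$, so \cref{prop:general-CM-iff} applies, and the conjecture reduces exactly to excluding the case $n_2=\binom{c+1}{2}-1$ with a unique minimal generator of degree $\reg I_\cL\ge 3$. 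Your observation that the lattice hypothesis must enter essentially is also correct (monomial ideals such as $\langle x_1^2,x_1x_2,x_2^d\rangle$ realize the excluded configuration, so no argument valid for all Cohen--Macaulay ideals can work).

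The genuine gap is that everything after this reduction is a plan, not an argument, as you yourself concede. Your first route --- reducing to a rank-$2$ sublattice or quotient and invoking \cref{prop:codim-2-CMnonCI-iff} --- founders on exactly the obstacle you name: the machinery of \cref{sec:PSrecall} (Gale diagrams in the plane, syzygy quadrangles, the Hilbert--Burch structure behind \cref{lem:cm}) exists only for rank $2$, and you give no construction of a reduction that preserves both Cohen--Macaulayness and the maximal-regularity equality in higher rank, nor any reason such a reduction should carry the hypothetical high-degree generator down to the codimension-$2$ quotient. Your second route is likewise unsubstantiated: you invoke a ``lattice-specific lower bound for $\deg I_\cL$, sharper than the general multiplicity bound,'' but no such bound is stated or proved, and producing one is essentially equivalent to the conjecture itself (the general bound of \cref{lem:CMdegbound} cannot suffice, precisely because the monomial examples above satisfy it with equality in the excluded configuration). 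The crux you correctly identify --- that $\binom{c+1}{2}-1$ quadric fibers force connectivity of every fiber of total degree at least $3$ under quadric moves --- is stated but never established, so the proposal leaves the conjecture exactly as open as the paper does.
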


This says that the case of \cref{prop:general-CM-iff} with a generator of degree $\reg I \ge 3$ does not occur in the case of lattice ideals. As in \cref{cor:finitely-many-ci-lattice-ideals} and \cref{prop:codim-2-CMnonCI-list}, this would imply that up to the inclusion of nonzero Gale vectors and permuting coordinates, there are only finitely many possibilities for $\cL$. In fact, we have the following stronger conjecture.

\begin{conjecture}
Suppose $\operatorname{char}\Bbbk=0$, and let $I_\cL$ be a nondegenerate Cohen--Macaulay lattice ideal such that $\reg I_\cL=\deg I_\cL-\codim I_\cL+1$. If $\reg I_\cL\ge 3$ and $r=\codim I_\cL\ge 2$, then the minimal free resolution of $I_\cL$ is pure, with the form
  \[
  0\to S(-r-2)^1\to S(-r)^{\frac{(r-1)1}{r+1}\binom{r+2}{r}}\to\cdots
   \to S(-3)^{\frac{2(r-2)}{r+1}\binom{r+2}{3}}\to S(-2)^{\frac{1(r-1)}{r+1}\binom{r+2}{2}}\to I\to 0.
   \]
\end{conjecture}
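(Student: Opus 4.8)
The plan is to reduce the conjecture to the single statement that $S/I_\cL$ is Gorenstein, and then to obtain the displayed resolution by homological algebra. In fact the two conditions are equivalent. The displayed resolution ends in $S(-r-2)^1$, so its truth forces $S/I_\cL$ to be Gorenstein. Conversely, suppose $S/I_\cL$ is Gorenstein; since $r\ge 2$ its socle degree satisfies $s\ge 2$. Passing to an artinian reduction, the $h$-vector $(h_0,\dots,h_s)$ is symmetric with $h_0=h_s=1$, $h_1=h_{s-1}=r$, and $h_i\ge 1$ throughout (the reduction is generated in degree $1$). Maximal regularity reads $s+1=\reg I_\cL=\deg I_\cL-r+1=\big(\sum_i h_i\big)-r+1$, i.e. $\sum_i h_i=r+s$; but for $s\ge 3$ one has $\sum_i h_i\ge 2r+s-1$, which would force $r\le 1$. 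Hence $s=2$ and $h=(1,r,1)$, so $\reg(S/I_\cL)=2$ and the Betti table is concentrated in the rows $j-i\in\{0,1,2\}$.

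Now Gorenstein self-duality $\beta_{i,j}=\beta_{r-i,(r+2)-j}$ carries the second row to the zeroth: $\beta_{i,i+2}=\beta_{r-i,r-i}$, which vanishes unless $i=r$, giving $\beta_{r,r+2}=\beta_{0,0}=1$ and annihilating the rest of the second row, while $\beta_{r,r+1}=\beta_{0,1}=0$. Since the alternating sums $\sum_i(-1)^i\beta_{i,j}$ are the coefficients of $(1+rt+t^2)(1-t)^r$, with the second row pinned down the first row is forced to take exactly the values in the statement. Thus it remains only to prove Gorenstein-ness; in particular this subsumes the weaker conjecture that all generators have degree $2$, because a Gorenstein algebra with $h=(1,r,1)$ is the apolar algebra of a nondegenerate quadric and so is generated by quadrics, excluding the exceptional high-degree generator permitted by \cref{prop:general-CM-iff}.

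The crucial point is that Gorenstein-ness must be extracted from the lattice structure, via the fact that $I_\cL$ contains no monomial, equivalently that the image of every variable is a nonzerodivisor on $S/I_\cL$. This hypothesis is indispensable: the artinian algebra $\Bbbk[x,y,z]/\langle x^2,y^2,z^2,xz,yz\rangle$ has $h=(1,3,1)$, codimension $3$, and attains $\reg=\deg-\codim+1=3$, yet it is not Gorenstein (its socle is $\langle z,xy\rangle$) and has a non-pure resolution with $\beta_{3,4}\ne 0\ne\beta_{3,5}$. It is not a lattice ideal precisely because it contains the monomials $x^2,y^2,z^2$. So maximal regularity alone cannot force purity, and any proof must use the absence of monomials to rule out a stray degree-$1$ socle element in the artinian reduction, equivalently to force the dual socle quadric to have full rank $r$.

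To prove Gorenstein-ness I would pursue two complementary routes. When $\cL$ is saturated, $S/I_\cL$ is a semigroup ring and Gorenstein-ness is the symmetry of the underlying affine semigroup; once $\reg I_\cL=3$ is known this identifies $X_\cL$ as an arithmetically Cohen--Macaulay toric variety of almost minimal degree $r+2$, a del Pezzo variety, which is arithmetically Gorenstein, so the task becomes deriving the semigroup symmetry directly. For general (possibly nonsaturated) $\cL$ I would instead analyze the graded canonical module $\omega_{S/I_\cL}$ and argue that a nonprincipal $\omega$ — equivalently a degree-$1$ socle in the artinian reduction — produces a monomial in $I_\cL$, contradicting the nonzerodivisor property. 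The hard part, and the reason this is only a conjecture, is that the rank-$2$ toolkit of this paper (Gale diagrams, syzygy quadrangles, and the Hilbert--Burch/MNR degree bound of \cref{lem:CMdegbound} used in \cref{prop:codim-2-CMnonCI-iff}) has no direct counterpart for $r\ge 3$: there is no planar Gale diagram, no Hilbert--Burch structure, and—as the counterexample shows—the general multiplicity bound does not by itself force purity. Overcoming this will likely require a higher-rank analogue of the syzygy-polygon machinery, with primitive lattice polytopes in $\RR^r$ playing the role of syzygy quadrangles, or a new combinatorial argument showing that the no-monomial condition is incompatible with the extra socle of a non-Gorenstein $(1,r,1)$ algebra.
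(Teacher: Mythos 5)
The statement you are proving is stated in the paper only as a conjecture; the paper offers no proof of it, merely noting that it is confirmed for complete intersections (\cref{prop:general-ci-ideals}), for non-complete-intersection lattice ideals of codimension $2$ (\cref{prop:codim-2-CMnonCI-iff}), and for Gorenstein ideals of codimension $3$ (\cref{rmk:codim-3-gorenstein}). Measured against that, your write-up is a genuine contribution but not a proof: everything hinges on the claim that $S/I_\cL$ is Gorenstein, and that claim is exactly where your argument stops. The two ``routes'' you propose (semigroup symmetry in the saturated case; a canonical-module/socle analysis producing a monomial in $I_\cL$ in general) are plausible directions, but neither is carried out, and you concede as much. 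So the central step is missing, and the proposal cannot be accepted as a proof of the conjecture.

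That said, the parts you do prove are correct and worth recording. The equivalence of the conjecture with Gorenstein-ness is right: the displayed resolution has last term of rank $1$, forcing type $1$; conversely, granting Gorenstein-ness, the symmetry $h_1=h_{s-1}=r$ of the artinian reduction collides with the Hilbert series $1+rt+\sum_{i=2}^{\reg I_\cL-1}t^i$ already computed in \cref{prop:general-CM-iff} (or with your direct count $\sum_i h_i=r+s$) to force socle degree $s=2$ and $h=(1,r,1)$, after which self-duality $\beta_{i,j}=\beta_{r-i,(r+2)-j}$ pins down rows $0$ and $2$ of the Betti table and the $K$-polynomial $(1+rt+t^2)(1-t)^r$ determines row $1$, yielding exactly the displayed ranks. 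Your example $\Bbbk[x,y,z]/\langle x^2,y^2,z^2,xz,yz\rangle$ is also a useful sanity check: it is artinian with $h=(1,3,1)$, attains $\reg I=\deg I-\codim I+1=3$, yet has socle in degrees $1$ and $2$ and a non-pure last syzygy module, so the no-monomial (lattice) hypothesis really is indispensable and cannot be discarded in any eventual proof. I would suggest reframing your submission as a reduction of the paper's second conjecture to the statement ``$S/I_\cL$ Gorenstein,'' which also subsumes the paper's first conjecture (generation in degree $2$); as a proof of the conjecture itself it has an unfilled gap at its core.
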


Note that these conjectures are confirmed for complete intersections by \cref{prop:general-ci-ideals}, for ideals of codimension $2$ that are not complete intersections by \cref{prop:codim-2-CMnonCI-iff}, and for Gorenstein ideals of codimension $3$ by \cref{rmk:codim-3-gorenstein}.

%%%%%%%%%%%%%%%%%%%%%%%%%%%%%%%%%%%%%%%%%%%%%%%%%%%
%%%%%%%%%%%%%%%%%%%%%%%%%%%%%%%%%%%%%%%%%%%%%%%%%%%
%%%%%%%%%%%%%%%%%%%%%%%%%%%%%%%%%%%%%%%%%%%%%%%%%%%

\begin{bibdiv}
\begin{biblist}

\bib{DMM}{article}{
    author={Dickenstein, A.},
    author={Matusevich, L. F.},
    author={Miller, E.},
    title={Combinatorics of binomial primary decomposition},
    journal={Math. Z.},
    volume={264},
    year={2010},
    pages={745-763},
}

\bib{EG}{article}{
   author={Eisenbud, D.},
   author={Goto, S.},
   title={Linear free resolutions and minimal multiplicity},
   journal={J. Algebra},
   volume={88},
   date={1984},
   number={1},
   pages={89--133},
}

\bib{M2}{article}{
        author = {Grayson, Daniel R.},
        author = {Stillman, Michael E.},
        title = {Macaulay2, a software system for research in algebraic geometry},
        journal = {Available at \url{http://www.math.uiuc.edu/Macaulay2/}}
}

\bib{Green}{article}{
author={Green, M.},
   title={Generic initial ideals},
   journal={in J. Elias, J.M. Giral, M. Mir\'o Roig, and S. Zarzuela (eds.) \textit{Six Lectures on Commutative Algebra}},
   date={1998},
   pages={119-185},
}

\bib{GLP}{article}{
   author={Gruson, L.},
   author={Lazarsfeld, R.},
   author={Peskine, C.},
   title={On a theorem of Castelnuovo, and the equations defining space
   curves},
   journal={Invent. Math.},
   volume={72},
   date={1983},
   number={3},
   pages={491--506},
}

 \bib{HH}{article}{
   author={Herzog, J.},
   author={Hibi, T.},
   title={Castelnuovo--Mumford regularity of simplicial semigroup rings with isolated singularity},
   journal={Proc. Amer. Math. Soc.},
   volume={131},
   date={2003},
   number={9},
   pages={2641--2647},
 }

\bib{HHS}{article}{
   author={Hellus, M.},
   author={Hoa, L. T.},
   author={St\"{u}ckrad, J.},
   title={Castelnuovo-Mumford regularity and the reduction number of some
   monomial curves},
   journal={Proc. Amer. Math. Soc.},
   volume={138},
   date={2010},
   number={1},
   pages={27--35},
}

\bib{HS}{article}{
    author = {Ho\c{s}ten, S.},
    author = {Shapiro, J.},
    title = {Primary decomposition of lattice basis ideals},
    journal = {J. Symb. Comput.},
    volume = {29},
    number = {4},
    pages = {625-639},
    year = {2000}
}

\bib{KW}{article}{
    author = {Kwak, S.},
    year = {1999},
    pages = {21-34},
    title = {Castelnuovo--Mumford regularity bound for smooth threefolds in $\PP^5$},
    volume = {509},
    journal = {J. Reine Angew. Math.}
}

\bib{MNR}{article}{
    author = {Migliore, J.},
    author = {Nagel, U.},
    author = {R{\"o}mer, T.},
    year = {2004},
    pages = {},
    title = {The Multiplicity Conjecture in low codimensions},
    volume = {12},
    journal = {Math. Res. Lett.},
}

\bib{MP}{article}{
   author={McCullough, J.},
   author={Peeva, I.},
   title={Counterexamples to the Eisenbud--Goto regularity conjecture},
   journal={J. Amer. Math. Soc.},
   volume={31},
   date={2018},
   pages={473--496},
}

\bib{nitsche12}{article}{
    author={Nitsche, M. J.},
    title={Castelnuovo--Mumford regularity of seminormal simplicial affine semigroup rings},
    journal={J. Algebra},
    volume={368},
    date={2012},
    pages={345--357}
}

\bib{nitsche14}{article}{
   author={Nitsche, M. J.},
   title={A combinatorial proof of the Eisenbud--Goto conjecture for monomial
   curves and some simplicial semigroup rings},
   journal={J. Algebra},
   volume={397},
   date={2014},
   pages={47--67},
}

\bib{OPVV}{article}{
    author={O'Carroll, L.},
    author={Planas-Vilanova, F.},
    author={Villareal, R. H.},
    title={Degree and algebraic properties of lattice and matrix ideals},
    journal={SIAM J. Discrete Math.},
    volume={28},
    year={2014},
    number={1},
    pages={394-427},
}

\bib{PSti}{article}{
    author={Peeva, I.},
    author={Stillman, M.},
    title={The minimal free resolution of a Borel ideal},
    journal={Expo. Math.},
    volume={26},
    number={3},
    date={2008},
    pages={237--247}
}

\bib{PS}{article}{
   author={Peeva, I.},
   author={Sturmfels, B.},
   title={Syzygies of codimension $2$ lattice ideals},
   journal={Math. Z.},
   volume={229},
   date={1998},
   number={1},
   pages={163--194},
}

\end{biblist}
\end{bibdiv}
%%%%%%%%%%%%%%%%%%%%%%%%%%%%%%%%%%%%%%%%%%%%%%%%%%%

\end{document}